%%%%%%% TEMPLATE for ARTICLE in FCAA Journal since 2022 %%%%%%%%%%%%%
%%                                                                 %%
%% Please do not use \input{...} to include other tex files.       %%
%% Submit your LaTeX manuscript as one .tex document ONLY.         %%
%%                                                                 %%
%% All additional figures and files should be attached             %%
%% separately on Editorial manager site together with              %%
%% auxiliary ones as: svjour3.cls, svglov3.clo, etc.               %%
%%%%%%%%%%%%%%%%%%%%%%% file template-svjour3.tex %%%%%%%%%%%%%%%%%%%

% This is a general template file for the LaTeX package SVJour3
% for Springer journals. Springer Heidelberg 2010/09/16
%
% Copy it to a new file with a new name and use it as the basis
% for your article. Delete % signs as needed.
%
% This template includes a few options for different layouts and
% content for various journals. Please consult a previous issue of
% your journal as needed.
% As for FCAA, this below is adopted specially for our journal ! %%

%%%%%%%%%%%%%%%%%%%%%%%%%%%%%%%%%%%%%%%%%%%%%%%%%%%%%%%%%%%%%%%%%%%
%%%% Template for FCAA article in svjour3-style for Springer  %%%%%

 \RequirePackage{fix-cm}

 \documentclass[smallextended]{svjour3} %% onecolumn (second format) %% use this one for FCAA ! %%
 \smartqed  % flush right qed marks, e.g. at end of proof %

%%% publisher's and journal definitions come here: %%%%%%%%%%%%%%%%%%%%%%%
%%% for FCAA ! %%%%%%%
   % \numberwithin{equation}{section}
\usepackage{amssymb,amsmath,amsfonts,latexsym} %%
\usepackage{hyperref}
   
   %%% similarly you can define other missing statements as Conjecture, etc.
   %%% how to make proofs so square to appear at ends: %%%%%%%%%%%%%%%%%%%%%
  %%
  % same as: \def\qed{\hfill$\Box$} % for \square
 %% may add \smallskip afterwards...

%%%%%%%%%%%%%% additional, if necessary %%%%%%%%%%%%%%%%%%%%%%%%%%%%%%%%%%%%
\usepackage{mathtools}
\usepackage{multirow,array}
\usepackage{graphicx}
\usepackage{subfigure}
\usepackage{epstopdf}
\usepackage{float} %%% to use [H], [ht] etc. to fix places of figures
\usepackage{color, xcolor}
\usepackage{algorithm}
\usepackage{float}
\usepackage{algpseudocode}

\usepackage[nottoc]{tocbibind} %% [nottoc] per togliere "indice" dall'indice

\pagestyle{plain}

\usepackage{booktabs}  % Per migliorare l'estetica della tabella
\usepackage{diagbox}

\usepackage{fancyhdr}
\usepackage[labelfont=bf]{caption}
\usepackage[font=it]{caption}

\usepackage{mathrsfs}

\usepackage{mathtools}
\usepackage{empheq} 
\usepackage{url}

\allowdisplaybreaks

%%%%%% Specific Authors Defs come here: %%%%%%%%%%

\newtheorem{proposizione}{Proposition}

\DeclareMathOperator{\cond}{cond}

%%%%%%%%%%%%%%%%%%%%%%%%%%%%%%%%%%%%%%%%%%%%%%%%

%%%%%%%%%%%%%%% begin make title %%%%%%%%%%%%%%%%%

\journalname{Fract. Calc. Appl. Anal.} %%%%% and FCAA-logo will appear on the right %%

\begin{document}

 %%% Pls. specify the kind of the article:
 %%% This is: %% ORIGINAL PAPER %%%% 
 %%% or: REVIEW PAPER / SHORT PAPER, etc. %%%%%%%%

%%%% Title of article for FCAA %%%%%%%%%%%%%%%%%%%%%%%%
\title{The univariate multinode Shepard method
 for the Caputo fractional derivatives: from
 Approximation to the solution of
 Bagley-Torvik equation}

\titlerunning{The univariate multinode Shepard method
 for the Caputo fractional derivatives}
%% if too long for running head - use the text from 1st line !%

%%%%% authors:
\author{
        Francesco Dell'Accio$^1$ %%% 1 %%% V. Kiryakova %% ORCID ID: 0000-0002-4591-6958 %% pls. add IF available %%
\and
        Filomena Di Tommaso$^1$ %%% 2 %%% S. Kiryakov %% ORCID ID: N/A %%%
 \and
        Ilde Ferrara$^1$
 }

\authorrunning{F. Dell'Accio \and F. Di Tommaso \and I. Ferrara} %%  Short form of author list % if too long for running head

%%% affiliations:
\institute{Filomena Di Tommaso$^{1,*}$
\at
Department of Mathematics and Computer Science - University of Calabria, Via P. Bucci, cubo 30A -- 87036 Rende, Italy \\
\email{filomena.ditommaso@unical.it} $^*$ corresponding author %%% obligatory indicate who is corresp.author
 \and
Francesco Dell'Accio$^{1}$
\at
Department of Mathematics and Computer Science - University of Calabria, Via P. Bucci, cubo 30A -- 87036 Rende, Italy \\
\email{francesco.dellaccio@unical.it}
\and 
Ilde Ferrara$^1$
\at
Department of Mathematics and Computer Science - University of Calabria, Via P. Bucci, cubo 30A -- 87036 Rende, Italy \\
\email{ildeferrara99@gmail.com}
}

%%% Dates:
\date{Received:...  / Revised: ... / Accepted: ......}
% These dates will be entered by the Editor (EiC, V. Kiryakova) or authors/ system %

%%%%% For Production Dept.: variants of COPYRIGHT notice to appear: %%%%%%
% if Open Access option chosen, it is as:
   %% "\copyright The Author(s) 2022" % (or next year..) %
% if No Open Access, the TCA form signed to FCAA journal is used, and it appears:
   %% "\copyright Diogenes Co. Ltd. 2022" %% or 2021, or next year %%%%%
%%%%%%%%%%%%%%%%%%%%%%%%%%%%%%%%%%%%%%%%%%%%%%%%%%%%%%%%%%%%%%%%%%%%%%%%%%

\maketitle

%%%%%%%%%%%%%%%%%%%%%%%%%%%%%%%%%%%%%%%%%%%%%%%%%%%%%%
\begin{abstract}
{In this paper, we approximate the fractional derivative of a given function using the univariate multinode Shepard method through the Gauss-Jacobi quadrature formula. Subsequently, the proposed method is applied to the numerical solution of boundary value problems (BVPs) and initial value problems (IVPs), specifically addressing the Bagley-Torvik equations.
Experimental results confirm the method's effectiveness, particularly in accurately approximating the Bagley-Torvik equation for both BVPs and IVPs.}

%%%%%%%%%% Enter suitable key words and phrases %%% examples:
\keywords{Multinode Shepard operator \and  Caputo's fractional derivatives \and  Bagley Torvik equation \and  Boundary Value problems \and  Initial Value problems}

\subclass{65M70  \and 26A33 	\and  34A08 }
%%% These are examples only. Pls. use MSC 2020 for suitable topic numbers %%%%%%

\end{abstract} %%%%%%%%%%%%%

%%%%%%%% begin papers' body %%%%%%%%%%%%%%%%%%%%%%%%%%%%%

%%%%%%%%%%%% Section 1 %%%%%%%%%%%%%%%%%%%%%%%%%%

\section{Introduction}
Fractional calculus is a very old mathematical concept, dating back to the 17th century, that deals with the integration and differentiation of arbitrary order \cite{Fractional_derivative_BTE,math7050407}. 
In the early stages of the history of integer-order calculus, several mathematicians, including L'Hospital, Leibniz, and others, began to explore the idea of fractional calculus, but the concept did not gain much attention or further development due to the lack of practical applications \cite{Fractional_derivative_book}. In recent decades, however, fractional calculus has experienced significant theoretical and practical advancements \cite{esempi_numerici}, with applications emerging in a wide range of fields such as physics and engineering \cite{Fractional_derivative_BTE}.

The numerical solution of fractional differential equations (FDEs) has become an area of growing interest, as these equations offer a more accurate explanation of many natural physical phenomena, facilitating the modeling of complex dynamic systems \cite{esempi_numerici}. In more recent years, both mathematicians and physicists have made considerable efforts to develop robust and stable numerical and analytical methods for solving fractional differential equations (FDEs) of physical relevance, driven by the increasing number of applications of FDEs \cite{Application_BTE}.
In particular, FDEs are used to characterize complex systems with memory and hereditary properties, such as anomalous diffusion, signal processing, stock market dynamics, vibration theory, electrical circuits, stability analysis, control theory, stochastic analysis, time-delay systems, and bioengineering (see \cite{article} and the references therein). They also have applications in thermodynamics, biophysics, blood flow phenomena, rheology, electrodynamics, electrochemistry, electromagnetism, continuum and statistical mechanics, and dynamical systems \cite{Fractional_derivative_BTE,Fractional_derivative_book,article2,article3}.
One of the main engineering applications of fractional calculus is in the study of the viscoelasticity of materials \cite{Fractional_derivative_legendre}, where the first significant application seems to have been made by Bagley and Torvik, in modeling the motion of a rigid plate immersed in a Newtonian fluid \cite{entropy,article4}.

As well known, fractional derivatives have various definitions, among which the most commonly used in practical applications are the Riemann-Liouville  and the Caputo fractional derivatives. The Riemann–Liouville definition is historically the first, and the theory behind it is now well established; however, there are some difficulties in applying it to real-world problems \cite{DEMIRCI20122754}. To solve these difficulties, Caputo reformulated the traditional Riemann–Liouville definition, allowing for the use of integer-order initial conditions when solving fractional-order differential equations \cite{article2}.
 
The Caputo's fractional derivative of order $\alpha >0$ of the function $f(x)$ is defined as
\begin{equation} \label{Caputo_non_esplicita}
    (D^{\alpha} f)(x)= \frac{1}{\Gamma(m - \alpha)} \int_0^x (x - t)^{m-\alpha-1} f^{(m)}(t) \, dt, \quad x>0,
\end{equation}
with $m-1<\alpha<m$, $m \in \mathbb{N}$ and 
 $\Gamma( \cdot) $ being the Gamma function.

This paper aims to approximate the fractional derivative of a given function by using the univariate multinode Shepard operator \cite{DellAccio2016,DellAccio:2018}. This operator was first introduced to solve global Birkhoff interpolation problems not otherwise solvable, even within appropriate polynomial or rational spaces. The idea is to decompose the initial problem into subproblems, each with a unique polynomial solution, and then to use multinode rational functions, based on the nodes of the decomposition, to obtain a global interpolant.

We apply the proposed fractional derivative approximation to numerically solve, via collocation, boundary value problems (BVPs) and initial value problems (IVPs), which involve the Bagley-Torvik equation \cite{BagleyTorvik1984}.

The paper is organized as follows. In Section \ref{sec:multinode} we introduce some preliminary notations and definitions regarding the multinode Shepard method as well as we compute the first and second order derivative of the multinode functions. In Section \ref{section2} we present the application of the multinode Shepard operator in approximating the fractional derivative of a given function $f$. In Section \ref{sec:Bagley-Torvik} we describe the application of the multinode Shepard operator in numerically solving the Bagley-TOrvik equation both in the case of Boundary Value Problems (BVPs) and Initial Value Problems (IVPs). In Section \ref{sec:nodes} we introduce the set of nodes used for the numerical experiments. Finally, in Section \ref{sec:numerical_experiments} we test the performance and show the accuracy of the proposed method.
\section{The multinode Shepard method}
\label{sec:multinode}
\subsection{Multinode functions}
Let $I$ be a closed interval and $X=\left\{  x_{1},x_{2},\dots,x_{n}\right\} \subset I  $ be a set of pairwise distinct nodes, such that $x_{1}<x_{2}%
<\dots<x_{n}$, $n \in \mathbb{N}$. 
Let us assume that a covering $\mathcal{F}=\{F_{1},F_{2},\dots,F_{s}\}$ of $X$ by non-empty subsets $F_{k}=\{ x_{k_1},...,x_{k_p}\}\subset {X}$ is given.
The multinode functions with respect to the covering $\mathcal{F}$ are
defined by%
\begin{equation}%
\begin{array}
[c]{cc}%
B_{\mu,k}(x)=\frac{\displaystyle \prod\limits_{i=1}^{p}\left\vert {x-x_{k_i}%
}\right\vert ^{-\mu}}{ \displaystyle \sum\limits_{\ell=1}^{s}\prod\limits_{i=1}^{p}\left\vert {x-x_{\ell_i}}\right\vert ^{-\mu}}, \quad k=1,\dots,s,  & x \in I,
\end{array}
\label{multinodebasis}%
\end{equation}
where $\mu>0$ is a parameter related to the differentiability class of $B_{\mu,k}(x)$ \cite{DellAccio:2018}. 
The multinode functions  are non-negative and form a partition of unity, and instead of being cardinal, they satisfy the following properties \cite{DellAccio:2018}:
\begin{itemize}
    \item[(i)] $B_{\mu,k}\left(  x_{j}\right)  =0$, for $\mu>0$;
\item[(ii)] $\sum\limits_{k\in \mathcal{K}_{i}}B_{\mu,k}\left(  x_{i}\right)  =1$, for $\mu>0$;
\item[(iii)] $B_{\mu,k}^{\left(  m \right)  }\left(  x_{j}\right)  =0$, for $\mu>m\geq 1$; 
\item[(iv)] $\sum\limits_{k\in \mathcal{K}_{i}}B_{\mu,k}^{\left(  m \right)  }\left(  x_{i}\right)
=0$, for $\mu>m\geq 1$; 
\end{itemize}
where%
\begin{equation} \label{K_i}
\mathcal{K}_{i}= \{k\in\{1,...,s\}:i\in \{k_1,...,k_p \} \} \neq\emptyset,
\end{equation}
is the set of indices of all subsets of $\mathcal{F}$ that contain $x_{i}$.

\subsection{First and second order derivatives of the multinode functions}
In this Section, we compute the first and second order derivatives of the multinode functions in view of the application of the multinode Shepard operator to the approximation of fractional derivatives.
\begin{proposizione} \label{prop:derivata_prima_Bmuk}
    For $\mu >1$ and even, we have 
    \begin{equation} \label{derivata_prima_B}
    B_{\mu,k}^{\prime}(x)=  \mu B_{\mu,k}(x) \left( \sum_{\ell=1}^{s} B_{\mu,\ell}(x) \sum_{i=1}^{p} \frac{1}{ x-x_{\ell_i}} -\sum_{i=1}^{p} \frac{1}{ x-x_{k_i}}\right) .
\end{equation}
\end{proposizione}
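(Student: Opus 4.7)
The plan is to exploit the fact that $\mu$ is even in order to remove the absolute values, then apply logarithmic differentiation. Since $|x-x_{k_i}|^{-\mu}=(x-x_{k_i})^{-\mu}$ whenever $\mu$ is an even positive integer, I would introduce the shorthand
\begin{equation*}
N_k(x)=\prod_{i=1}^{p}(x-x_{k_i})^{-\mu}, \qquad D(x)=\sum_{\ell=1}^{s}N_\ell(x),
\end{equation*}
so that $B_{\mu,k}(x)=N_k(x)/D(x)$. This reformulation sidesteps the need to differentiate $|x-x_{k_i}|$, which would otherwise create sign-dependent terms; after this rewriting the computation is completely algebraic.

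Next I would compute $N_k'(x)$ by taking logarithms: $\log N_k(x)=-\mu\sum_{i=1}^{p}\log(x-x_{k_i})$ gives immediately
\begin{equation*}
N_k'(x)=-\mu\,N_k(x)\sum_{i=1}^{p}\frac{1}{x-x_{k_i}},
\end{equation*}
and, by linearity, $D'(x)=-\mu\sum_{\ell=1}^{s}N_\ell(x)\sum_{i=1}^{p}\frac{1}{x-x_{\ell_i}}$. Applying the quotient rule to $B_{\mu,k}=N_k/D$ and dividing numerator and denominator by $D$ recovers factors $B_{\mu,k}=N_k/D$ and $B_{\mu,\ell}=N_\ell/D$, which rearrange directly into the claimed identity \eqref{derivata_prima_B}.

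There is no real obstacle here. The only point that warrants attention is the justification for removing the absolute values, and one should note also that the formula makes sense only for $x\notin X$ (at the nodes the individual terms $1/(x-x_{k_i})$ blow up while $B_{\mu,k}$ vanishes, so the right-hand side must be understood in the limit sense consistent with property (iii) of the multinode functions). Everything else reduces to the quotient rule and logarithmic differentiation applied termwise.
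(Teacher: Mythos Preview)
Your proposal is correct and follows essentially the same approach as the paper: drop the absolute values using the evenness of $\mu$, apply logarithmic differentiation to the products $\prod_i(x-x_{\ell_i})^{-\mu}$, and use the quotient rule. The paper additionally multiplies numerator and denominator by an auxiliary factor $|x-x_j|^{\mu}=(x-x_j)^{\mu}$ before differentiating, but the resulting $1/(x-x_j)$ terms cancel at the end, so your more direct formulation is in fact slightly cleaner.
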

\begin{proof}
	Let be $x_j \in X$. If we multiply both the numerator and the denominator of (\ref{multinodebasis}%
	) by $\left\vert x-x_{j}\right\vert ^{\mu}$, we get
	\begin{equation} \label{B_mu_C_l}
		B_{\mu,k}\left(  x\right)  =\frac{C_{k}\left(  x\right)  }{\sum\limits_{\ell=1}%
			^{s}C_{\ell}\left(  x\right)  },
	\end{equation}
	where
	\begin{equation} \label{C_l}
		\begin{array}
			[c]{cc}%
			C_{\ell}\left(  x\right)  =\left\vert x-x_{j}\right\vert ^{\mu}\prod
			\limits_{i=1}^{p} {\left\vert {x-x_{\ell_i}}\right\vert ^{-\mu}}, &
			\ell=1,\dots,s.
		\end{array}
	\end{equation}
    Since $\mu>1$ and even, we can rewrite  \eqref{C_l} as
    \begin{equation} \label{C_k2}
    C_{\ell}(x)= \left( x-x_j\right)^{\mu}D_{\ell}(x),
\end{equation}
where
\begin{equation} \label{D_k}
    D_\ell(x)=\prod_{i=1}^{p}(x-x_{\ell_i})^{-\mu}.
\end{equation}
    By differentiating the right-hand side of \eqref{B_mu_C_l}, we get
    \begin{equation} \label{derivata_prima_B1}
    B_{\mu,k}^{\prime}(x)=\frac{C_{k}^{\prime}(x)\displaystyle \sum_{\ell=1}^{s}C_{\ell}(x)-C_{k}(x)\sum_{\ell=1}^{s}C_{\ell}^{\prime}(x)}{\left( \displaystyle \sum_{\ell=1}^{s}C_{\ell}(x) \right)^2} = \frac{C_{k}^{\prime}(x)}{\displaystyle \sum_{\ell=1}^{s}C_{\ell}(x)} -\frac{\displaystyle \sum_{\ell=1}^{s}C_{\ell}^{\prime}(x)}{\displaystyle \sum_{\ell=1}^{s}C_{\ell}(x)}B_{\mu,k}(x).
\end{equation}
The final step is the computation of $C_{\ell}^{\prime}(x)$ in \eqref{C_k2}, for which we get
\begin{equation} \label{C_k^prime}
    C_\ell^{\prime}(x)=\mu(x-x_j)^{\mu-1}D_\ell(x)+(x-x_j)^{\mu} D_\ell^{\prime}(x).
\end{equation}
To compute $D_\ell^{\prime}(x)$, we notice that, since $\mu$ is even, it is possible to compute the logarithm of both sides of \eqref{D_k}, that is
\begin{equation} \label{Log_D_k}
    \log D_\ell(x)=\log \prod_{i=1}^{p}(x-x_{\ell_i})^{-\mu}= \sum_{i=1}^{p} (-\mu) \log (x-x_{\ell_i}).
\end{equation}
By differentiating both sides of \eqref{Log_D_k}, we get 
\begin{equation*}
    \frac{D_\ell^{\prime}(x)}{D_\ell(x)}=-\mu \sum_{i=1}^{p} \frac{1}{x-x_{\ell_i}}
\end{equation*}
and then 
\begin{equation} \label{D^prime}
    D_\ell^{\prime}(x)=-\mu D_\ell(x) \sum_{i=1}^{p} \frac{1}{x-x_{\ell_i}}.
\end{equation}
By substituting \eqref{D^prime} in \eqref{C_k^prime} and by some computations, we obtain
\begin{equation}
    C_\ell^{\prime}(x)= \mu C_\ell(x) \left( \frac{1}{x-x_j} -\sum_{i=1}^{p} \frac{1}{x-x_{\ell_i}} \right). \label{C^prime_mod}
\end{equation}
Finally, by substituting \eqref{C^prime_mod} in \eqref{derivata_prima_B1} and by the partition of unity property, we get

\begin{equation*}
    \begin{split}
        B_{\mu,k}^{\prime}(x) & = \frac{\displaystyle \mu C_k(x) \left( \frac{1}{x-x_j} -\sum_{i=1}^{p} \frac{1}{x-x_{k_i}}\right) }{\displaystyle\sum_{\ell=1}^{s}C_{\ell}(x)} -\frac{\displaystyle \sum_{\ell=1}^{s}\mu C_{\ell}(x) \left( \frac{1}{x-x_j} -\sum_{i=1}^{p} \frac{1}{x-x_{\ell_i}} \right) }{\displaystyle\sum_{\ell=1}^{s}C_{\ell}(x)}B_{\mu,k}(x) =\\
        &= \mu B_{\mu,k}(x) \left( \sum_{\ell=1}^{s} B_{\mu,\ell}(x) \sum_{i=1}^{p} \frac{1}{x-x_{\ell_i}} -\sum_{i=1}^{p} \frac{1}{x-x_{k_i}}\right). \\
    \end{split}
\end{equation*}

\end{proof}

\begin{proposizione} \label{prop:derivata_seconda_Bmuk}
    For $\mu >2$ and even, we have 
\begin{equation} \label{derivata_seconda_B1}
    \begin{split}
        B_{\mu,k}^{\prime \prime}(x) & = - \mu B_{\mu,k}(x) \left(\sum_{\ell=1}^{s}B_{\mu,\ell}(x) \sum_{i=1}^{p} \frac{1}{(x-x_{\ell_i})^2}- \sum_{i=1}^{p} \frac{1}{(x-x_{k_i})^2} \right) \\
        &  + \mu B_{\mu,k}(x) \sum_{\ell=1}^{s}B_{\mu,\ell}^{\prime}(x)\sum_{i=1}^{p} \frac{1}{x-x_{\ell_i}} \\
        & + \mu B_{\mu,k}^{\prime}(x) \left( \sum_{\ell=1}^{s}B_{\mu,\ell}(x)\sum_{i=1}^{p} \frac{1}{x-x_{\ell_i}} -\sum_{i=1}^{p} \frac{1}{x-x_{k_i}} \right) .\\
    \end{split}
\end{equation}

\end{proposizione}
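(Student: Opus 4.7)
The plan is to obtain the second derivative by a straightforward application of the product rule to the first-derivative formula \eqref{derivata_prima_B} established in Proposition \ref{prop:derivata_prima_Bmuk}. Writing that formula compactly as
\begin{equation*}
B_{\mu,k}^{\prime}(x) = \mu\, B_{\mu,k}(x)\, Q_k(x),
\qquad
Q_k(x) := \sum_{\ell=1}^{s} B_{\mu,\ell}(x) \sum_{i=1}^{p} \frac{1}{x-x_{\ell_i}} - \sum_{i=1}^{p} \frac{1}{x-x_{k_i}},
\end{equation*}
the product rule gives
\begin{equation*}
B_{\mu,k}^{\prime\prime}(x) = \mu\, B_{\mu,k}^{\prime}(x)\, Q_k(x) + \mu\, B_{\mu,k}(x)\, Q_k^{\prime}(x).
\end{equation*}
The first summand is already the third line of \eqref{derivata_seconda_B1} once $Q_k(x)$ is expanded, so the bulk of the work is just computing $Q_k^{\prime}(x)$ term by term.

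For $Q_k^{\prime}(x)$ I would differentiate the outer sum over $\ell$ using the product rule again, and differentiate the simple sums $\sum_i (x-x_{\ell_i})^{-1}$ and $\sum_i (x-x_{k_i})^{-1}$ directly, yielding
\begin{equation*}
Q_k^{\prime}(x) = \sum_{\ell=1}^{s} B_{\mu,\ell}^{\prime}(x) \sum_{i=1}^{p} \frac{1}{x-x_{\ell_i}}
 - \sum_{\ell=1}^{s} B_{\mu,\ell}(x) \sum_{i=1}^{p} \frac{1}{(x-x_{\ell_i})^2}
 + \sum_{i=1}^{p} \frac{1}{(x-x_{k_i})^2}.
\end{equation*}
Substituting this expression back and grouping the two terms that come with squared denominators into a single bracketed expression reproduces line one of \eqref{derivata_seconda_B1}, while the term involving $B_{\mu,\ell}^{\prime}(x)$ reproduces line two. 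The assumption $\mu>2$ and even is used only to guarantee that all the operations make sense pointwise on $I$: it ensures $B_{\mu,k}\in C^2(I)$ (by property (iii) with $m=2$) and, as in the proof of Proposition \ref{prop:derivata_prima_Bmuk}, it allows one to drop the absolute values in \eqref{C_l} so that logarithmic differentiation is legitimate when any auxiliary identity is needed.

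No single step is genuinely difficult; the main obstacle is bookkeeping, i.e.\ keeping track of the three nested indexing variables $k$, $\ell$, $i$ and ensuring that the contribution $+\sum_i (x-x_{k_i})^{-2}$ ends up with the correct sign inside the first bracket of \eqref{derivata_seconda_B1} (it enters with a minus from $-\sum_i(x-x_{k_i})^{-1}$ in $Q_k$, and then an additional minus from $\tfrac{d}{dx}(x-x_{k_i})^{-1}$). Once the signs are right, the stated identity follows by simple rearrangement, with no appeal to the partition-of-unity identity from Proposition \ref{prop:derivata_prima_Bmuk} — although one could alternatively use $\sum_\ell B_{\mu,\ell}^{\prime}(x)=0$ at nodes to check consistency at $x=x_j$.
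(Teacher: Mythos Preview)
Your proposal is correct and in fact more direct than the paper's own argument. The paper does \emph{not} differentiate the final form \eqref{derivata_prima_B} as you do; instead it goes back to the auxiliary quotient representation $B_{\mu,k}=C_k/\sum_\ell C_\ell$, differentiates that to obtain an expression involving $C_\ell''$, computes $C_\ell''$ through $D_\ell''$ via logarithmic differentiation, and only at the end invokes the partition-of-unity property to collapse everything into the $B_{\mu,\ell}$ language. Your route bypasses all of this: since the partition-of-unity simplification has already been absorbed into \eqref{derivata_prima_B}, a single product-rule differentiation of $\mu\,B_{\mu,k}(x)\,Q_k(x)$ followed by a term-by-term differentiation of $Q_k$ yields the three lines of \eqref{derivata_seconda_B1} immediately, with no further appeal to $\sum_\ell B_{\mu,\ell}=1$. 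What the paper's approach buys is uniformity with the proof of Proposition~\ref{prop:derivata_prima_Bmuk} and a template that visibly generalises to the $r$-th derivative formula stated after Proposition~\ref{prop:derivata_seconda_Bmuk}; what your approach buys is economy and transparency for the specific case $r=2$.
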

\begin{proof}
    Let us fix $\mu>2$ and even.
    By the equations \eqref{B_mu_C_l}, \eqref{C_k2} and \eqref{D_k}, by differentiating the right-hand side of \eqref{derivata_prima_B} and by rearranging the resulting terms, we get 
\begin{equation}  \label{derivata_seconda_B}
        B_{\mu,k}^{\prime \prime}(x) = \frac{\displaystyle C_{k}^{\prime \prime}(x)}{\displaystyle \sum_{\ell=1}^{s}C_{\ell}(x)} - \frac{\displaystyle \sum_{\ell=1}^{s}C_{\ell}^{\prime \prime}(x) }{\displaystyle \sum_{\ell=1}^{s}C_{\ell}(x)}B_{\mu k}(x) - 2 \frac{\displaystyle \sum_{\ell=1}^{s}C_{\ell}^{\prime}(x)}{\displaystyle \sum_{\ell=1}^{s}C_{\ell}(x)} B_{\mu k}^{\prime}(x). 
\end{equation}
In line with the proof of Proposition \ref{prop:derivata_prima_Bmuk}, we need to compute the second order derivative of $C_{\ell}(x)$, that is
\begin{equation} \label{C_k^''}
    C_\ell^{\prime \prime}(x)= \mu (\mu-1) (x-x_j)^{\mu-2}D_\ell(x)+2\mu(x-x_j)^{\mu-1}D_\ell^{\prime}(x)+(x-x_j)^{\mu}D_\ell^{\prime \prime}(x)
\end{equation}
and, consequently, the second order derivative of $D_\ell(x)$
\begin{equation} \label{D^second}
    D_\ell^{\prime \prime}(x)= -\mu D_\ell^{\prime}(x) \sum_{i=1}^{p} \frac{1}{x-x_{\ell_i}} +\mu D_\ell(x) \sum_{i=1}^{p} \frac{1}{(x-x_{\ell_i})^2}.
\end{equation}
By substituting \eqref{D^prime} and \eqref{D^second} in \eqref{C_k^''}, after some computation, we obtain
\begin{equation}\label{C^second_mod}
    \begin{split}
        C_\ell^{\prime \prime}(x)= \mu \left( - \frac{C_\ell(x)}{(x-x_j)^2}+ \sum_{i=1}^{p} \frac{ C_\ell(x)}{(x-x_{\ell_i})^2}  + \frac{ C_\ell^{\prime}(x) }{x-x_j} -\sum_{i=1}^{p} \frac{C_\ell^{\prime}(x) }{x-x_{\ell_i}} \right). 
    \end{split}
\end{equation}
Finally, by substituting \eqref{C^second_mod} in \eqref{derivata_seconda_B}, by the partition of unity property and by some computations, we get
\begin{equation*}
    \begin{split}
        B_{\mu,k}^{\prime \prime}(x)   &= - \mu B_{\mu,k}(x) \left(\sum_{\ell=1}^{s}B_{\mu,\ell}(x) \sum_{i=1}^{p} \frac{1}{(x-x_{\ell_i})^2}- \sum_{i=1}^{p} \frac{1}{(x-x_{k_i})^2}  \right) \\
        &+ \mu B_{\mu,k}(x) \sum_{\ell=1}^{s}B_{\mu,\ell}^{\prime}(x)\sum_{i=1}^{p} \frac{1}{x-x_{\ell_i}}  \\
        & + \mu B_{\mu,k}^{\prime}(x) \left(\sum_{\ell=1}^{s}B_{\mu,\ell}(x)\sum_{i=1}^{p} \frac{1}{x-x_{\ell_i}} -\sum_{i=1}^{p} \frac{1}{x-x_{k_i}} \right). \\
    \end{split}
\end{equation*}

\end{proof}
By following an analogous procedure to that of Proposition \ref{prop:derivata_prima_Bmuk} and Proposition \ref{prop:derivata_seconda_Bmuk}, we obtain the general formula
\begin{equation*}
    B_{\mu,k}^{(r)}(x) =\frac{C_{k}^{(r)}(x)} {\displaystyle\sum_{\ell=1}^{s} C_{\ell}(x)} - \sum_{i=0}^{n-1} \binom{r}{i} \left( \frac{ \displaystyle \sum_{\ell=1}^{s} C_{\ell}^{(r-i)}(x)}{\displaystyle \sum_{\ell=1}^{s} C_{\ell}(x)}B_{\mu,k}^{(i)}(x)\right).
\end{equation*}

\subsection{The Multinode Shepard operator}
Let $f: I \rightarrow \mathbb{R}$ be a function of class $C^{m}$ in $I$, $ m \ge 0$, sampled at the point $x_i \in X$, and let us denote such values by $f_i=f(x_i), \ i=1,...,n$.
Moreover, we consider the interpolation polynomials $P_{k}\left[  f\right]  \left(  x\right)  $  of degree $p-1$ on the points in $F_k$, $k=1,\dots,s$,  and we combine them with the multinode functions $B_{\mu,k}(x)$ to get the multinode Shepard operator
\begin{equation}
\mathcal{M}_{\mu}\left[  f\right]  (x)=\sum\limits_{k=1}^{s}B_{\mu,k}(x)P_{k}[f](x), \quad \mu>0 , \, x\in I.
\label{multinode_operator}%
\end{equation}
The multinode Shepard operator is an interpolation operator at the points of $X$ and reproduces polynomials up to the degree $p-1$ \cite[Proposition 4]{DellAccio:2018}.

In view of the following applications, it is convenient to rewrite $\mathcal{M}_{\mu}\left[  f \right] $ as
\begin{equation*}
	\displaystyle \mathcal{M}_{\mu}\left[  f \right](x) = \sum_{k=1}^ {s} \sum_{i=1}^{p} B_{\mu,k}(x)l_{k,i}(x)f_{k_i}, 
\end{equation*} 
where $l_{k,i}(x)$ are the fundamental Lagrange polynomials on the nodes of $F_k$, that is
\begin{equation}
	l_{k,i}(x)=\prod_{\substack{j=1\\ j\neq k_i}}^{p} \frac{x-x_j}{x_{k_i}-x_j},
\end{equation}
satisfying the well-known Kronecker delta property
\begin{align} \label{delta_kroneker}
	l_{k,i}(x_{k_j})= \delta_{ij} =\begin{cases}
		1, \qquad i=j\\
		0,  \qquad \text{otherwise}.
	\end{cases}
\end{align}
Moreover, if we identify the subsets $\mathcal{K}_i$ as in \eqref{K_i}, we can rewrite $\mathcal{M}_{\mu}\left[  f \right]  (x)$ as
\begin{equation} \label{M_riscritto}
	\mathcal{M}_{\mu}\left[  f \right]  (x) = \sum_{i=1}^{n}\sum_{j\in \mathcal{K}_i} B_{\mu,j}(x)l_{j,i}(x) f_{i}.
\end{equation}
After these rearrangements, it is easy to compute the successive derivatives of $\mathcal{M}_{\mu}$ by the formula 
\begin{equation} \label{M_mu_derivata}
	\mathcal{M}_{\mu}^{(m)}\left[  f\right]  (x) = \sum_{i=1}^{n}g_i^{(m)}(x)f_{i}
\end{equation}
where
\begin{equation}
    g_i^{(m)}(x)=\sum_{j\in \mathcal{K}_i}\left(B_{\mu,j}(x)l_{j,i}(x) \right)^{(m)}.
\end{equation}

\section{Approximation of the Caputo fractional derivative} \label{section2}
The goal of this Section is to approximate, for any $x>0$ and $ m-1<\alpha<m$, $m \in \mathbb{N}$, the fractional derivative
\begin{equation}
    (D^{\alpha} f)(x) = \frac{1}{\Gamma(m - \alpha)} \int_0^x (x - t)^{m-\alpha-1} f^{(m)}(t) \, dt, 
    \label{fractional_deriavive_f}
\end{equation}
of a given function $f\in C^{m}\left( I \right)$, by using the univariate multinode Shepard operator and the Gauss-Jacobi quadrature formula. 

To this aim we approximate $f $ with $\mathcal{M}_{\mu}\left[  f \right]$  and by substituting the expression \eqref{M_mu_derivata} in equation \eqref{fractional_deriavive_f}, we get
\begin{equation} \label{fractional_derivative_with_M}
    (D^{\alpha} f )(x) \approx(D^{\alpha} \mathcal{M}_{\mu}\left[  f \right])(x) 
    =\frac{1}{\Gamma(m - \alpha)} \sum_{i=1}^{n} f_{i} \int_0^x (x - t)^{m-\alpha-1} g_i^{(m)}(t) \, dt.
\end{equation}
To compute the integral
\begin{equation*}
    \int_0^x (x - t)^{m-\alpha-1} g_{i}^{(m)}(t) \, dt
\end{equation*}
we use the Gauss-Jacobi quadrature formula of order $N=\lceil \frac{n-m}{2} \rceil$  and we obtain
\begin{equation*}
    \begin{split}
        \int_0^x (x - t)^{m-\alpha-1} g_{i}^{(m)}(t) \, dt \approx \left(\frac{x}{2} \right)^{m-\alpha} \sum_{k=1}^{N} w_k^{(m-\alpha-1,0)} g_{i}^{(m)} \left( \frac{x}{2}\cdot  x_k^{(m-\alpha-1,0)} +\frac{x}{2}\right) 
    \end{split}
\end{equation*}
where $x_k^{(m-\alpha-1,0)}$ and $w_k^{(m-\alpha-1,0)}$ are the Jacobi nodes and weights, respectively \cite{Gautschi}.
By substituting this result into equation \eqref{fractional_derivative_with_M}, we finally get
\begin{equation}
        (D^{\alpha} \mathcal{M}_{\mu}[f])(x)  = \frac{1}{\Gamma(m - \alpha)} \left(\frac{x}{2} \right)^{m-\alpha}\sum_{i=1}^{n} \sum_{k=1}^{N} w_k^{(m-\alpha-1,0)} g_{i}^{(m)} \left( \frac{x}{2} \cdot x_k^{(m-\alpha-1,0)} +\frac{x}{2}\right) f_{i}.
    \label{fractional_derivative_approx}
\end{equation}

\section{Approximation of the solution of the Bagley-Torvik equation}
\label{sec:Bagley-Torvik}
In 1983, Bagley and Torvik formulate an equation to study the viscoelastically damped structures, and they used the equation to investigate the behaviour of real materials  \cite{Bagley1983FractionalCD,BagleyTorvik1984}.
This equation, called the \textit{Bagley–Torvik equation}, plays an important role in a really large number of applied science and engineering problems. 
It can also describe the motion of real physical systems, the modeling of the motion of a rigid plate immersed in a viscous fluid and a gas in a fluid, respectively.
The use of fractional derivatives allows modeling the anomalous behavior of viscoelastic materials and other complex systems with intrinsic damping \cite{entropy}.
A general formulation of the Bagley-Torvik equation is
\begin{equation*}
    \rho y^{\prime \prime}(x) + \lambda (D^{\alpha} y)(x) + \sigma y(x)= h(x), \quad x \geq 0,
\end{equation*}
with $m-1 < \alpha<m$, $m \in \mathbb{N}$, $\rho,\lambda,\sigma \in \mathbb{R}$ and  $h(x)$ a known continuous function.
In this Section, we apply the multinode Shepard method to solve Bagley-Torvik problems with different initial conditions.

\subsection{The Bagley-Torvik Boundary Value Problem}
\label{sec:BVP}
Let us consider the Boundary Value Problem (BVP)
\begin{equation} \label{eq:BVP}
\left\{
\begin{array}{ll}
     \rho y^{\prime \prime}(x)+\lambda (D^{\alpha}y)(x)+ \sigma y(x)=h(x), & x\in \left(0,T\right) \\
     y(0)=\gamma_1, & \\
     y(T) =\gamma_2, &
\end{array}
\right.
\end{equation}
with $m-1 < \alpha <m$, $m=1,2$, $\gamma_1,\gamma_2 \in \mathbb{R}$.

We aim to approximate the solution $y(x)$ of the BVP problem \eqref{eq:BVP} by the multinode Shepard operator $\mathcal{M}_{\mu}[y](x)$. 
Then, we choose ad hoc a set of collocation points $X=\left\{x_1,\dots,x_n\right\}$, with $0=x_1<x_2<\dots < x_{n-1}<x_n=T$, and impose differential and boundary conditions in \eqref{eq:BVP} exactly at that nodes
\begin{equation}  \label{eq:formula}
  \left\{  
  \begin{array}{ll}
     \rho \mathcal{M}^{\prime \prime}_{\mu}[y](x_j)+\lambda (D^{\alpha} \mathcal{M}_{\mu}[y])(x_j)+ \sigma \mathcal{M}_{\mu}[y](x_j)=h(x_j),    & j=2,\dots,n-1, \\
     \mathcal{M}_{\mu}[y])(x_1)=\gamma_1,  & \\
     \mathcal{M}_{\mu}[y])(x_n)=\gamma_2,  & 
  \end{array}
 \right.
\end{equation}
by assuming unknown the values $y_i=y(x_i)$, $i=1,\dots,n$. 
By using equations \eqref{M_riscritto}, \eqref{M_mu_derivata} and \eqref{fractional_derivative_with_M}, we get
\begin{equation}
\begin{split}
       \rho \sum_{i=1}^{n}g_{i}^{\prime \prime }(x_j) y_{i} &+ \frac{\lambda}{\Gamma(m-\alpha)} \left(\frac{x_j}{2} \right)^{m-\alpha}\sum_{k=1}^{N} w_k^{J}  \sum_{i=1}^{n} y_{i}  g_{i}^{(m)}\left( \frac{x_j}{2}\cdot x_k^{J} +\frac{x_j}{2}\right) \\
       &+ \sigma\sum_{i=1}^{n}g_{i}(x_j) y_{i} =h(x_j), \qquad j=2,\dots,n-1,
\end{split}
\label{differential_relation}
 \end{equation}
with $w_k^{J}$ and $x_k^{J}$ being the Jacobi weights and nodes associated to the Gauss-Jacobi quadrature formula of order $N=\lceil \frac{n-m}{2}\rceil$.
Let $j\in \left\{ 2,\dots,n-1\right\}$.
By rearranging the terms in \eqref{differential_relation}, we obtain
\begin{equation} \label{sostituzione_problema}
    \sum_{i=1}^{n} \big[ \rho g_{i}^{\prime \prime }(x_j) +  \frac{\lambda }{\Gamma(m-\alpha)} \left(\frac{x_j}{2} \right)^{m-\alpha}\sum_{k=1}^{N} w_k^{J} g_{i}^{(m)} \left( \frac{x_j}{2}\cdot x_k^{J} +\frac{x_j}{2}\right) + \sigma g_{i}(x_j) \big] y_{i} =h(x_j),
\end{equation}
and by setting 
\begin{equation*}
    A_i(x_j)= \rho g_{i}^{\prime \prime }(x_j) + \frac{\lambda}{\Gamma(m-\alpha)} \left(\frac{x_j}{2} \right)^{m-\alpha} \sum_{k=1}^{N} w_k^{J}  g_{i}^{(m)} \left( \frac{x_j}{2}\cdot x_k^{J} +\frac{x_j}{2}\right) + \sigma g_{i}(x_j), 
\end{equation*}
the equations \eqref{differential_relation} become
\begin{equation} \label{sostituzione_problema_finale}
    \sum_{i=1}^{n} A_i(x_j) y_{i} =h(x_j).
\end{equation}
By taking into account that  $y_1 = y(0)=\gamma_1$ and $y_n = y(T)=\gamma_2$, system \eqref{sostituzione_problema_finale} becomes
\begin{equation} \label{eq_semplificata}
    \sum_{i=2}^{n-1} A_i(x_j) y_{i} =h(x_j) - A_1(x_j) \gamma_1 - A_2(x_j) \gamma_2, \quad j=2,\dots,n-1
\end{equation}
which can be written in matrix form $\mathcal{A}\mathbf{y}=\mathbf{b}$ by setting 
\begin{equation} \label{matrix_BVP}
    \mathcal{A}_{ji}= A_{i+1}(x_{j+1}), \quad j,i=1,\dots,n-2,
\end{equation}
\begin{equation*}
\mathbf{y}= \left[
    \begin{array}{c}
         y_2\\
         \vdots\\
         y_{n-1} 
    \end{array}
    \right], \qquad
    \mathbf{b}= \left[
    \begin{array}{c}
         b_1\\
         \vdots\\
         b_{n-2} 
    \end{array}
    \right],
\end{equation*}
where $\mathbf{y}$ is the vector of unknowns and
\begin{equation} \label{eq:termine_noto_BVP}
b_{j}=h(x_{j+1}) - A_1(x_{j+1}) \gamma_1 - A_2(x_{j+1}) \gamma_2, \quad j=1, \dots,n-2,
\end{equation}
the components of the known vector.

Let us denote by 
\begin{equation}\label{ytilde}
\tilde{y}(x)=M_{\mu}[y](x)\approx y(x)
\end{equation} 
the approximation of the solution $y(x)$ of the problem \eqref{eq:BVP} computed through the multinode Shepard collocation method. Therefore, by equation \eqref{M_mu_derivata}, 
$$\tilde{y}(x)=\sum_{i=1}^{n}g_i(x) y(x_{i})$$
where 
$$
g_i(x)=\sum_{j\in \mathcal{K}_i}B_{\mu,j}(x)l_{j,i}(x).
$$
 Then, the polynomial reproduction property of the multinode Shepard operator $\mathcal{M}_{\mu }[\cdot]$ \cite[Proposition 4]{DellAccio:2018} implies the following result. 

\begin{theorem}
\label{Theo1}
    Let us assume that the problem
\eqref{eq:BVP} admits a unique polynomial solution $\phi$ of degree $r\leq d$ and that the matrix $\mathcal{A}$ is non-singular. Then 
    \begin{equation}
    \tilde{y}(x)=\phi(x), \, \forall x\in [0,T].
    \label{exactness}
    \end{equation}
    \label{theo_exactness}
\end{theorem}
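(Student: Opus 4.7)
The plan is to substitute $y=\phi$ into the collocation system and show it is an exact solution, then invoke the non-singularity of $\mathcal{A}$ to conclude that the discrete vector $\mathbf{y}$ found by solving the system must coincide with $(\phi(x_2),\dots,\phi(x_{n-1}))$, so that the reconstruction $\tilde{y}=\mathcal{M}_\mu[\phi]$ equals $\phi$ by polynomial reproduction.

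First I would invoke Proposition 4 of \cite{DellAccio:2018}: since $\deg \phi = r \leq d$ (interpreting $d$ as the reproduction degree $p-1$), we have $\mathcal{M}_\mu[\phi](x)=\phi(x)$ for all $x\in I$. In particular, differentiating gives $\mathcal{M}_\mu''[\phi](x)=\phi''(x)$, and evaluating at the endpoints $x_1=0$ and $x_n=T$ shows the two boundary equations in \eqref{eq:formula} are satisfied with $y_i=\phi(x_i)$ because $\phi(0)=\gamma_1$ and $\phi(T)=\gamma_2$. Using \eqref{M_mu_derivata}, this polynomial reproduction can be phrased as $\sum_{i=1}^n g_i^{(m)}(t)\phi(x_i)=\phi^{(m)}(t)$ for every $t\in I$, a fact I will need pointwise at each Jacobi node.

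Next I would handle the fractional-derivative term, which is the delicate point. The quadrature appearing inside $A_i(x_j)$ is the $N$-node Gauss-Jacobi rule with weight $(1-t)^{m-\alpha-1}$, and by construction it is exact on polynomials of degree $\le 2N-1$. Substituting $y_i=\phi(x_i)$ into the interior equations, the inner sum collapses, via the pointwise identity just above, to
\begin{equation*}
\sum_{k=1}^{N}w_k^{J}\,\phi^{(m)}\!\Bigl(\tfrac{x_j}{2}x_k^{J}+\tfrac{x_j}{2}\Bigr),
\end{equation*}
which the order-$N$ Gauss-Jacobi rule integrates exactly because $\phi^{(m)}$ has degree $r-m\le d-m$, and $N=\lceil(n-m)/2\rceil$ is chosen so that $2N-1\ge d-m$. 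Multiplying by the prefactor $\frac{1}{\Gamma(m-\alpha)}(x_j/2)^{m-\alpha}$ reproduces exactly $(D^\alpha\phi)(x_j)$.

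Combining the three pieces, the $j$-th interior collocation equation reduces to $\rho\phi''(x_j)+\lambda(D^\alpha\phi)(x_j)+\sigma\phi(x_j)=h(x_j)$, which holds by hypothesis since $\phi$ solves \eqref{eq:BVP}. Thus the vector with components $\phi(x_i)$, $i=2,\dots,n-1$, solves the linear system $\mathcal{A}\mathbf{y}=\mathbf{b}$ given by \eqref{eq_semplificata}. Because $\mathcal{A}$ is assumed non-singular, this solution is unique, so the values $y_i$ produced by the method coincide with $\phi(x_i)$. Applying $\mathcal{M}_\mu$ and invoking polynomial reproduction once more yields $\tilde{y}(x)=\mathcal{M}_\mu[\phi](x)=\phi(x)$ for every $x\in[0,T]$, which is \eqref{exactness}. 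The main obstacle is the quadrature-exactness step: one must verify carefully that the chosen $N$ is large enough relative to $d$ so the Gauss-Jacobi rule sees the integrand as a polynomial within its degree of exactness; the rest is bookkeeping built on the reproduction property and linearity.
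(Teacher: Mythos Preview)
Your proof is correct and follows essentially the same route as the paper: invoke the polynomial reproduction $\mathcal{M}_\mu[\phi]=\phi$, check that $(\phi(x_i))_i$ satisfies the collocation system, use non-singularity of $\mathcal{A}$ for uniqueness, and apply reproduction again to the reconstruction. You are in fact more careful than the paper on one point---you explicitly verify that the Gauss--Jacobi rule of order $N=\lceil(n-m)/2\rceil$ integrates $\phi^{(m)}$ (of degree $\le d-m$) exactly, whereas the paper's argument tacitly identifies the quadrature-based fractional derivative appearing in $\mathcal{A}$ with the exact Caputo derivative $D^\alpha\phi$.
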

\begin{proof}
By the polynomial reproduction property of the operator $\mathcal{M}_{\mu }[\cdot]$ we have $$\mathcal{M}_{\mu}[\phi]\equiv \phi$$
and then, 
$$   \rho  \mathcal{M}^{\prime \prime}_{\mu}[\phi](x)+\lambda (D^{\alpha} \mathcal{M}_{\mu}[\phi])(x)+ \sigma \mathcal{M}_{\mu}[\phi](x)=\rho \phi^{\prime \prime}(x)+\lambda (D^{\alpha}\phi)(x)+ \sigma \phi(x),$$
for all $x\in [0,T]$. Being, by \eqref{eq:BVP},
$$
\left\{
\begin{array}{ll}
\rho \phi^{\prime \prime}(x)+\lambda (D^{\alpha}\phi)(x)+ \sigma \phi(x)=h(x), & x\in (0,T),\\
\phi(0)=\gamma_1,&\\
\phi(T)= \gamma_2 &
\end{array}\right.
$$
the vector $\mathbf{\tilde{y}}=[\phi(x_2),\dots,\phi(x_{n-1})]$ is the unique solution of system \eqref{eq_semplificata} and the thesis follows since
\begin{equation}
\tilde{y}\left( x\right)
=\sum\limits_{i=2}^{n-1}g_{i}\left( x\right)
\phi(x_i)+g_1(x)\gamma_1+g_2(x)\gamma_2=\phi(x).
\end{equation}
\end{proof}

\subsection{The Bagley-Torvik Initial Value Problem}
Let us now consider the Initial Value Problem (IVP)
\begin{equation} \label{eq:IVP}
\left\{
\begin{array}{ll}
     \rho y^{\prime \prime}(x)+\lambda (D^{\alpha}y)(x)+ \sigma y(x)=h(x), & x\in \left(0,T\right] \\
     y(0)=\gamma_1, & \\
     y^{\prime}(0) =\gamma_2, &
\end{array}
\right.
\end{equation}
with $m-1 < \alpha <m$, $m=1,2$, $\gamma_1,\gamma_2 \in \mathbb{R}$.

By taking into account the initial condition $y_1=y(0)=\gamma_1$ in \eqref{eq:IVP} the system \eqref{sostituzione_problema_finale} now becomes 
\begin{equation} \label{eq_semplificata2}
    \sum_{i=2}^{n} A_i(x_j) y_{i} =h(x_j) - A_1(x_j) \gamma_1 \quad j=2,\dots,n,
\end{equation}
and the additional condition $y^{\prime}(0)=\gamma_2$ yields the equation
\begin{equation*}
    \sum_{i=2}^{n} g_i^{\prime}(0) y_i=\gamma_2 - g_1^{\prime} (0) \gamma_1,
\end{equation*}
which has to be included in the collocation system \eqref{eq_semplificata2}.
The linear system \eqref{eq_semplificata2} can be written in matrix form $\mathcal{A}\mathbf{y}=\mathbf{b}$ by setting 
\begin{equation} \label{matrix_IVP}
\begin{split}
    \mathcal{A}_{ji}&= A_{i+1}(x_{j+1}), \quad j,i=1,\dots,n-1, \\
    \mathcal{A}_{ni}&= g_{i+1}^{\prime}(0), \quad i=1,\dots,n-1,
\end{split}  
\end{equation} 
\begin{equation*}
\mathbf{y}= \left[
    \begin{array}{c}
         y_2\\
         \vdots\\
         y_{n} 
    \end{array}
    \right],
    \qquad
    \mathbf{b}= \left[
    \begin{array}{c}
         b_1\\
         \vdots\\
         b_{n-1}\\
         b_{n}
    \end{array}
    \right],
\end{equation*}
where
\begin{equation} \label{eq:termine_noto_IVP}
\begin{split}
b_{j}&=h(x_{j+1}) - A_1(x_{j+1}) \gamma_1 , \quad j=1, \dots,n-1, \\
    b_{n}&=\gamma_2 - g_1^{\prime} (0) \gamma_1.
\end{split}
\end{equation}
We notice that the linear system \eqref{matrix_IVP} corresponds to a least-square problem since the matrix $\mathcal{A}$ has dimension $n \times (n-1)$.

In analogy to Theorem \ref{Theo1}, the polynomial reproduction property of the multinode Shepard operator $\mathcal{M}_{\mu }[\cdot]$ \cite[Proposition 4]{DellAccio:2018} implies the following result. 
\begin{theorem}
\label{Theo2}
    Let us assume that the problem
\eqref{eq:IVP} admits a unique polynomial solution $\phi$ of degree $r\leq d$ and that the matrix $\mathcal{A}$ in \eqref{matrix_IVP} is full rank. Then 
    \begin{equation}
    \tilde{y}(x)=\phi(x), \, \forall x\in [0,T].
    \label{exactness}
    \end{equation}
    \label{theo_exactness}
\end{theorem}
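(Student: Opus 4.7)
The plan is to mirror the argument used for Theorem \ref{Theo1}, again exploiting the polynomial reproduction property of $\mathcal{M}_{\mu}[\cdot]$ guaranteed by \cite[Proposition 4]{DellAccio:2018}, and adapt it to the overdetermined collocation system that arises in the IVP case. Since $\phi$ is a polynomial of degree $r\leq d$, polynomial reproduction gives $\mathcal{M}_{\mu}[\phi]\equiv\phi$ on $[0,T]$ as an identity of functions. This lifts by ordinary differentiation to $\mathcal{M}_{\mu}^{\prime\prime}[\phi]\equiv \phi^{\prime\prime}$, by linearity of the Caputo operator to $(D^\alpha \mathcal{M}_{\mu}[\phi])(x)=(D^\alpha \phi)(x)$, and, via the reconstruction formula \eqref{M_mu_derivata}, to the identity $\sum_{i=1}^{n}g_i^{\prime}(0)\phi(x_i)=\phi^{\prime}(0)$.

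Next I would verify that the vector $\tilde{\mathbf{y}}=[\phi(x_2),\dots,\phi(x_n)]^{T}$ satisfies every row of the overdetermined system $\mathcal{A}\mathbf{y}=\mathbf{b}$ defined by \eqref{matrix_IVP}--\eqref{eq:termine_noto_IVP}. For the first $n-1$ rows, I would substitute $y_i=\phi(x_i)$ into \eqref{eq_semplificata2} (using $y_1=\gamma_1=\phi(0)$) and apply the three identities above to reduce the left-hand side to $\rho\phi^{\prime\prime}(x_{j+1})+\lambda(D^\alpha\phi)(x_{j+1})+\sigma\phi(x_{j+1})$, which equals $h(x_{j+1})$ by hypothesis. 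For the $n$-th row, the identity $\sum_{i=1}^{n}g_i^{\prime}(0)\phi(x_i)=\phi^{\prime}(0)=\gamma_2$ rearranges, using $\phi(x_1)=\gamma_1$, to $\sum_{i=2}^{n}g_i^{\prime}(0)\phi(x_i)=\gamma_2-g_1^{\prime}(0)\gamma_1=b_n$. Hence $\mathcal{A}\tilde{\mathbf{y}}=\mathbf{b}$, so the system is consistent.

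Once consistency is established, the full-rank hypothesis — which for an $n\times(n-1)$ matrix necessarily means full column rank $n-1$ — implies uniqueness of the solution: the associated normal-equation least-squares problem has a unique minimizer, and that minimizer attains zero residual, so it coincides with $\tilde{\mathbf{y}}$. Therefore the collocation scheme returns exactly $y_i=\phi(x_i)$ for $i=2,\dots,n$, and the reconstruction formula \eqref{ytilde} together with $y_1=\gamma_1=\phi(0)$ and $\mathcal{M}_{\mu}[\phi]\equiv\phi$ yields $\tilde{y}(x)=\mathcal{M}_{\mu}[\phi](x)=\phi(x)$ on $[0,T]$.

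The main delicate point — and what distinguishes this argument from the proof of Theorem \ref{Theo1} — is handling the overdetermined shape of $\mathcal{A}$: full column rank by itself guarantees only uniqueness of a least-squares solution, not exact solvability. Consistency must be supplied separately, and it is precisely the fact that $\phi$ satisfies \emph{both} the ODE at each interior collocation node and the initial derivative condition that makes the residual of the last row vanish simultaneously with those of the first $n-1$ rows. After that, the remainder of the proof is a direct transcription of the BVP case.
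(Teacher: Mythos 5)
Your proposal is correct and follows essentially the same route as the paper's own proof: polynomial reproduction gives $\mathcal{M}_{\mu}[\phi]\equiv\phi$, the vector $[\phi(x_2),\dots,\phi(x_n)]$ annihilates the residual of the overdetermined system, full column rank forces it to be the unique least-squares solution, and the reconstruction formula then yields $\tilde{y}\equiv\phi$. Your version merely spells out more explicitly the consistency of the last row (the initial derivative condition) and the distinction between exact solvability and least-squares uniqueness, which the paper leaves implicit.
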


\begin{proof}
By the polynomial reproduction property of the operator $\mathcal{M}_{\mu }[\cdot]$ we have $$\mathcal{M}_{\mu}[\phi]\equiv \phi$$
and then, 
$$   \rho  \mathcal{M}^{\prime \prime}_{\mu}[\phi](x)+\lambda (D^{\alpha} \mathcal{M}_{\mu}[\phi])(x)+ \sigma \mathcal{M}_{\mu}[\phi](x)=\rho \phi^{\prime \prime}(x)+\lambda (D^{\alpha}\phi)(x)+ \sigma \phi(x),$$
for all $x\in [0,T]$. Being, by \eqref{eq:IVP},
$$
\left\{
\begin{array}{ll}
\rho \phi^{\prime \prime}(x)+\lambda (D^{\alpha}\phi)(x)+ \sigma \phi(x)=h(x), & x\in (0,T],\\
\phi(0)=\gamma_1,&\\
\phi^{\prime}(0)=\gamma_2,&
\end{array}\right.
$$
the vector $\mathbf{\tilde{y}}=[\phi(x_2),\dots,\phi(x_n)]$ making null the residual $\mathbf{r}=\mathbf{b}-\mathcal{A}\mathbf{\tilde{y}}$ is the unique solution, in the least square sense, of system \eqref{eq_semplificata2}  and the thesis follows since
\begin{equation}
\tilde{y}\left( x\right)
=\sum\limits_{i=2}^{n}g_{i}\left( x\right)
\phi(x_i)+g_1(x)\gamma_1=\phi(x).
\end{equation}
\end{proof}

\section{Sets of nodes and coverings} \label{sec:nodes}
In this Section, we discuss the sets of nodes and the associated coverings of the multinode Shepard approximant, which we use in the numerical experiments.

\subsection{Mixed Equispaced-Chebyshev set of nodes}
We consider $n_e$ equispaced points $x_1<\dots<x_{n_e}$ in $\left[ 0,1 \right]$ and in each interval 
\begin{equation} \label{sotto_intervallo}
    \left[ x_{i}, x_{i+1} \right] = \left[ \left(i-1\right)h, ih \right], \quad i=1,\dots,n_e-1,\quad h= \frac{1}{n_e-1},
\end{equation}
the $d-1$ Chebyshev points of the first kind 
\begin{equation*}
    x_{{i,k}}= x_i+ \left(-\cos{\left( \frac{ k}{d} \pi \right)} +1 \right) \frac{\left(x_{i+1}-x_i\right)}{2}, \, i=1,\dots,n_e-1, \, k=1,\dots,d-1, 
\end{equation*}
obtained from the Chebyshev points of the first kind by an affine mapping of  $\left[-1,1 \right]$ onto $ \left[ x_{i}, x_{i+1} \right]$.
The set of nodes $X$, of cardinality $n=d(n_e-1)+1$, is defined by
\begin{equation} \label{X_chebyshev}
    X= \bigcup_{i=1}^{n_e-1} \left\{x_i, x_{{i,1}}, \dots x_{{i,d-1}},x_{i+1}\right\}
\end{equation}
and the associated covering is realized by the subsets 
\begin{equation*}
    F_{i}= \{ x_{i},x_{{i,1}}, \dots,x_{{i,d-1}},x_{i+1} \}, \quad i=1,\dots,n_e-1.
\end{equation*}
We call the set $X$ in \eqref{X_chebyshev} \textsl{mixed Equispaced-Chebyshev set of nodes} (mixed E-C).
As an example, in Figure \ref{fig:Chebyshev}, we show the mixed Equispaced-Chebyshev set of nodes for $n_e=6$ and $d=5$, which generates a set $X$ of $n=26$ nodes.
\begin{figure}
    \centering \includegraphics[width=0.33\textwidth]{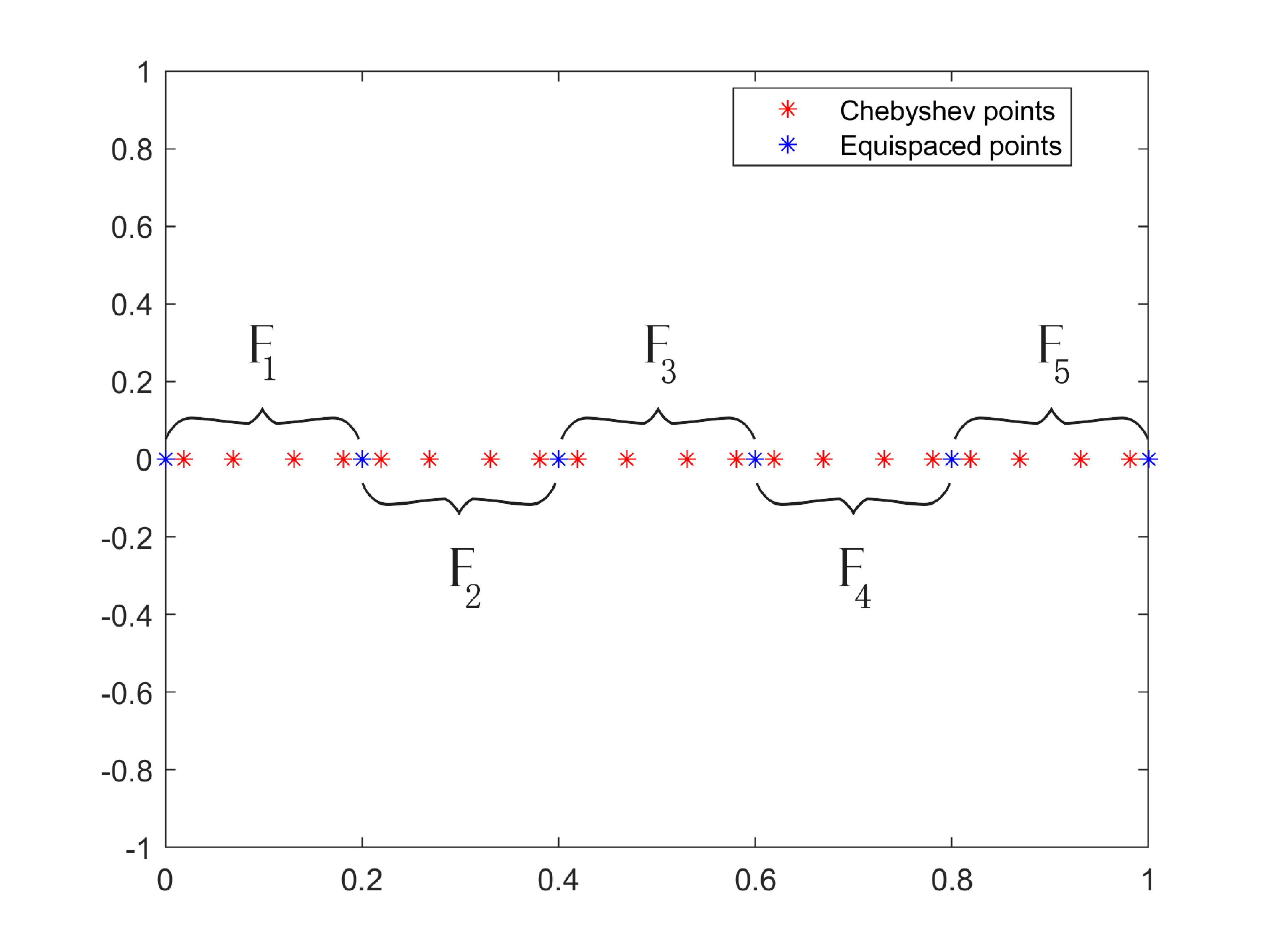}
    \caption{Mixed E-C nodes with $n_e=6$ and $d=5$}
    \label{fig:Chebyshev}
\end{figure}

\subsection{Equispaced set of nodes}
We take $n$ equispaced points in $\left[ 0,1 \right]$
\begin{equation*}
     x_{i }=(i-1)h,  \quad i=1,\dots,n,\quad h= \frac{1}{n-1},
\end{equation*}
and we consider the associated covering constituted by the subsets 
\begin{equation*}
\begin{array}{ll}
    F_{1} = \left\{ x_{1}, \dots, x_{d+1} \right\}, &  \\
     F_{k}= \left\{ x_{\left( k-1 \right)\left( d+1-q \right)}, \dots, x_{ k(d+1)-(k-1)q } \right\}, & k=2,\dots,s-1,\\
      F_{s}= \left\{ x_{n-d}, \dots, x_{ n } \right\}. &\\
\end{array}
\end{equation*}
with $q \in \mathbb{N}_0$, $0\leq q< d$, representing the position of the node in $F_k$ where the superimposition begins, by the assumption that if the last set $F_s$ does not contain $d+1$ points, additional points are taken from $F_{s-1}$, moving backward.
As examples, in Figure \ref{fig:Equispaziati}, we present two cases: $n=40$, $d=7$ and $q=0$, in which the last subset $F_6$ of the covering $\mathcal{F}$ overlaps with $F_5$ at $4$ points and  $n=40$, $d=7$ and $q=1$ in which each subset overlaps the previous one at $2$ points and the last subset $F_7$ overlaps with $F_6$ at $6$ points.

\begin{figure}
    \centering
    \parbox{.33\linewidth}{\centering
    \includegraphics[width=1\linewidth]{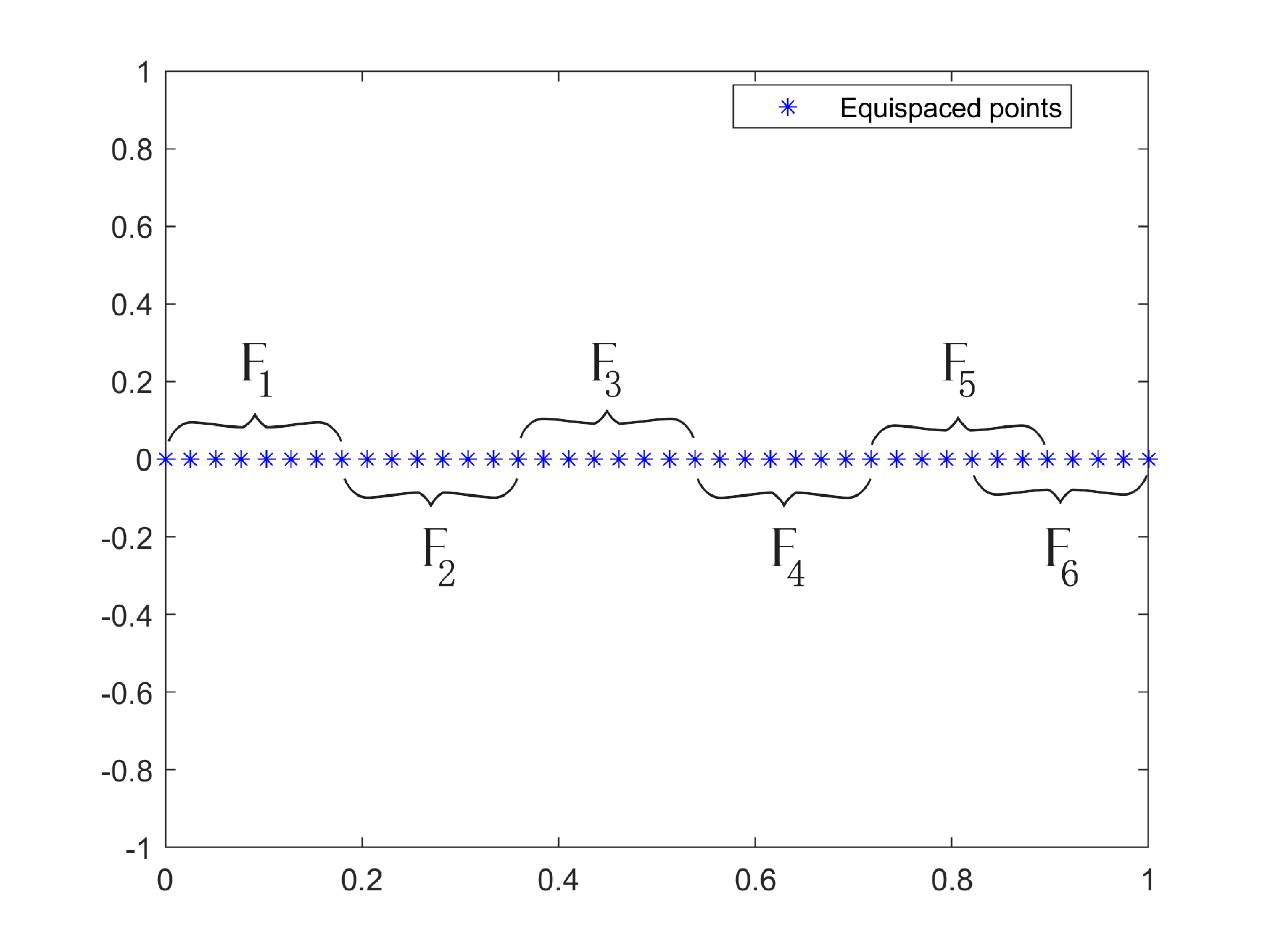}\\[-0.5ex] {$q=0$}
     }
     \parbox{.33\linewidth}{\centering
    \includegraphics[width=1\linewidth]{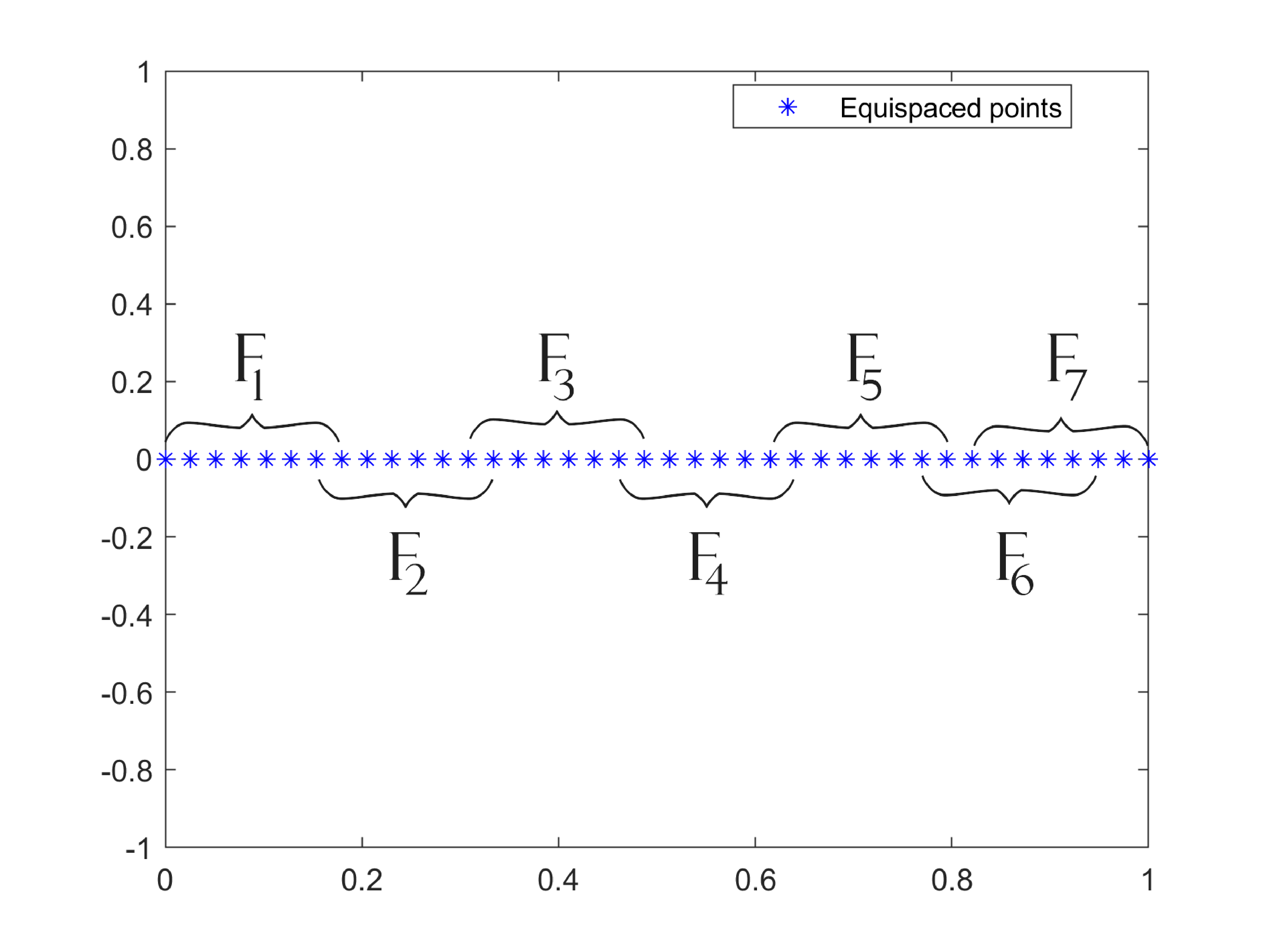}\\[-0.5ex] {$q=1$}
     }
    \caption{Equispaced nodes with $n_e=40$ and $d=7$}
    \label{fig:Equispaziati}
\end{figure}

\subsection{Mixed Equispaced-mock-Chebyshev set of nodes}
We consider \( n_e \) equispaced points in \([0,1]\) and for each subinterval \([x_i, x_{i+1}]\), with \( i=1,\dots,n_e-1 \), we construct a set of \( n_s=3(d+1) \) equispaced points, from which we extract a subset of \( d+1 \) mock-Chebyshev points. 
These points are chosen from the \( n_s +2 \) equispaced points in \([x_i, x_{i+1}]\) in such a way that their positions best mimic those of the Chebyshev-Lobatto nodes of the corresponding degree \( d \). The ratio behind this choice is that mock-Chebyshev nodes form a modified set of interpolation points inspired by Chebyshev-Lobatto nodes, designed to mitigate the Runge phenomenon and enhance the accuracy of polynomial interpolation \cite{DEMARCHI2015,DELLACCIO2022}. 
As for the mixed equispaced-Chebyshev nodes, the set $X$ has cardinality $n=d(n_e-1)+1$.
We call this set of nodes \textsl{mixed Equispaced-Mock-Chebyshev set of nodes} (mixed E-MC). This configuration of nodes is particularly useful when the total number of interpolation nodes $n$ is very large.
In Figure \ref{fig:MOCK}, as an example, we display the case $n_e=4$, $n_s=10$ and $d=6$ by showing the starting set and final set with the specification of the covering.

\begin{figure}
    \centering
    \parbox{.33\linewidth}{\centering
    \includegraphics[width=1\linewidth]{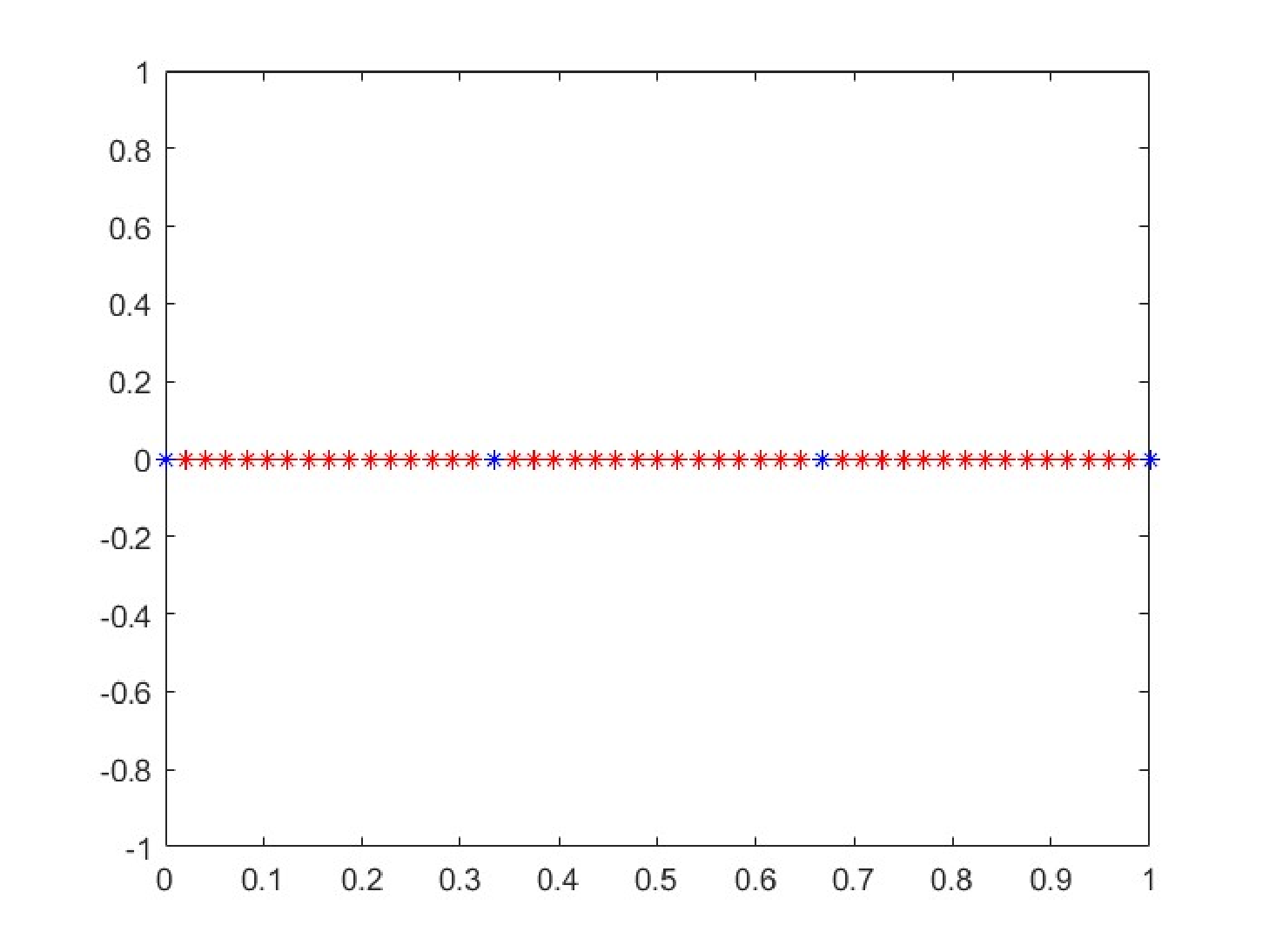}\\[-0.5ex]
     }\hfill
     \parbox{.33\linewidth}{\centering
    \includegraphics[width=1\linewidth]{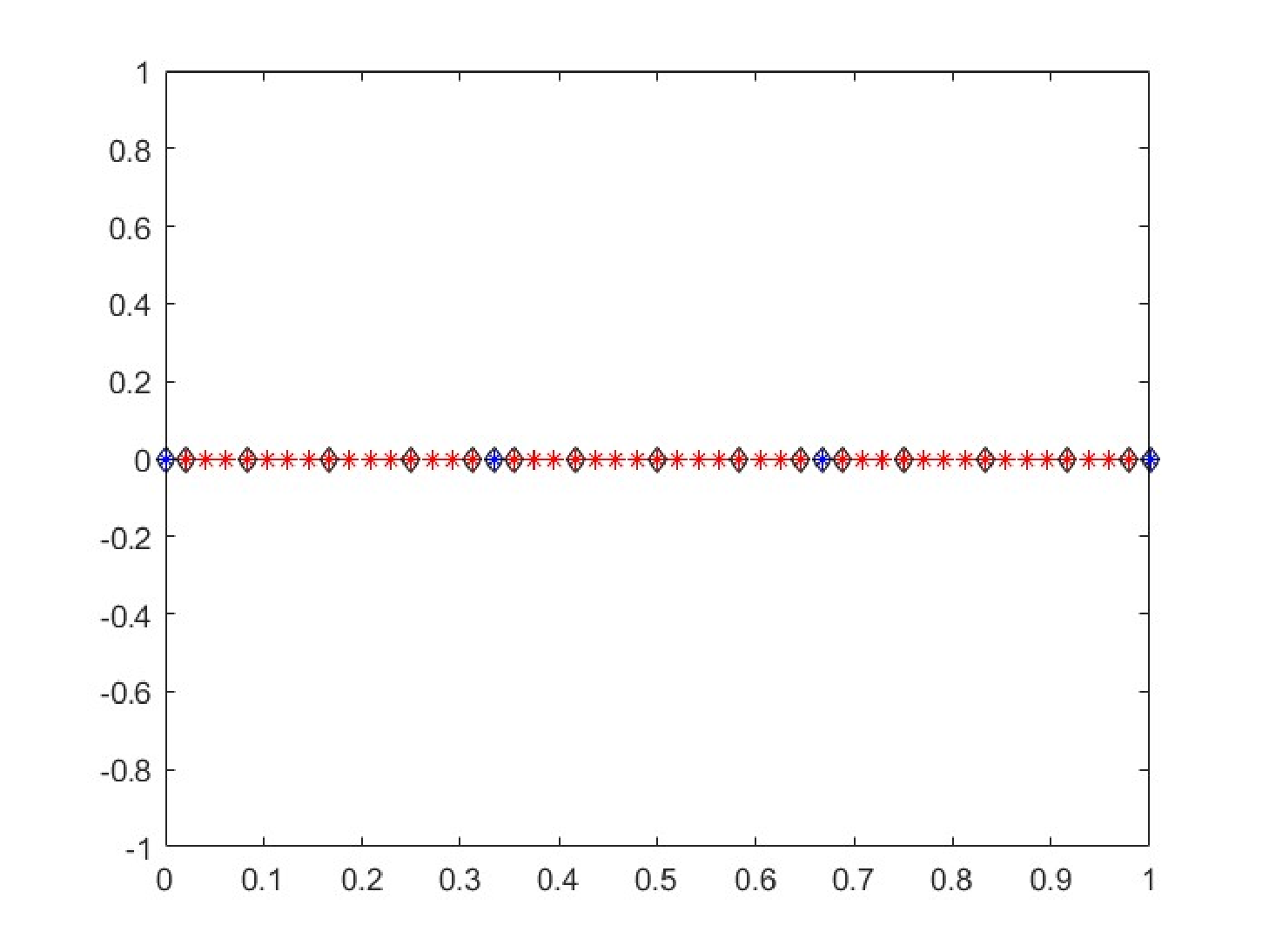}\\[-0.5ex]
     }\hfill
    \parbox{.33\linewidth}{\centering
    \includegraphics[width=1\linewidth]{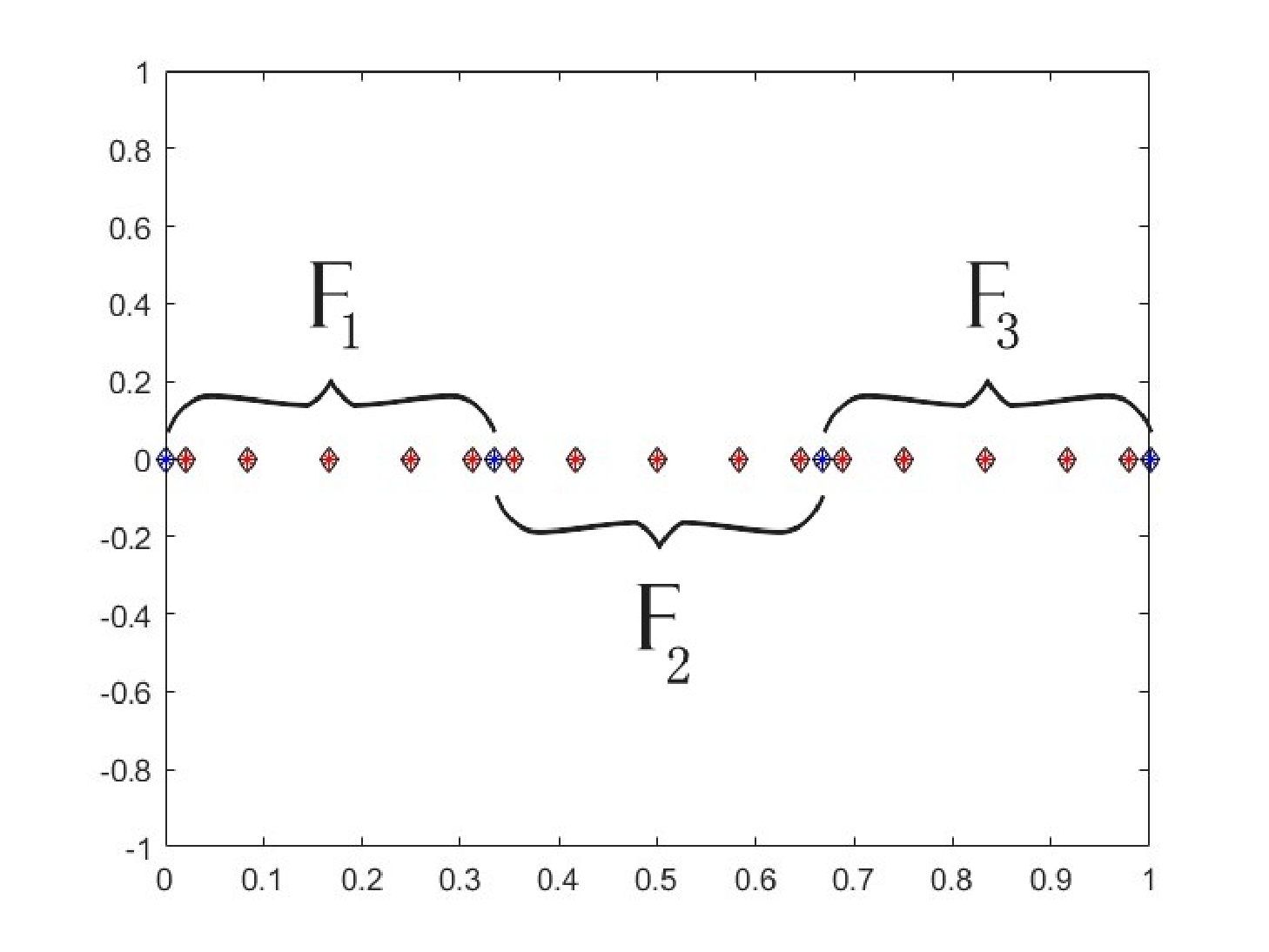}\\[-0.5ex]
    }
    \caption{Mixed E-MC nodeS with $n_e=4$, $n_s=10$ and $d=6$}
    \label{fig:MOCK}
\end{figure}

\section{Numerical experiments}
\label{sec:numerical_experiments}
In this Section, we first test the performance and show the accuracy of the proposed method in approximating the fractional derivative of a given function. We use the sets of nodes discussed in Section \ref{sec:nodes} and different degrees for the local polynomial interpolants.
Subsequently, we show the efficiency of the method in solving various Bagley-Torvik problems.

\subsection{Numerical approximation of $D^{\alpha}f$}
In the following examples \cite{esempi_numerici,Application_BTE}, we compute the pointwise absolute errors 
\begin{equation} \label{pointwise_error}
    E^{\alpha}(\xi_k)=\left\vert y(\xi_k)-(D^{\alpha}\mathcal{M}_{4}\left[y \right])(\xi_k) \right\vert, \quad k=1,\dots,N,
\end{equation}
and the mean absolute error
\begin{equation} \label{mean_error}
    E^{\alpha}_{mean}=  \frac{1}{N} \displaystyle \sum_{k=1}^{N} E^{\alpha}(\xi_k) ,
\end{equation}
at $N=100$ equispaced points $\xi_k$ in  $\left[0,1\right]$. The order $\alpha$ of the fractional derivative varies in the set $\left\{1/5, 1/2, 4/5, 6/5, 3/2, 9/5 \right\}$, the number of equispaced nodes is always $n=(n_e-1)d+1$ and the covering $\mathcal{F}$ is realized by fixing $q=0$.  
For each example we show the semilog plot of the pointwise error \eqref{pointwise_error}  when using local polynomial interpolants of degree $d=8$, and the semilog plot of the mean absolute error \eqref{mean_error} by varying $d$ from $2$ to $10$.

\begin{example} \label{example_1}
Let $f(x)=sin(x)$. 
It is known that
\begin{equation*}
    \left( D^{\alpha} f\right) (x)= \frac{i^m x^{m-\alpha}}{2i}\left[ E_{1, m-\alpha+1}\left(i  x \right)- (-1)^m E_{1, m-\alpha+1}\left(- i x \right) \right],
\end{equation*}
where $m\in \mathbb{N}$, $m-1<\alpha<m$ and $E_{\gamma,\delta}(x)$ is the generalized Mittag-Leffler function \cite{oldham2006fractional}, defined as 
\begin{equation} \label{Mittag_leffle_function}
    E_{\gamma,\delta}(x)= \sum_{k=0}^{\infty} \frac{x^k}{\Gamma(\gamma k +\delta)}, \quad x \in \mathbb{C}, \quad  \gamma, \delta \in \mathbb{C}, \quad R(\gamma),  R(\delta)>0.
\end{equation}
We fix $n_e=9$. The numerical results are reported in Figure \ref{fig:pointwise_error_example1} and Figure \ref{fig:mean_error_example1}.
\end{example}

\begin{example}\label{example_2}
Let $f(x)=x^{\frac{9}{2}}$.
The analytical form of the Caputo fractional derivative for $f(x)$ is given by
\begin{equation*}
    (D^{\alpha} f)(x) = \frac{\Gamma(\frac{9}{2}+1)}{\Gamma(\frac{9}{2}+1-\alpha)}x^{\frac{9}{2}-\alpha}.
\end{equation*}
We fix $n_e=20$.
The numerical results are reported in Figure \ref{fig:pointwise_error_example2} and Figure \ref{fig:mean_error_example2}.
\end{example}

\begin{example} \label{example_3}
    Let $f(x)=e^{2 x }$.
The analytical form of the Caputo fractional derivative for $f(x)$ is given by \cite{some_fractional_derivatives}
\begin{equation*}
    \left( D^{\alpha} f \right) (x) =\displaystyle \sum_{k=0}^{\infty} \frac{2^{k+m}}{{\Gamma(k+1+m-\alpha)}} x^{k+m-\alpha}= 2^{m} x^{m-\alpha} E_{1, m-\alpha+1}\left(2 x \right),
\end{equation*}
where $m-1<\alpha<m$ and $E_{\gamma,\delta}(x)$ is the generalized Mittag-Leffler function \eqref{Mittag_leffle_function}.
We fix $n_e=10$.
The numerical results are reported in Figure \ref{fig:pointwise_error_example3} and Figure \ref{fig:mean_error_example3}

\end{example}

For all three examples, the semilog plots of the pointwise absolute error show excellent performance of the method, with errors typically ranging between 
$10^{-20}$ and $10^{-6}$. The results confirm that the method is robust and accurate across a broad range of fractional orders. Among the three node configurations, the mixed E-C and E-MC sets tend to offer slightly better or comparable performance to the equispaced nodes, particularly in handling endpoint behavior.

The semilog plots of the mean absolute error as a function of the polynomial degree show a clear trend: increasing the degree of the local interpolants improves accuracy, with errors decreasing by several orders of magnitude. This trend is consistent across all three test functions. The mixed node sets, especially the mixed E-C configuration, often achieve lower mean errors at moderate degrees.

\begin{figure}
     \centering
    \parbox{.33\linewidth}{\centering
    \includegraphics[width=1\linewidth]{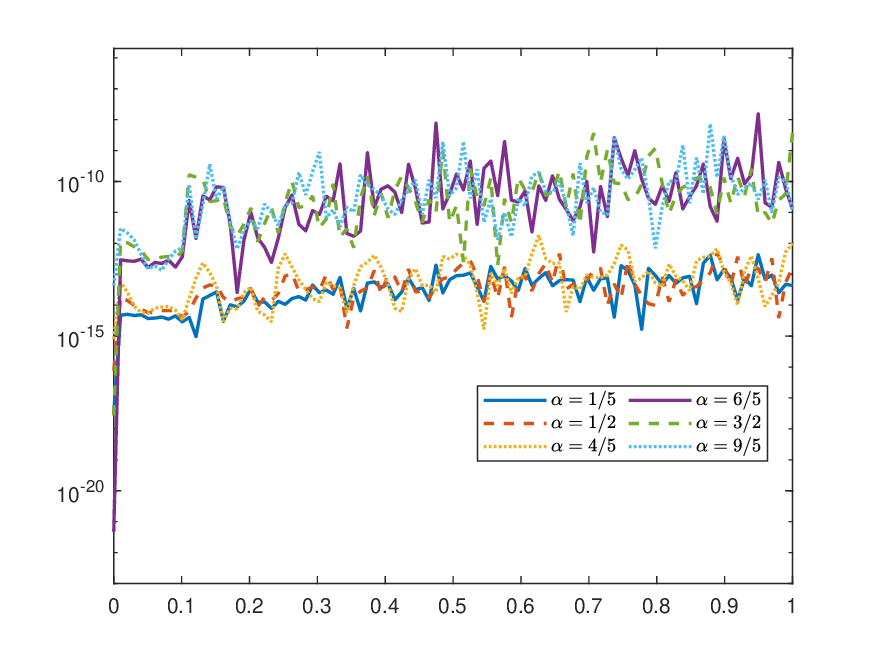}\\[-0.5ex] \scriptsize{Equispaced}}\hfill
    \parbox{.33\linewidth}{\centering
    \includegraphics[width=1\linewidth]{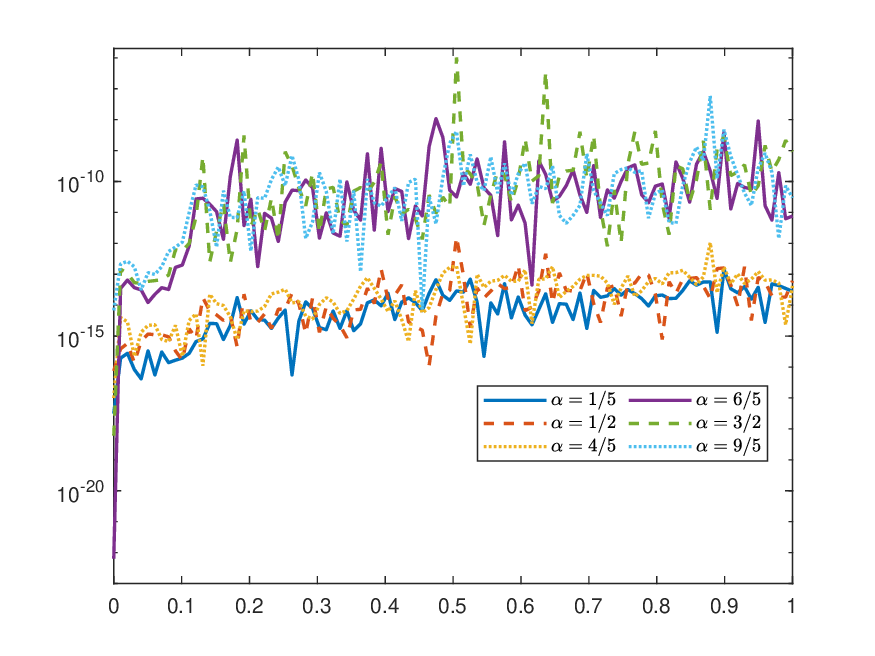}\\[-0.5ex]
     \scriptsize{Mixed E-C} }\hfill
     \parbox{.33\linewidth}{\centering
    \includegraphics[width=1\linewidth]{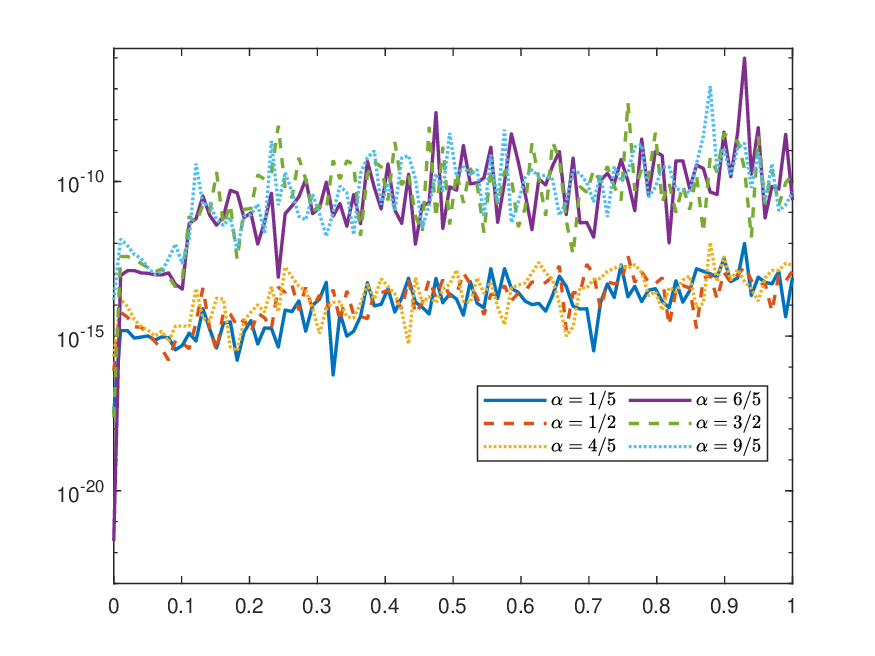}\\[-0.5ex]
    \scriptsize{Mixed E-MC} }
    \caption{Semilog plots of the pointwise absolute error for the Example \ref{example_1} for the various values of $\alpha$ with $d=8$.}
    \label{fig:pointwise_error_example1}
\end{figure}

\begin{figure}
    \centering
    \parbox{.33\linewidth}{\centering
    \includegraphics[width=1\linewidth]{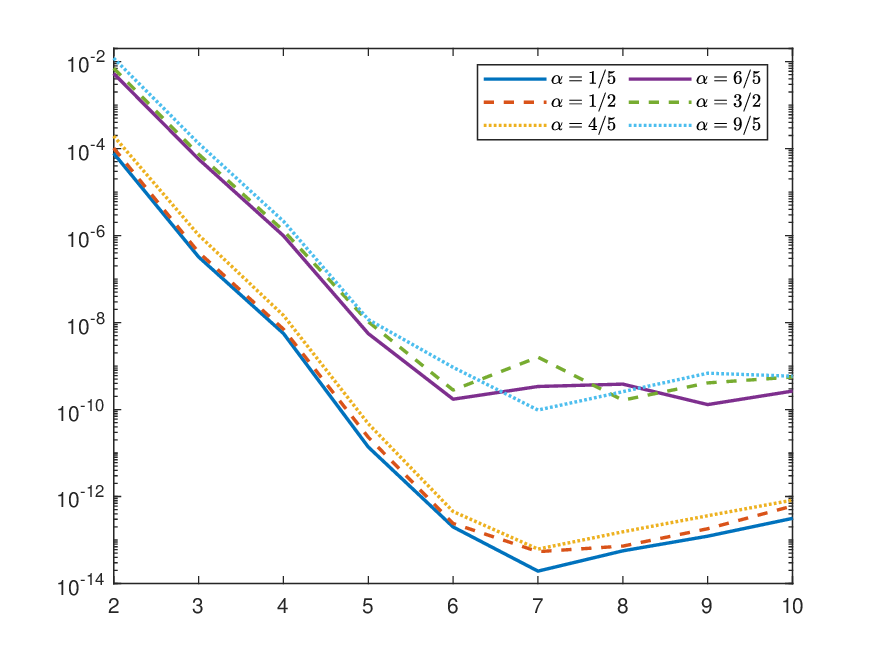}\\[-0.5ex] \scriptsize{Equispaced}}\hfill
    \parbox{.33\linewidth}{\centering
    \includegraphics[width=1\linewidth]{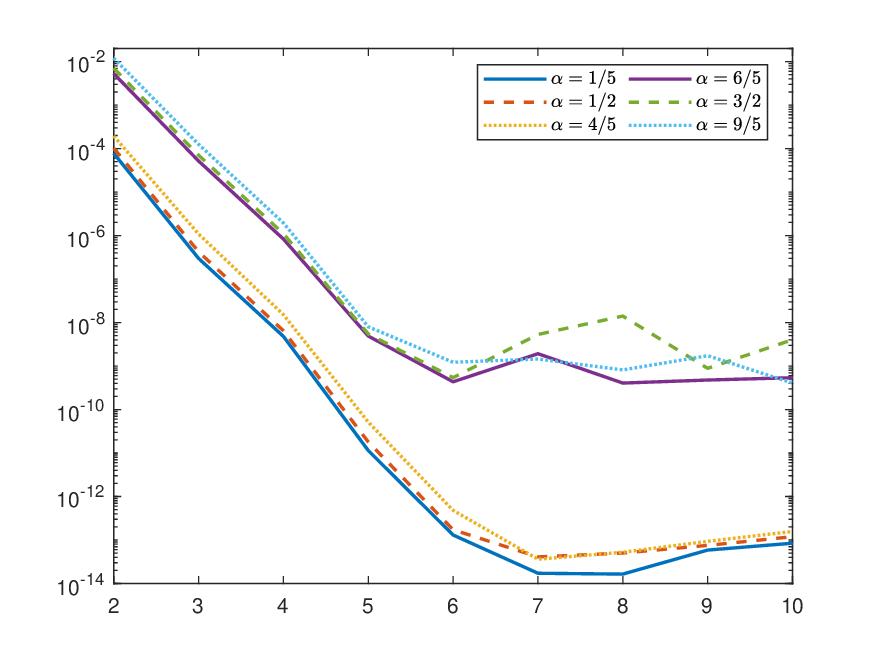}\\[-0.5ex]
     \scriptsize{Mixed E-C} }\hfill
     \parbox{.33\linewidth}{\centering
    \includegraphics[width=1\linewidth]{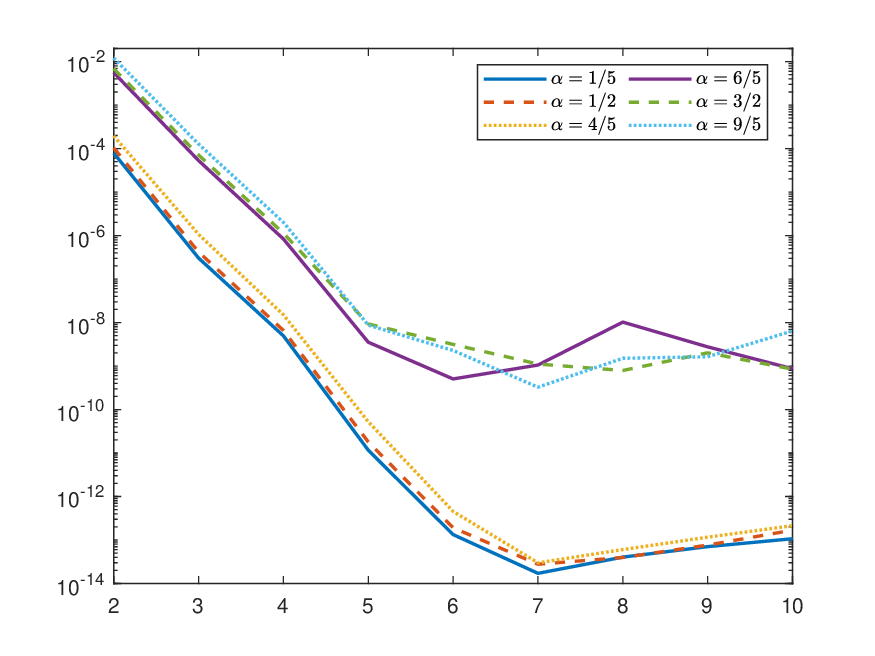}\\[-0.5ex]
     \scriptsize{Mixed E-MC} }    
    \caption{Semilog plots of the mean absolute error for the Example \ref{example_1} for the various values of $\alpha$ with $d$ varying from $2$ to $10$.}
    \label{fig:mean_error_example1}
\end{figure}

\begin{figure}
    \centering
    \parbox{.33\linewidth}{\centering
    \includegraphics[width=1\linewidth]{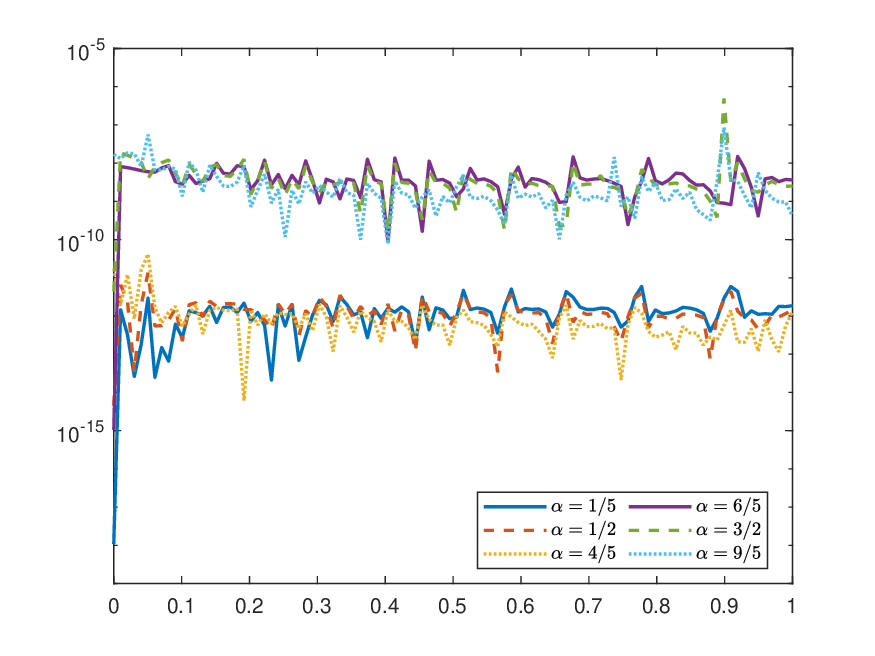}\\[-0.5ex] \scriptsize{Equispaced}}\hfill
    \parbox{.33\linewidth}{\centering
    \includegraphics[width=1\linewidth]{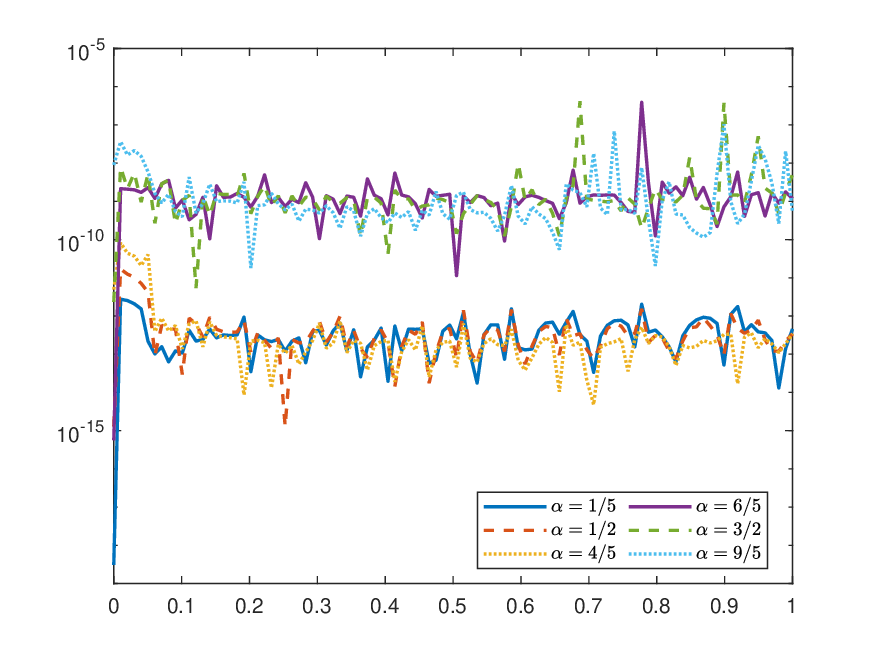}\\[-0.5ex]
     \scriptsize{Mixed E-C} }\hfill
     \parbox{.33\linewidth}{\centering
    \includegraphics[width=1\linewidth]{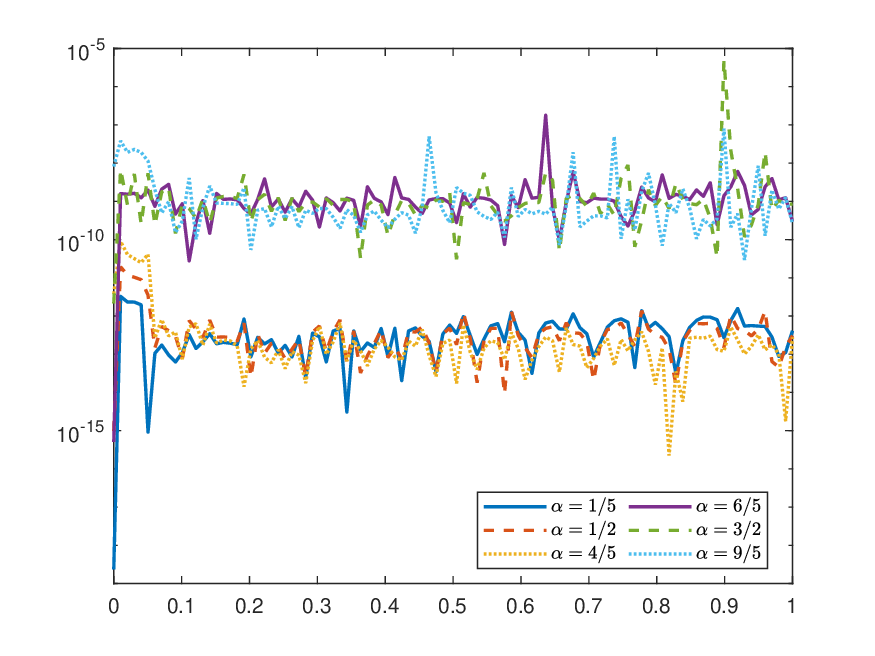}\\[-0.5ex]
    \scriptsize{Mixed E-MC}}
    \caption{Semilog plots of the pointwise absolute error for the Example \ref{example_2} for the various values of $\alpha$ with $d=8$.}
    \label{fig:pointwise_error_example2}
\end{figure}

\begin{figure}
    \centering
    \parbox{.33\linewidth}{\centering
    \includegraphics[width=1\linewidth]{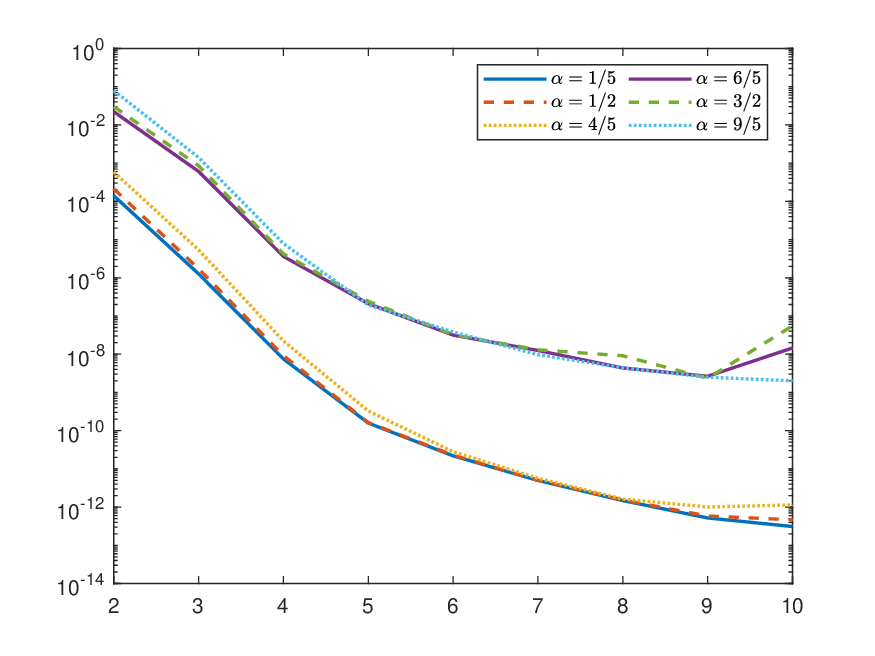}\\[-0.5ex] \scriptsize{Equispaced}}\hfill
    \parbox{.33\linewidth}{\centering
    \includegraphics[width=1\linewidth]{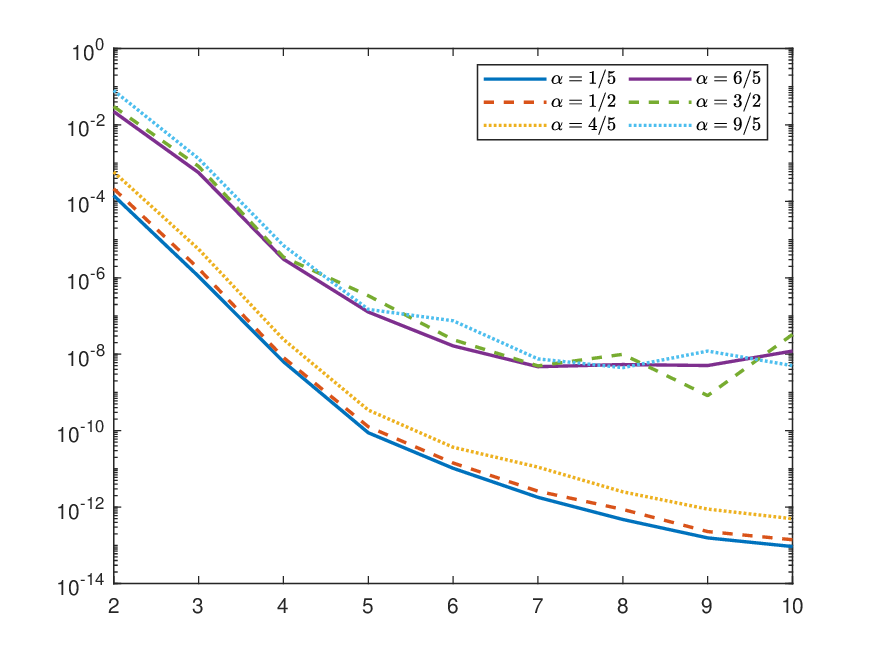}\\[-0.5ex]
     \scriptsize{Mixed E-C} }\hfill
     \parbox{.33\linewidth}{\centering
    \includegraphics[width=1\linewidth]{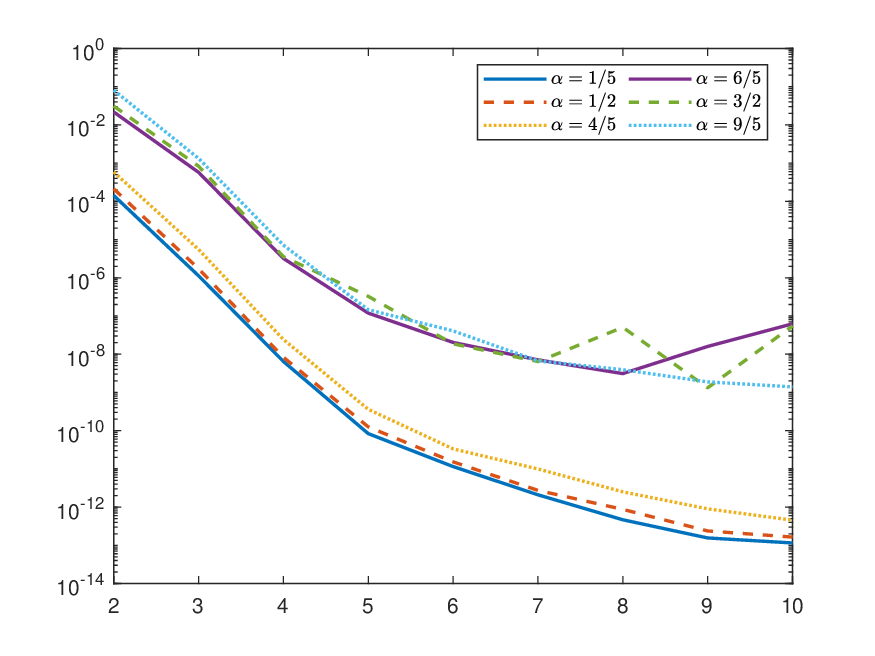}\\[-0.5ex]
     \scriptsize{Mixed E-MC} }  
    \caption{Semilog plots of the mean absolute error for the Example \ref{example_2} for the various values of $\alpha$ with $d$ varying from $2$ to $11$.}
    \label{fig:mean_error_example2}
\end{figure}

\begin{figure}
    \centering
    \parbox{.33\linewidth}{\centering
    \includegraphics[width=1\linewidth]{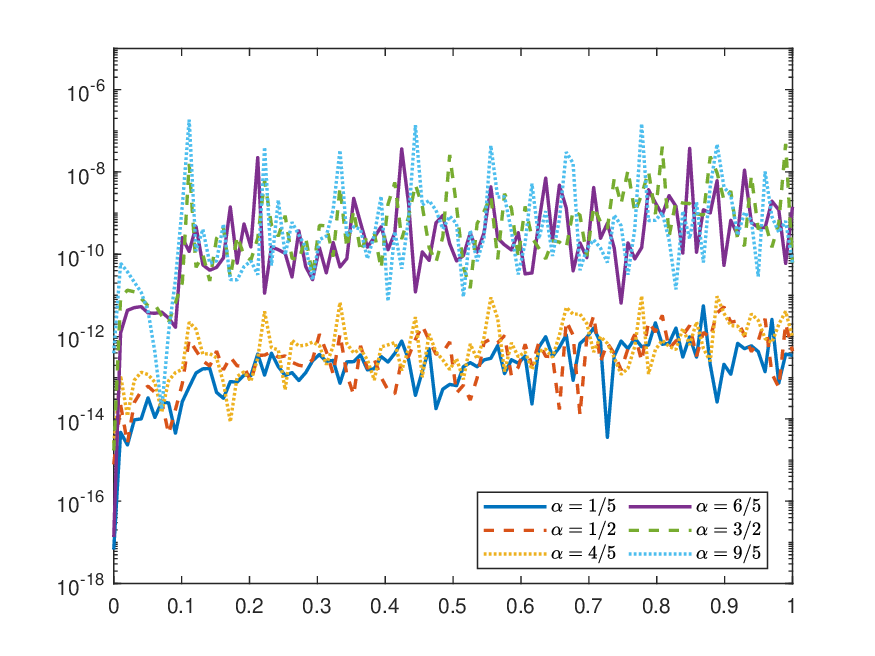}\\[-0.5ex] \scriptsize{Equispaced}}\hfill
    \parbox{.33\linewidth}{\centering
    \includegraphics[width=1\linewidth]{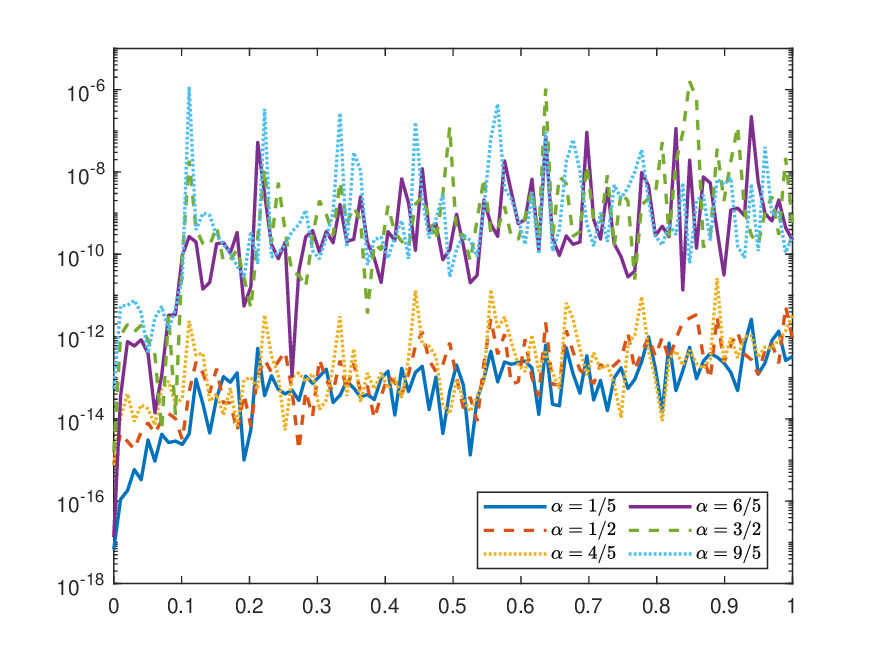}\\[-0.5ex]
     \scriptsize{Mixed E-C} }\hfill
     \parbox{.33\linewidth}{\centering
    \includegraphics[width=1\linewidth]{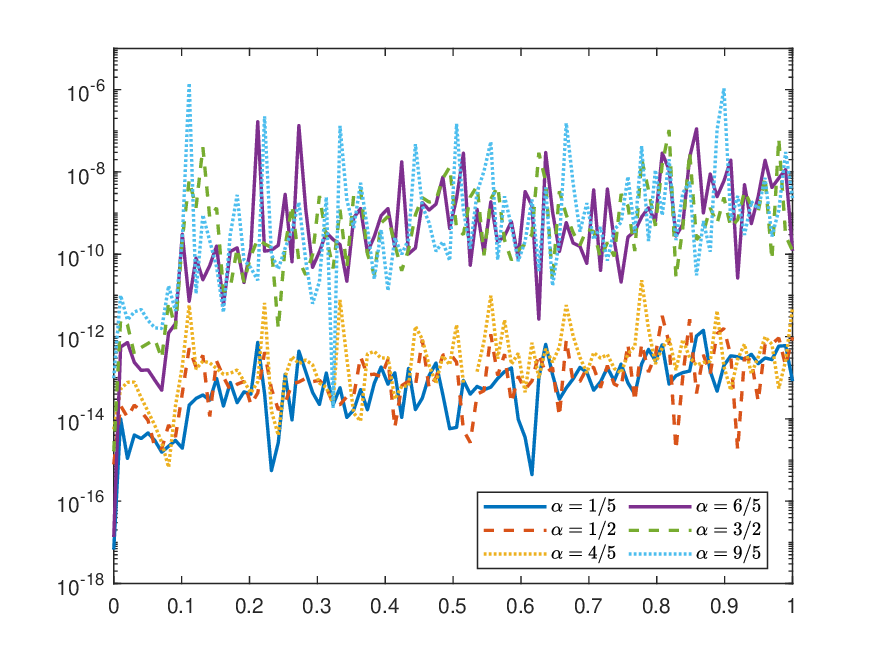}\\[-0.5ex]
    \scriptsize{Mixed E-MC}}
    \caption{Semilog plots of the pointwise absolute error for the Example \ref{example_3}.}
    \label{fig:pointwise_error_example3}
\end{figure}

\begin{figure}
    \centering
    \parbox{.33\linewidth}{\centering
    \includegraphics[width=1\linewidth]{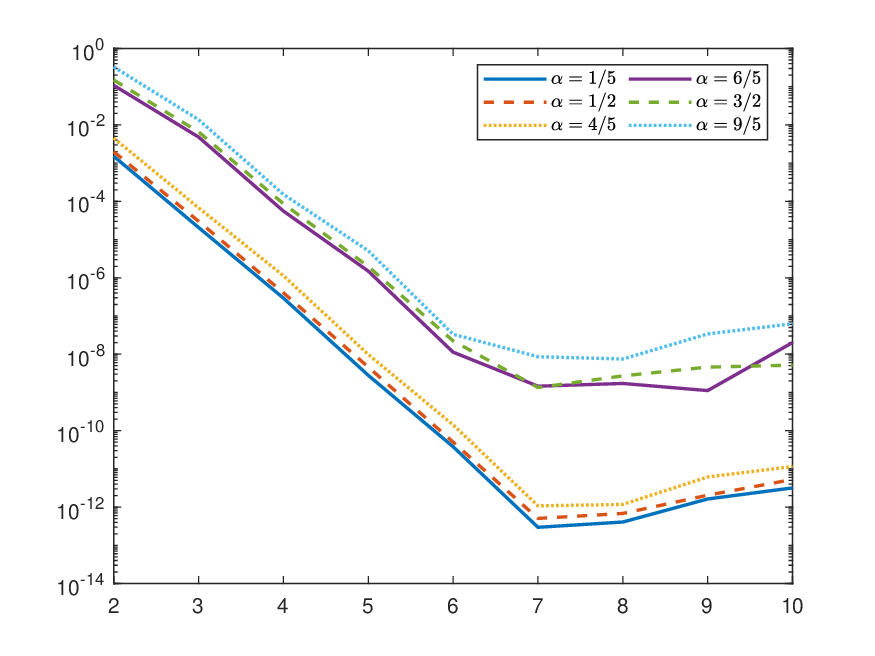}\\[-0.5ex] \scriptsize{Equispaced}}\hfill
    \parbox{.33\linewidth}{\centering
    \includegraphics[width=1\linewidth]{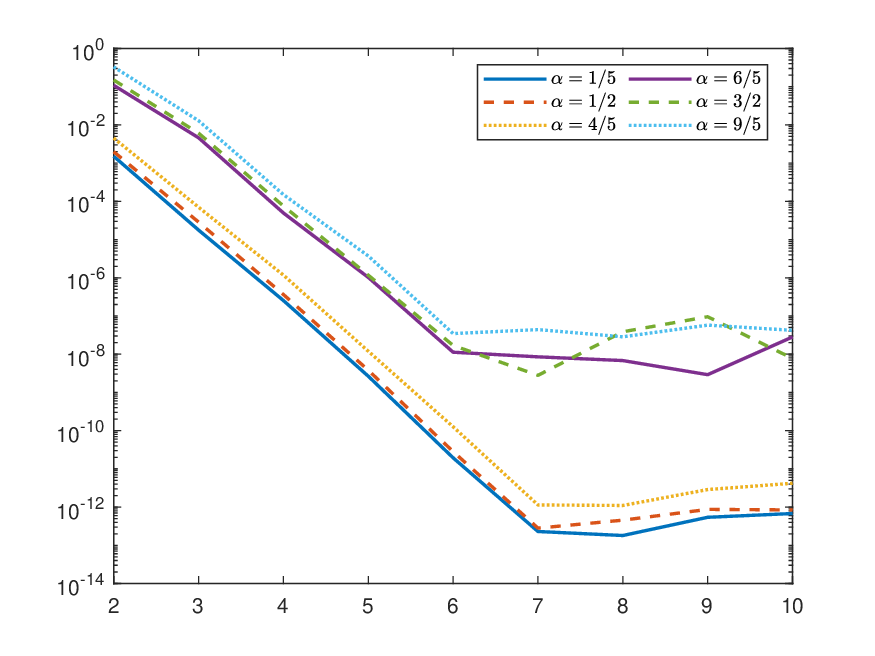}\\[-0.5ex]
     \scriptsize{Mixed E-C} }\hfill
     \parbox{.33\linewidth}{\centering
    \includegraphics[width=1\linewidth]{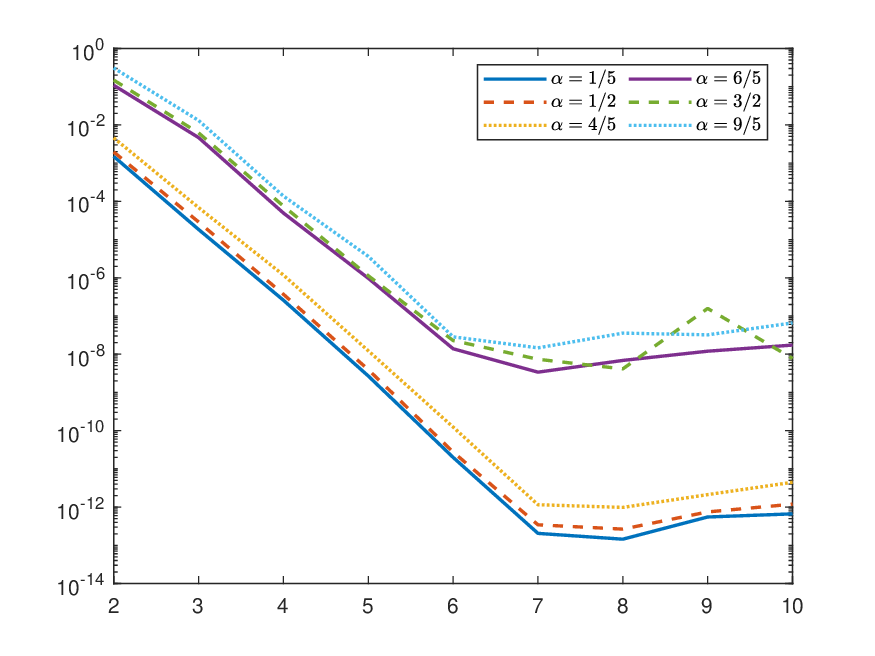}\\[-0.5ex]
     \scriptsize{Mixed E-MC} }  
    \caption{Semilog plots of the mean absolute error for the Example \ref{example_3}.}
    \label{fig:mean_error_example3}
\end{figure}

\subsection{Applications to the Bagley-Torvik BVPs} \label{section_BVP}
We test the performance of the proposed method in numerically solving Bagley-Torvik BVPs on well-known examples \cite{Fractional_derivative_BTE,BTE_Bessel}. 
The degree $d$ of the local polynomial interpolant and the cardinality of the node sets are specified in each example.
In particular, for the equispaced set of nodes of Section \ref{sec:nodes}, the values of $q=q(d,\alpha)$, for each fixed $d$ and $\alpha$, are computed by a trial and error procedure that allows us to obtain near the optimal approximation. These values are not reported for space reasons.

The approximations are computed at $N=100$ equispaced points $\xi_k$ in  $\left[0,1\right]$. In the examples, we display the semilog plot of the pointwise absolute error
\begin{equation}
    E(\xi_k)=\left\vert y(\xi_k)-\mathcal{M}_{4}[y](\xi_k) \right\vert, \quad k=1,\dots,100,
\end{equation}
or the semilog plot of the mean absolute error 
\begin{equation} \label{mean_error}
    E_{mean}=  \frac{1}{N} \displaystyle \sum_{k=1}^{N} E(\xi_k) ,
\end{equation}
by varying the degree $d$ of the local polynomial interpolant.
Moreover, for each example, we compute the condition number of the collocation matrix $\mathcal{A}$ and we refer to it as $\cond(\mathcal{A})$.

\begin{example}
\label{example_BVP_1}
   Let us consider the boundary value problem 
    \begin{equation*}
\left\{
\begin{array}{ll}
     y^{\prime \prime}(x)+(D^{\frac{3}{2}}y)(x)+ y(x)=h(x), & x\in \left(0,1\right), \\
     y(0)=1, & \\
     y(1) =2, &
\end{array}
\right.
\end{equation*}
with $h(x)=1+x$, whose exact solution is $y(x)=1+x$.

We fix $d=3$.
For the mixed E-C and mixed E-MC set of nodes, we set $n_e=3$.
For the equispaced nodes, we set $n=8$ and we realize the covering $\mathcal{F}$ by setting $q=2$.
In Figure \ref{BVP_1.1}, we display the semilog plot of the pointwise absolute error. Moreover, we have a mean absolute error of $9.85e-15$ by using the equispaced set of nodes, $4.19e-14$ by using the  mixed E-C set of nodes and $5.69e-14$ by using the  mixed E-MC set of nodes. The condition number of the collocation matrix $\mathcal{A}$ in \eqref{matrix_BVP} is $3.72e+1$, $1.28e+2$ and $1.17e+2$, respectively.

\end{example}

\begin{figure}
    \centering
    \parbox{.33\linewidth}{\centering
    \includegraphics[width=\linewidth]{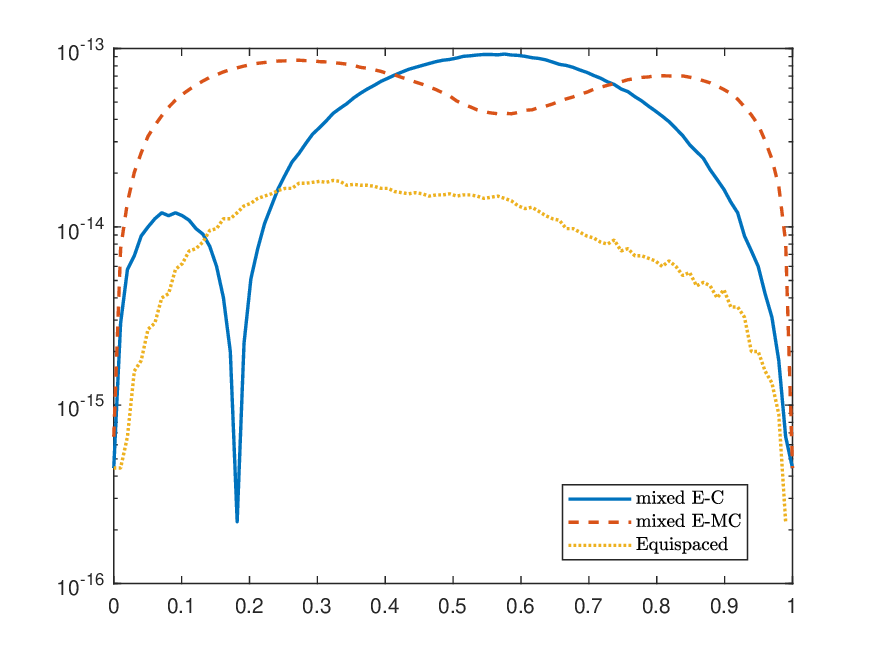}\\[-0.5ex]
    }
    \caption{Semilog plots of the pointwise errors for the Example \ref{example_BVP_1}.}
    \label{BVP_1.1}
\end{figure}

\begin{example}\label{example_BVP_2}
    Let us consider the boundary value problem
    \begin{equation*}
\left\{
\begin{array}{ll}
     y^{\prime \prime}(x)+\frac{8}{17}(D^{\frac{3}{2}}y)(x)+ \frac{13}{51}y(x)=h(x), & x\in \left(0,1\right), \\
     y(0)=0, & \\
     y(1) =0, &
\end{array}
\right.
\end{equation*}
where $h(x)=\frac{1}{89250 \sqrt{\pi}}\left(96 \sqrt{x} \, p(x)+7 \sqrt{\pi} \, q(x)\right)$, with $p(x)=-2373+10640x-16240x^2+8000x^3$ and $q(x)=-34578+233262x-448107x^2+264880x^3-9425x^4+3250x^5$.
The exact solution is $y(x)=\frac{27}{125}x-\frac{339}{250}x^2+\frac{76}{25}x^3-\frac{29}{10}x^4+x^5$.

We fix $d=6$.
For the mixed E-C and mixed E-MC set of nodes, we set $n_e=3$.
For the equispaced nodes, we set $n=13$ and we realize the covering $\mathcal{F}$ by setting $q=7$.
In Figure \ref{BVP_2.1}, we display the semilog plot of the pointwise absolute error. Moreover, we have a mean absolute error of $4.72e-17$ by using the equispaced set of nodes, $5.82e-17$ by using the  mixed E-C set of nodes and $3.93e-16$ by using the  mixed E-MC set of nodes. The condition number of the collocation matrix $\mathcal{A}$ in \eqref{matrix_BVP} is $4.22e+3$, $1.98e+3$ and $1.71e+3$, respectively.
\end{example}

\begin{figure}
	\centering
	\parbox{.33\linewidth}{\centering
		\includegraphics[width=\linewidth]{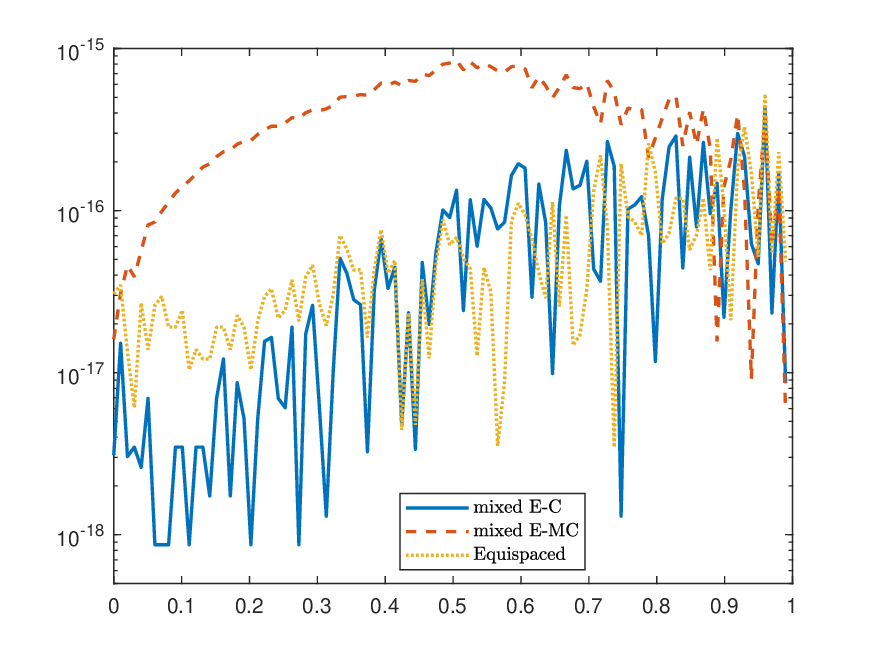}\\[-0.5ex]
		} \hfill
	
	\caption{Semilog plots of the pointwise error for the Example \ref{example_BVP_2}.}
	\label{BVP_2.1}
\end{figure}

\begin{example} \label{example_BVP_3}
   Let us consider the boundary value problem
    \begin{equation*}
\left\{
\begin{array}{ll}
     (D^{\frac{3}{2}}y)(x)+ y(x)=h(x), & x\in \left(0,1\right), \\
     y(0)=0, & \\
     y(1)=0, &
\end{array}
\right.
\end{equation*}
with $h(x)=2\frac{ \displaystyle x^{\frac{1}{2}}}{ \Gamma{ \left(\frac{3}{2}\right)}}+x^2-x$.
The exact solution of the BVP is $y(x)=x^2-x$.

We fix $d=3$.
For the mixed E-C and mixed E-MC set of nodes we set $n_e=3$.
For the equispaced nodes, we set $n=7$ and the covering $\mathcal{F}$ is realized with $q=2$.
In Figure \ref{BVP_3.1}, we display the semilog plot of the pointwise absolute error. Moreover, we have a mean absolute error of $8.30e-15$ with the equispaced set of nodes, $1.21e-13$ with the  mixed E-C set of nodes and $5.06e-14$ with the  mixed E-MC set of nodes. The condition number of the collocation matrix $\mathcal{A}$ in \eqref{matrix_BVP} is $2.35e+1$, $9.68e+2$ and $4.18e+2$, respectively.
\end{example}

\begin{figure}
	\centering
	\parbox{.33\linewidth}{\centering
		\includegraphics[width=\linewidth]{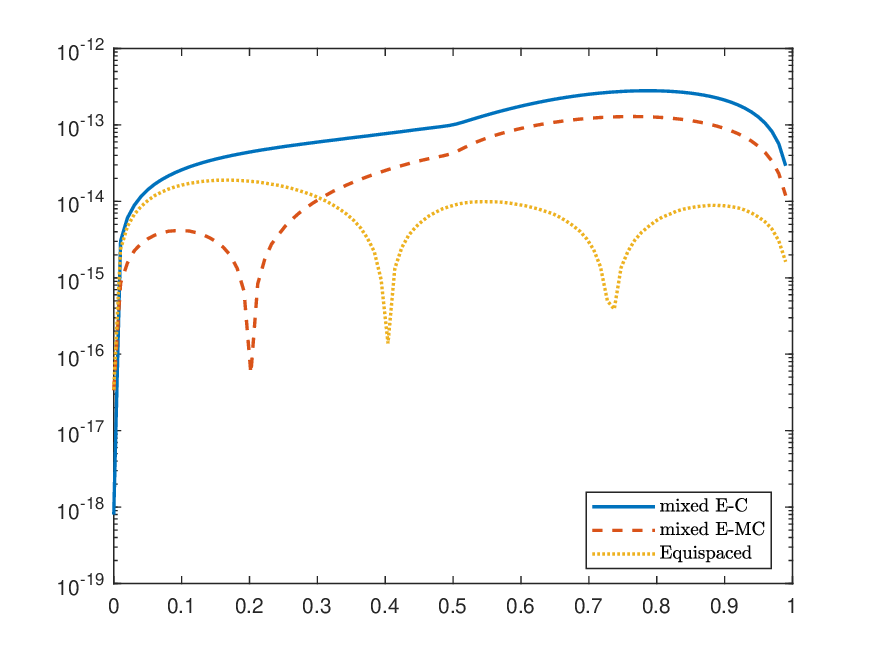}\\[-0.5ex]}
	\caption{Semilog plots of the pointwise error for the Example \ref{example_BVP_3}.}
	\label{BVP_3.1}
\end{figure}

\begin{example} \label{example_BVP_5}
    Consider the boundary value problem
    \begin{equation*}
\left\{
\begin{array}{ll}
     (D^{\frac{3}{2}}y)(x)+\frac{e^{-3\pi}}{\sqrt{\pi}}y(x)=h(x), & x\in \left(0,1\right), \\
     y(0)=0, & \\
     y(1)=-\frac{1}{40}, &
\end{array}
\right.
\end{equation*}
with $h(x)= \frac{e^{-3\pi}}{40\sqrt{\pi}}x^2(40x^3-74x+33)+\frac{\displaystyle \sqrt{x}}{70\sqrt{\pi}}(1280x^3-1036x+231)$.
The exact solution is $y(x)=\left(x^3-\frac{37}{20}x+ \frac{33}{40}\right)x^2$.

We fix $d=6$.
For the mixed E-C and mixed E-MC set of nodes we set $n_e=3$.
For the equispaced nodes, we set $n=8$ and realize the cover $\mathcal{F}$ with $q=2$.
In Figure \ref{BVP_5.1}, we display the semilog plot of the pointwise absolute error.
Moreover, we have a mean absolute error of $8.43e-16$ with the equispaced set of nodes, $4.05-14$ with the  mixed E-C set of nodes and $1.52e-14$ with the  mixed E-MC set of nodes. The condition number of the collocation matrix $\mathcal{A}$ in \eqref{matrix_BVP} is $8.87e+1$, $1.19e+3$ and $1.22e+3$, respectively.

\end{example}

\begin{figure}
	\centering
	\parbox{.33\linewidth}{\centering
		\includegraphics[width=\linewidth]{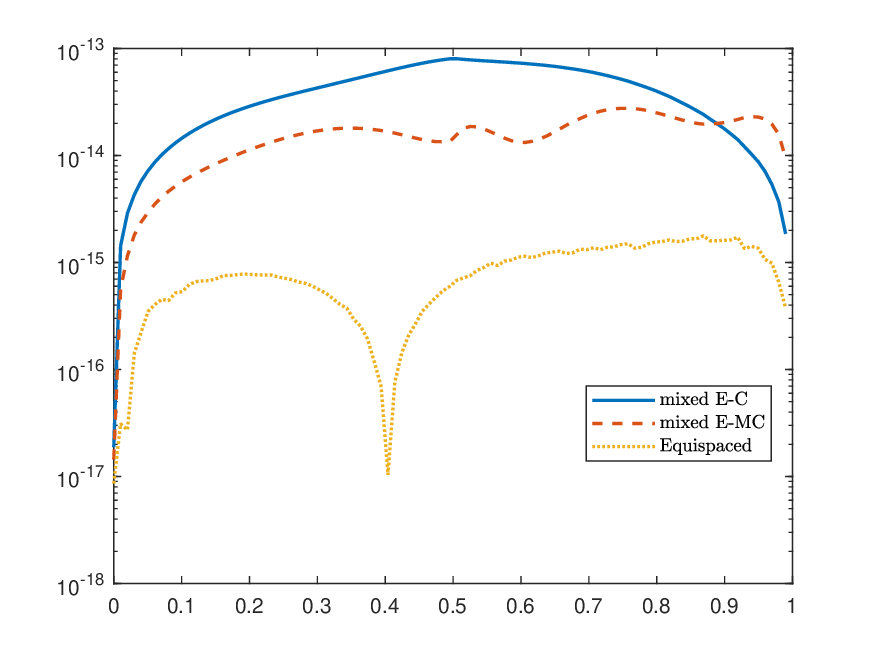}\\[-0.5ex]} 
	\caption{Semilog plots of the pointwise error for the Example \ref{example_BVP_5}. }
	\label{BVP_5.1}
\end{figure}

We highlight that the BVPs considered in Example \ref{example_BVP_1}-\ref{example_BVP_5} have an exact polynomial solution.
The obtained numerical results confirm the theoretical results of Theorem \ref{Theo1} and \ref{Theo2} on the reproduction property of the multinode Shepard operator when applied to polynomial functions.
In fact, in these cases, the error is on the order of machine error independently of the set of nodes.

The following two examples concern BVP problems with no exact polynomial solution.

\begin{example}  \label{example_BVP_4}
    Consider the boundary value problem
    \begin{equation*}
\left\{
\begin{array}{ll}
     (D^{\alpha}y)(x)+ \frac{3}{57}y(x)=h(x), & x\in \left(0,1\right), \\
     y(0)=0, & \\
     y(1)=\frac{1}{\Gamma {\left( \alpha+1\right)}}, &
\end{array}
\right.
\end{equation*}
where $1<\alpha<2$ and $h(x)= (\alpha+1)x+ \frac{3}{57}\frac{ \displaystyle x^{\alpha+1}}{\Gamma{(\alpha+1)}}$.
The exact solution is $y(x)=\frac{x^{\alpha+1}}{\Gamma{\left(\alpha +1\right)}}$.

We set $\alpha \in \left\{1.2, 1.4, 1.6,1.8\right\}$ and we vary $d$ from $3$ to $12$.
We set $n_e=3$ for the mixed E-C and mixed E-MC sets of nodes and $n=100$ for the equispaced set of nodes. 
In Figure \ref{BVP_4.3}, we display the semilog plot of the mean absolute error, and in Figure \ref{fig:cond_BVP_4} the condition number of the matrix $\mathcal{A}$ in \eqref{matrix_BVP}.
\end{example}

\begin{figure}
	\centering
         \parbox{.32\linewidth}{\centering
    \includegraphics[width=\linewidth]{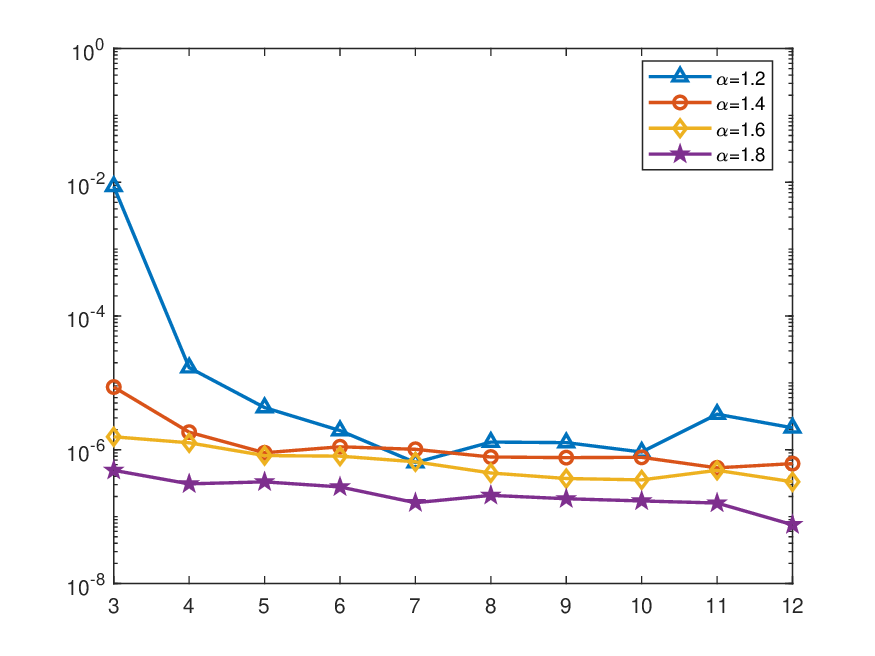}\\[-0.5ex] \scriptsize{Equispaced }}
			\parbox{.32\linewidth}{\centering
			\includegraphics[width=\linewidth]{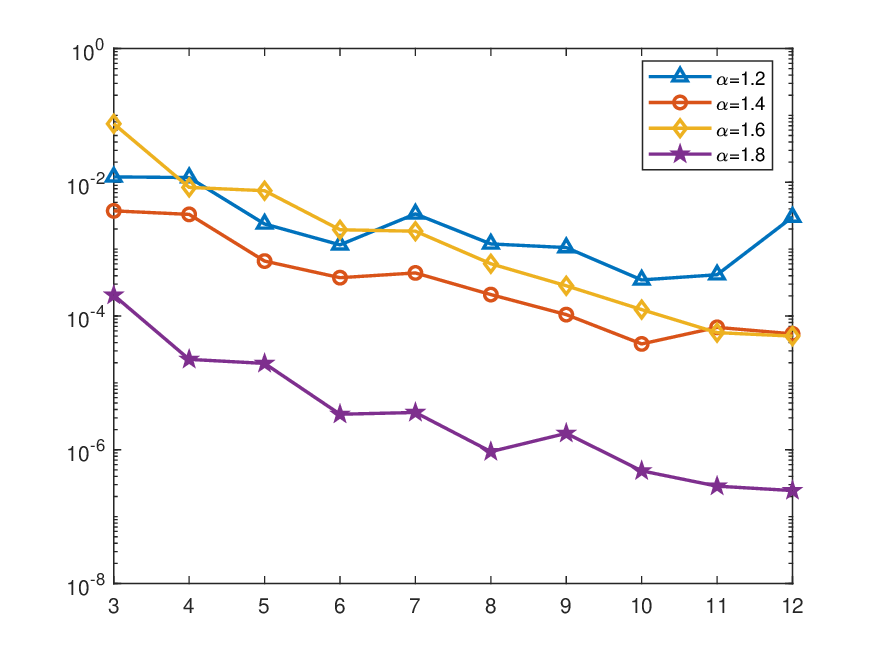}\\[-0.5ex]
			\scriptsize{Mixed E-C}}
		\parbox{.32\linewidth}{\centering
			\includegraphics[width=\linewidth]{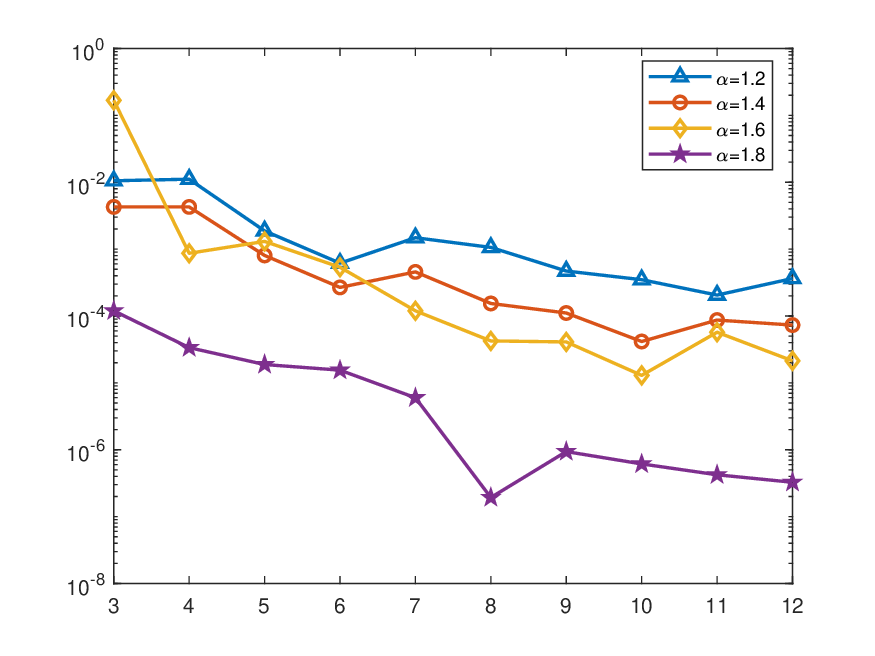}\\[-0.5ex]
			\scriptsize{Mixed E-MC}}
		\caption{Semilog plots of the mean absolute error for the Example \ref{example_BVP_4}.}
		\label{BVP_4.3}
	\end{figure}

\begin{figure}
	\centering
	\parbox{.33\linewidth}{\centering
		\includegraphics[width=\linewidth]{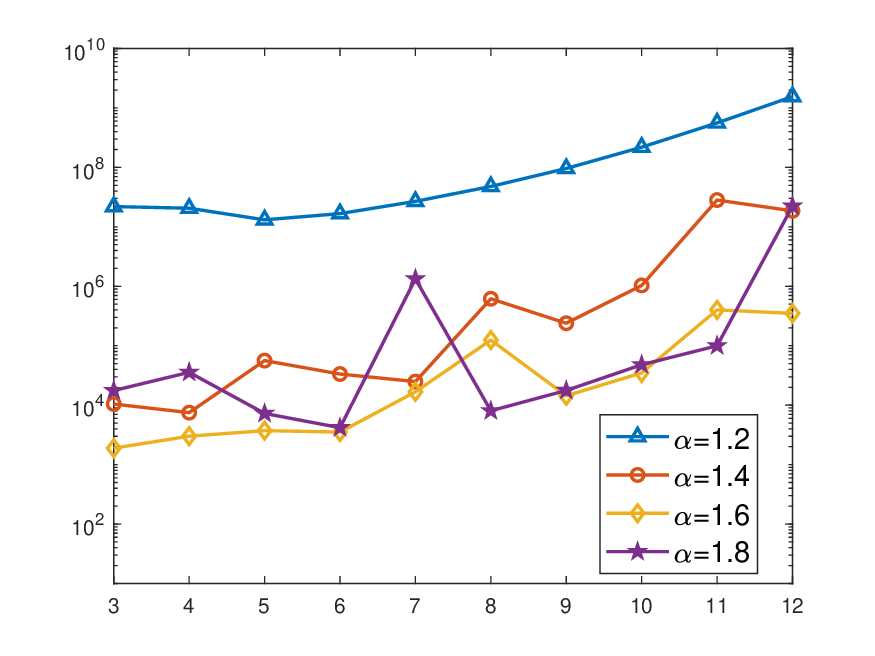}\\[-0.5ex] \scriptsize{Equispaced}}\hfill		
	\parbox{.33\linewidth}{\centering
		\includegraphics[width=\linewidth]{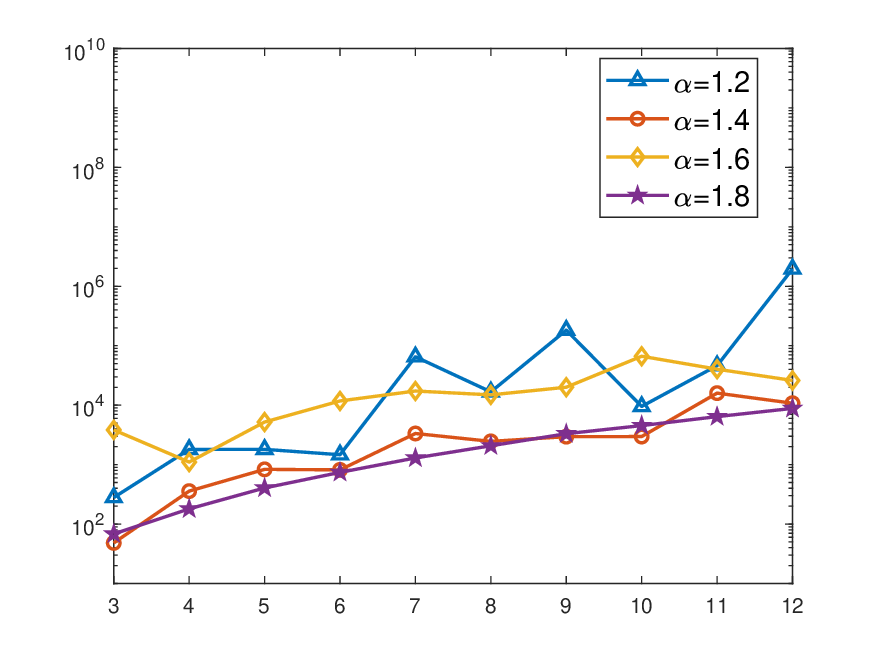}\\[-0.5ex] \scriptsize{Mixed E-C}}\hfill
	\parbox{.33\linewidth}{\centering
		\includegraphics[width=\linewidth]{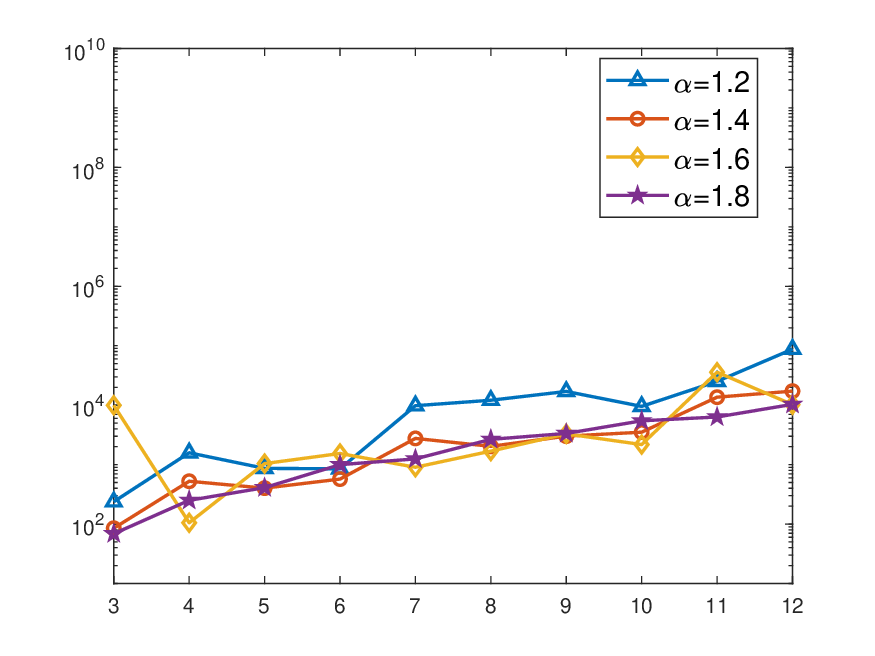}\\[-0.5ex] \scriptsize{Mixed E-MC} }
	\caption{Condition number of the matrix $\mathcal{A}$ in \eqref{matrix_BVP}  for the Example \ref{example_BVP_4}.}
	\label{fig:cond_BVP_4}
\end{figure}

\begin{example} \label{example_BVP_6}
    Consider the boundary value problem
    \begin{equation*}
\left\{
\begin{array}{ll}
     (D^{\frac{3}{2}}y)(x)+\sigma(x)y(x)=h(x), & x\in \left(0,1\right), \\
     y(0)=0, & \\
     y(1)=\frac{1}{4}, &
\end{array}
\right.
\end{equation*}
where $\sigma(x)=-\frac{(1-x)}{(1+x)^2}$ and $h(x)= \frac{x-1}{(1+x)^4 }+\frac{\sqrt{x}\sqrt{1+x}(33+26x+8x^2)+15\log{\left(\sqrt{x}+\sqrt{x+1}\right)}}{4\sqrt{\pi}(1+x)^\frac{7}{2}}$.
The exact solution is $y(x)=\frac{1}{(1+x)^2}$.

We set $\alpha=1.5$ and we vary $d$ from $3$ to $11$.
We set $n_e=6,12,18$ for the mixed E-C and mixed E-MC sets of nodes and $n=40,80,120$ for the equispaced set of nodes.
In Figure \ref{fig:BVP_6}, we show the semilog plot of the mean absolute error, and in Figure \ref{fig:cond_BVP_6} the condition number of the matrix $\mathcal{A}$ in \eqref{matrix_BVP}.

\end{example}

\begin{figure}
    \centering
    \parbox{.33\linewidth}{\centering
\includegraphics[width=\linewidth]{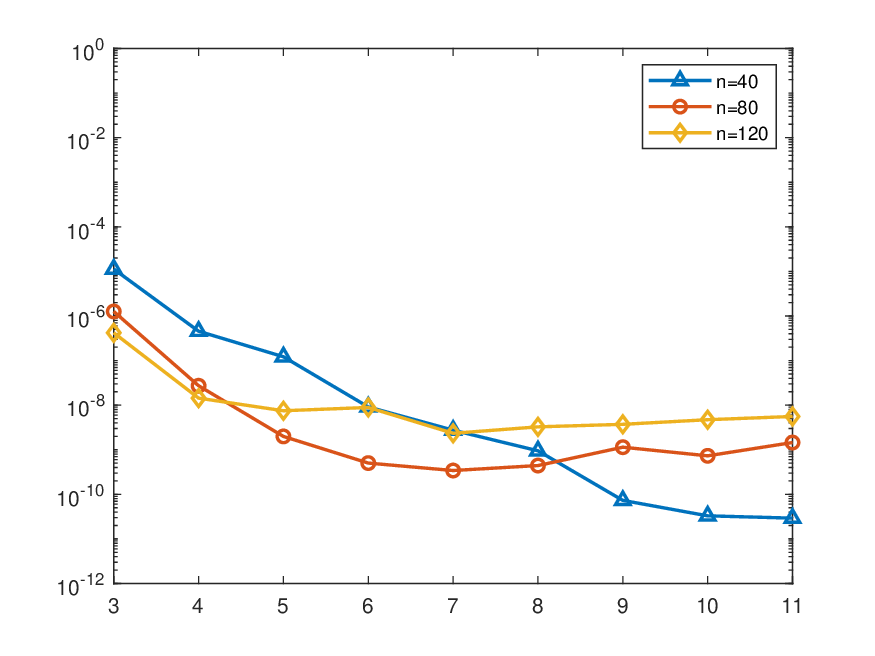}\\[-0.5ex]
    \scriptsize{Equispaced}} \hfill
    \parbox{.33\linewidth}{\centering
    \includegraphics[width=\linewidth]{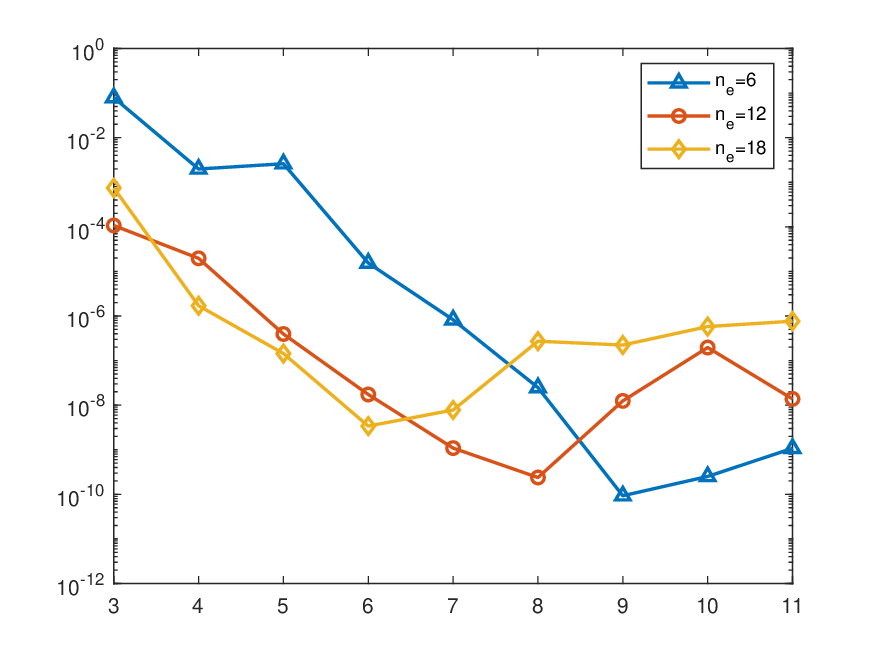}\\[-0.5ex] \scriptsize{Mixed E-C}}\hfill
    \parbox{.33\linewidth}{\centering
    \includegraphics[width=\linewidth]{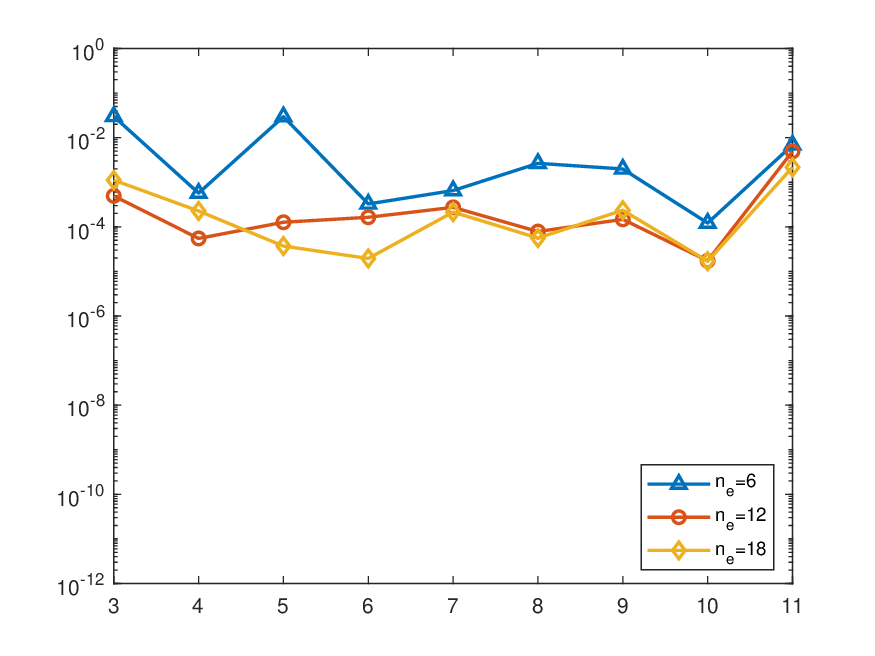}\\[-0.5ex] \scriptsize{Mixed E-MC} }
    \caption{Semilog plots of the mean absolute error for the Example \ref{example_BVP_6}.}
    \label{fig:BVP_6}
\end{figure}

\begin{figure}
	\centering
	\parbox{.33\linewidth}{\centering
		\includegraphics[width=\linewidth]{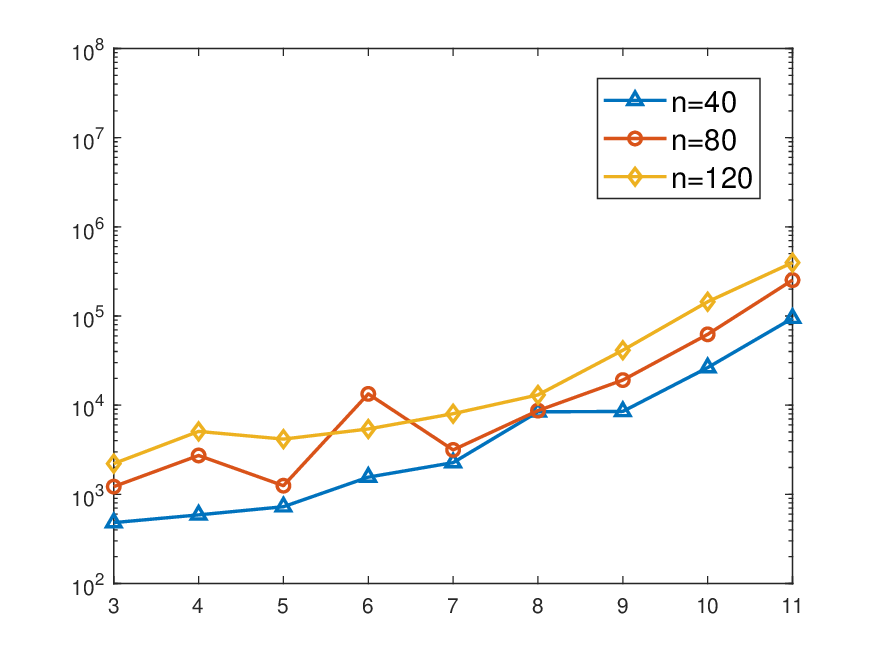}\\[-0.5ex] \scriptsize{Equispaced}}\hfill		
	\parbox{.33\linewidth}{\centering
		\includegraphics[width=\linewidth]{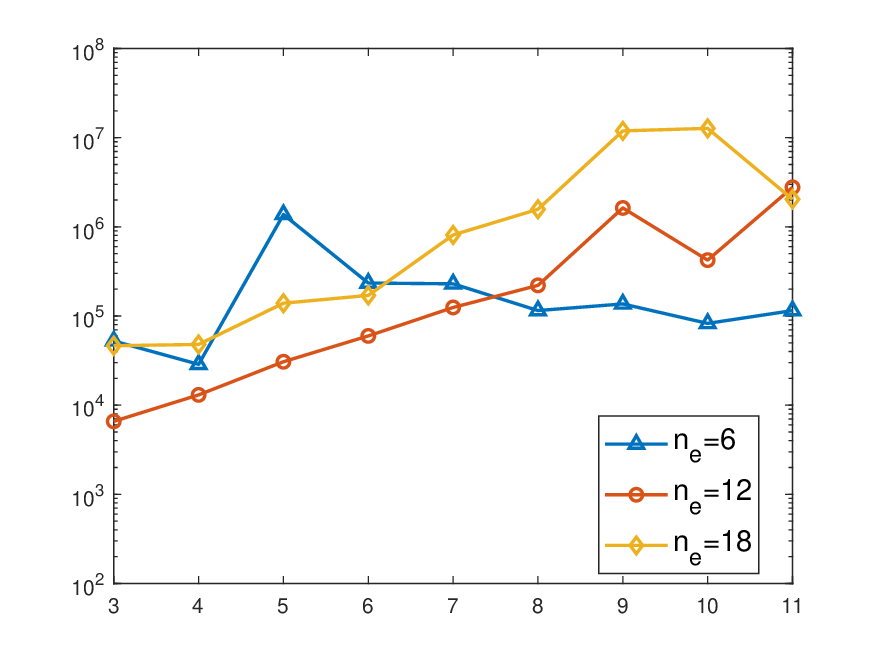}\\[-0.5ex] \scriptsize{Mixed E-C}}\hfill
	\parbox{.33\linewidth}{\centering
		\includegraphics[width=\linewidth]{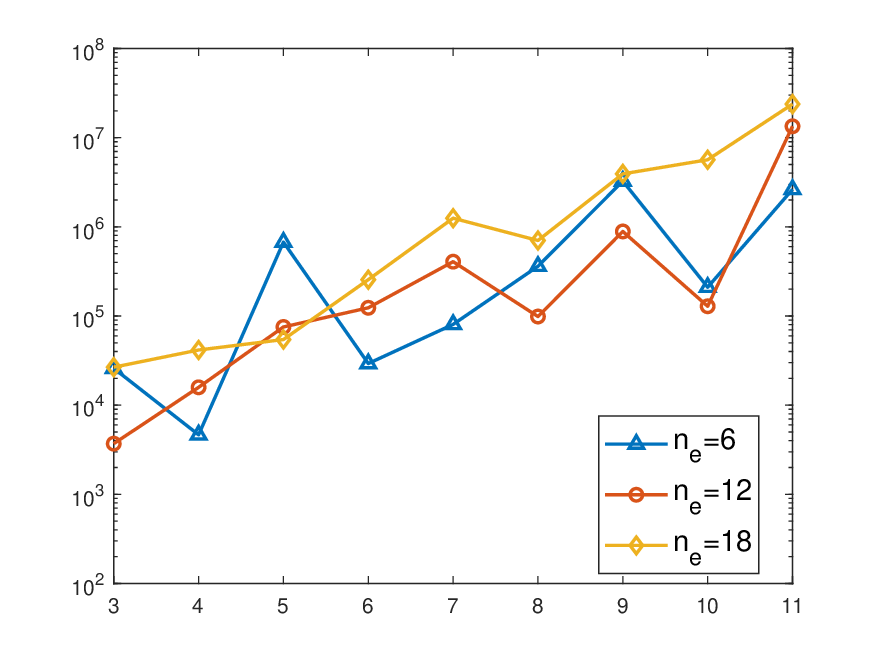}\\[-0.5ex] \scriptsize{Mixed E-MC} }
	\caption{Condition number of the matrix $\mathcal{A}$ in \eqref{matrix_BVP} for the Example \ref{example_BVP_6}.}
	\label{fig:cond_BVP_6}
\end{figure}

We notice that using sets of equispaced nodes, in Examples \ref{example_BVP_4}-\ref{example_BVP_6},  results in a lower approximation error for lower-degree polynomials. The approximation error remains comparable across all sets of nodes for higher-degree polynomials.

\subsection{Applications to the Bagley-Torvik IVPs}
We test the performance of the proposed method in numerically solving Bagley-Torvik IVPs on well-known examples \cite{Application_BTE,Fractional_derivative_legendre}.
The degree of the local polynomial interpolant and the cardinality of the node sets are specified in each example. In all cases, we compute the mean absolute error by varying both the degree of the local polynomial interpolant and the node set.
Moreover, we analyze the condition number of the collocation matrix.

\begin{example}
\label{example_IVP1}
    Consider the initial value problem
\begin{equation*}
\left\{
\begin{array}{ll}
     y^{\prime \prime}(x)+\frac{1}{100}(D^{\frac{3}{2}}y)(x)+\frac{1}{200}y(x)=h(x), & x\in \left(0,1\right], \\
     y(0)=0, & \\
     y^{\prime}(0)=0, &
\end{array}
\right.
\end{equation*}
with $h(x)=  \frac{1}{200}x^{\frac{5}{2}}+ \frac{3}{40\sqrt{\pi}}x + \frac{15}{4} \sqrt{x}$.
The exact solution is $y(x)=x^{\frac{5}{2}}$.

We vary $d$ from $3$ to $20$ and we fix $n=40,80,160$ for the equispaced set of nodes 
and  $n_e=14,21,28$ for the mixed E-C and mixed E-MC sets of nodes. In this example, we choose $n_s=3(d+1)+5$ for the mixed E-MC set of nodes. The numerical results are reported in Figure \ref{IVP_1.2} and in Figure \ref{fig:cond_IVP_1}.
\end{example}

\begin{example}
\label{example_IVP2}
     Consider the initial value problem
    \begin{equation*}
\left\{
\begin{array}{ll}
     y^{\prime \prime}(x)+\frac{1}{2}(D^{\frac{3}{2}}y)(x)+\frac{1}{2}y(x)=h(x), & x\in \left(0,1\right], \\
     y(0)=1, & \\
     y^{\prime}(0)=1, &
\end{array}
\right.
\end{equation*}
with $h(x)= \frac{3}{2}e^{x}+\frac{D(\sqrt{-x})}{\sqrt{-\pi}} $ and $D(x)$ being the Dawson function defined as
\begin{equation*}
    D(x)= e^{-x^2} \int_{0}^{x}e^{t^2} \, dt.
\end{equation*}
The exact solution is $y(x)=e^{x}$.

We vary $d$ from $3$ to $20$ 
and we fix $n=20,40,80$ for the equispaced set of nodes 
and  $n_e=3,13,23$ for the mixed E-C and mixed E-MC sets of nodes. 
The numerical results are reported in Figure 
\ref{IVP_2.2} and in Figure \ref{fig:cond_IVP_2}.
\end{example}

\begin{example}
\label{Example_IVP3}
     Consider the initial value problem
    \begin{equation*}
\left\{
\begin{array}{ll}
     y^{\prime \prime}(x)+(D^{\frac{3}{2}}y)(x)+y(x)=h(x), & x\in \left(0,1\right], \\
     y(0)=0, & \\
     y^{\prime}(0)=\omega, & \omega \in \mathbb{R},\\
\end{array}
\right.
\end{equation*}
where
\begin{align*}
h(x) &= \sin(\omega x) - \omega^2 \sin(\omega x)  + \sqrt{2} \omega^{3/2} 
\left(
\cos(\omega x) S\left(\sqrt{\frac{2}{\pi}} (\omega x)^{1/2} \right) 
\right. \\
&\quad \left.
- \sin(\omega x) C\left(\sqrt{\frac{2}{\pi}} (\omega x)^{1/2} \right)
\right)
\end{align*}
with $S(x)$ and $C(x)$ being the Fresnel functions defined as
\begin{equation*}    
S(x) = \int_{0}^{x} \sin{\left(t^2 \right)} \, dt, \quad  \quad C(x) = \int_{0}^{x} \cos{\left(t^2 \right)} \, dt.   
\end{equation*}
The exact solution is $y(x)=\sin{(\omega x})$.

We fix $\omega\in \left\{1, 2\pi, 4\pi\right\}$, we vary $d$ from $3$ to $11$, and we perform two experiments. In the first experiment, we fix $n=20,40,80,160$ for the equispaced set of nodes and we report the numerical results in Figure \ref{IVP_3.2} and Figure \ref{fig:cond_IVP_3_eq}.
In the second experiment, we consider $n_e=3$ for $\omega=1$, $n_e=5$ for $\omega=2\pi$, and $n_e=9$ for $\omega=4\pi$ for the mixed E-C and mixed E-MC sets of nodes and $n=d(n_e-1)+1$ for the equispaced set of nodes.
The numerical results are reported in Figure  \ref{IVP_3.5} and in Figure \ref{fig:cond_IVP_3}.
\end{example}

\begin{figure}
    \centering
    \parbox{.33\linewidth}{\centering
    \includegraphics[width=\linewidth]{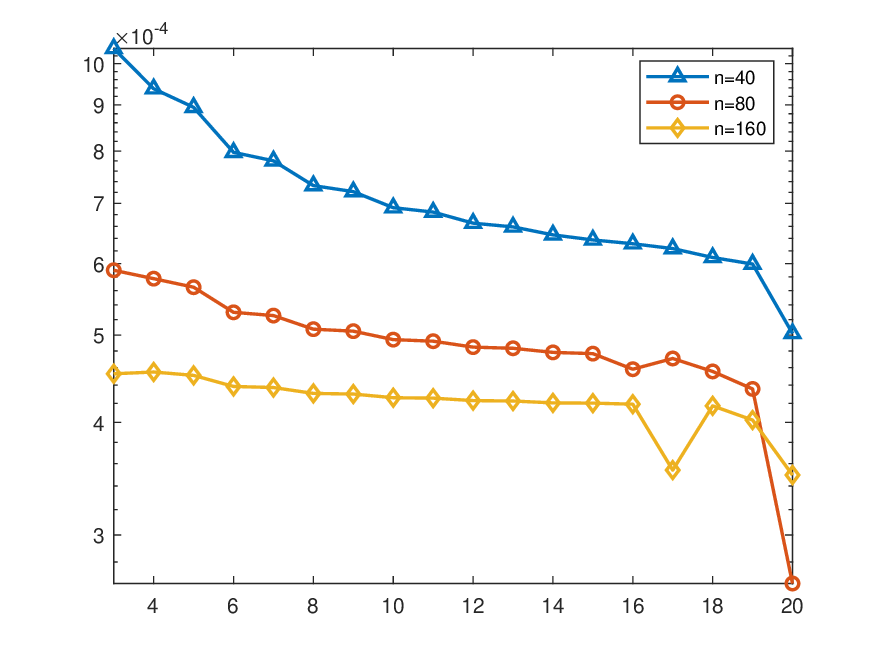}\\[-0.5ex]
    \scriptsize{Equispaced}}\hfill
    \parbox{.33\linewidth}{\centering
    \includegraphics[width=\linewidth]{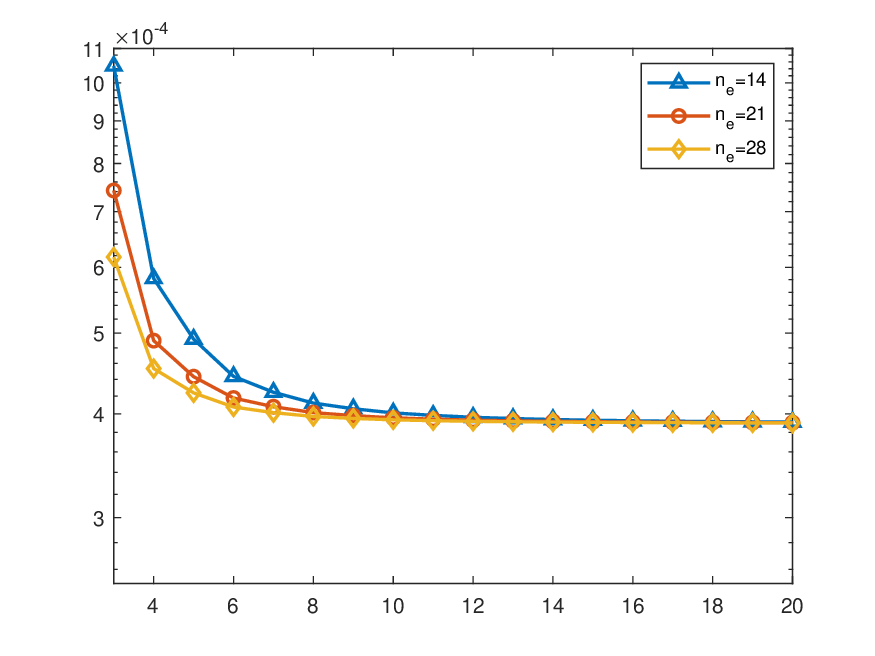}\\[-0.5ex]
    	\scriptsize{Mixed E-C}} \hfill
    \parbox{.33\linewidth}{\centering
    \includegraphics[width=\linewidth]{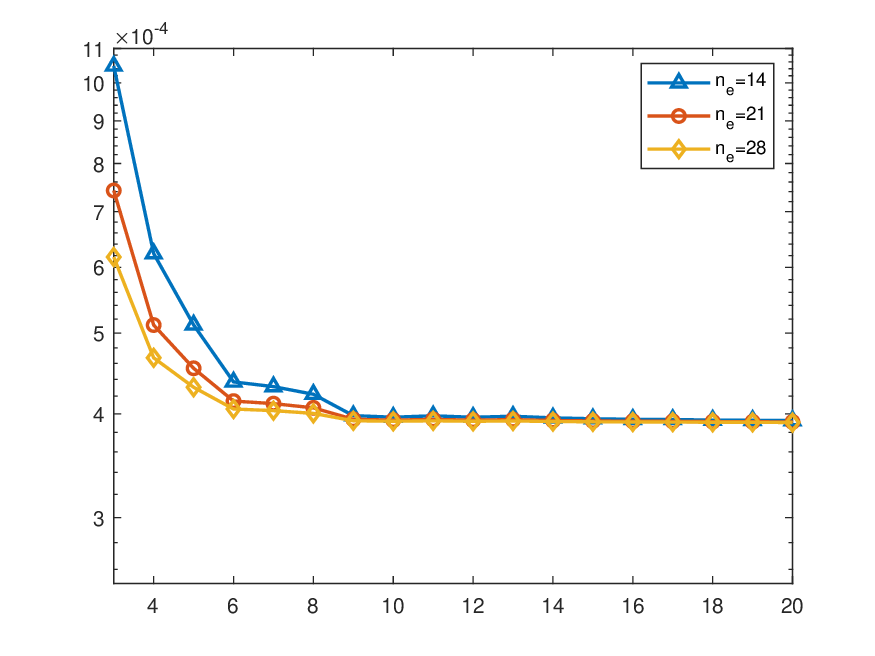}\\[-0.5ex]
    		\scriptsize{Mixed E-MC}}
    \caption{Semilog plots of the mean absolute error for the Example \ref{example_IVP1}.}
    \label{IVP_1.2}
\end{figure}

\begin{figure}
	\centering
	\parbox{.33\linewidth}{\centering
		\includegraphics[width=\linewidth]{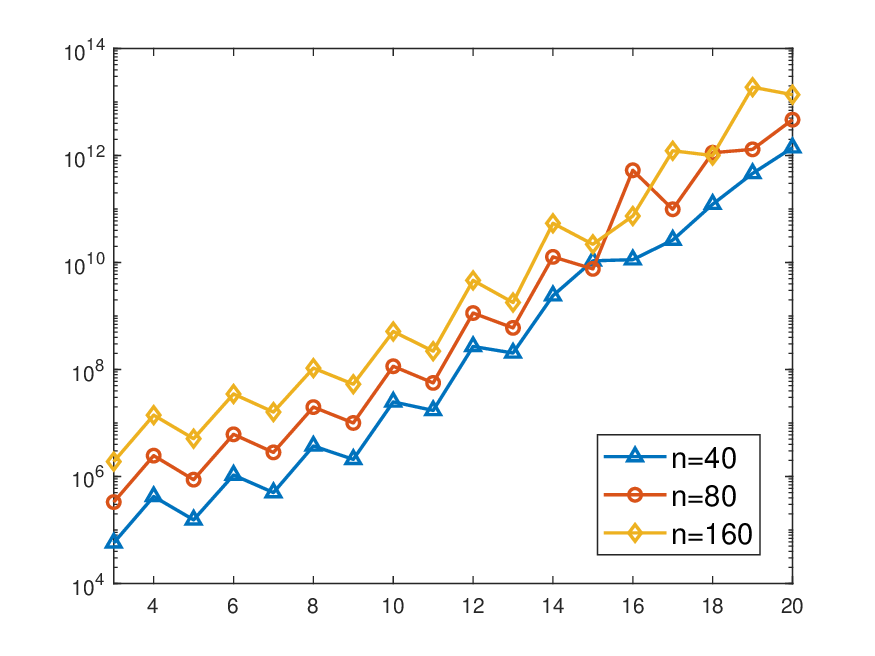}\\[-0.5ex] \scriptsize{Equispaced}}\hfill		
	\parbox{.33\linewidth}{\centering
		\includegraphics[width=\linewidth]{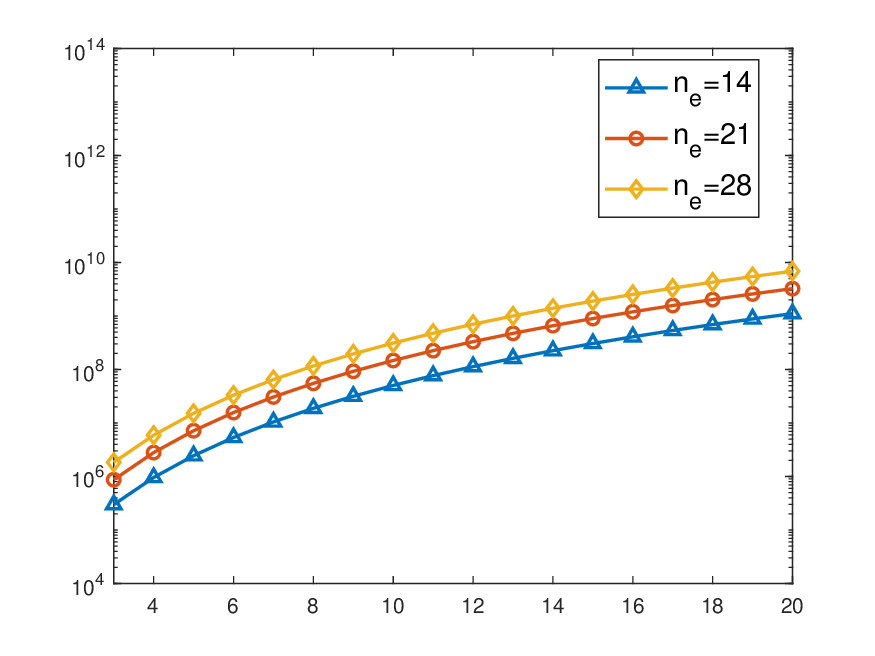}\\[-0.5ex] \scriptsize{Mixed E-C}}\hfill
	\parbox{.33\linewidth}{\centering
		\includegraphics[width=\linewidth]{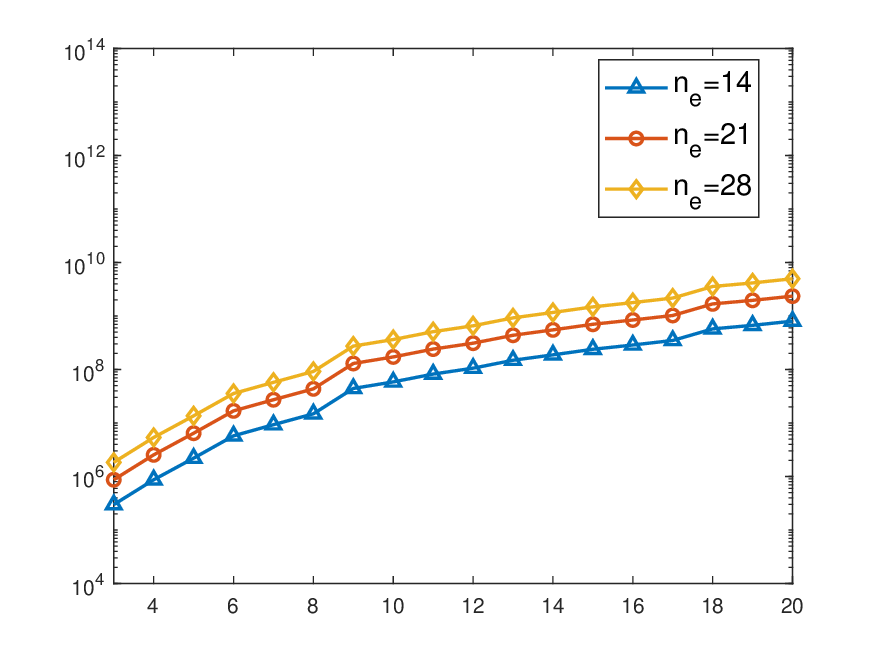}\\[-0.5ex] \scriptsize{Mixed E-MC} }
	\caption{Condition number of the matrix $\mathcal{A}$ in \eqref{matrix_IVP} for the Example \ref{example_IVP1}.}
	\label{fig:cond_IVP_1}
\end{figure}

\begin{figure}
    \centering
    \parbox{.33\linewidth}{\centering
    \includegraphics[width=\linewidth]{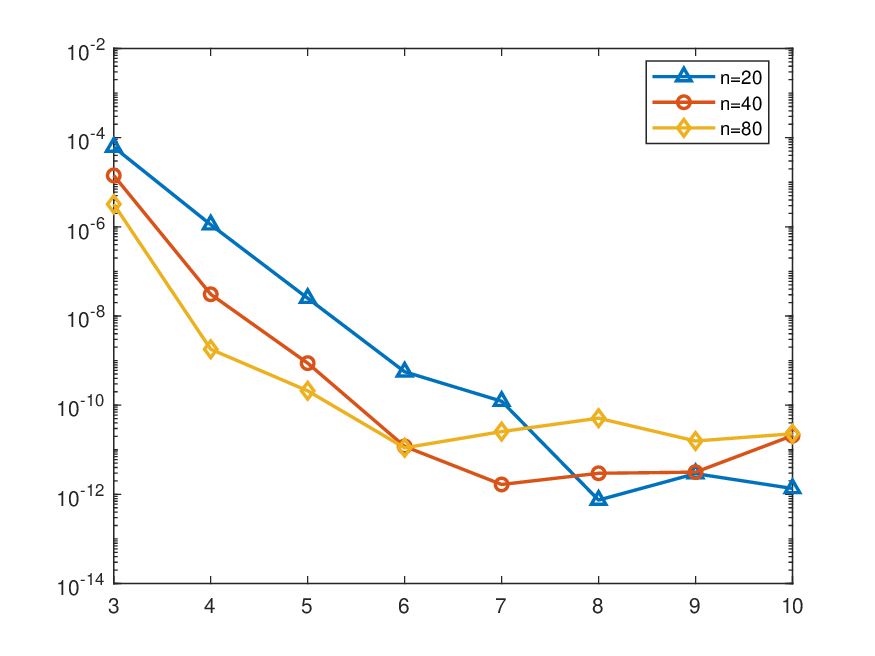}\\[-0.5ex]
    \scriptsize{Equispaced}}\hfill
    \parbox{.33\linewidth}{\centering
    \includegraphics[width=\linewidth]{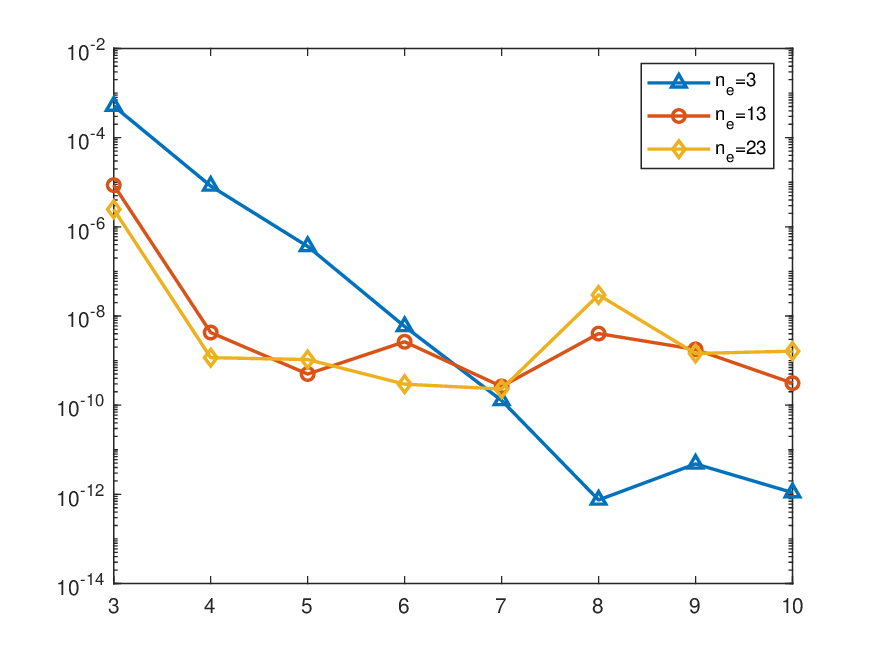}\\[-0.5ex]
    	\scriptsize{Mixed E-C}} \hfill
    \parbox{.33\linewidth}{\centering
    \includegraphics[width=\linewidth]{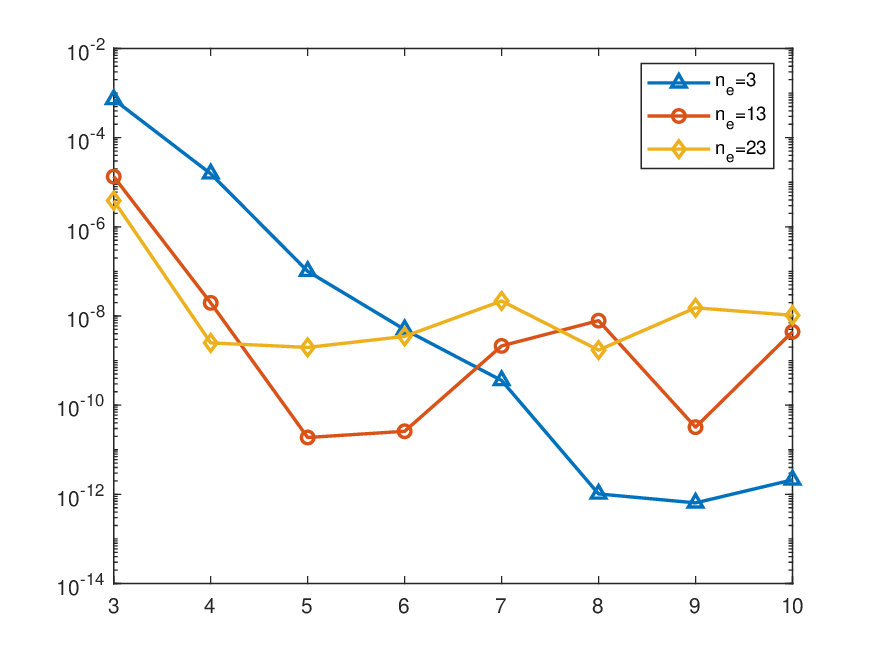}\\[-0.5ex]
    	\scriptsize{Mixed E-MC}}
    \caption{Semilog plots of the mean absolute error for the Example \ref{example_IVP2}.}
    \label{IVP_2.2}
\end{figure}

\begin{figure}
	\centering
	\parbox{.33\linewidth}{\centering
		\includegraphics[width=1\linewidth]{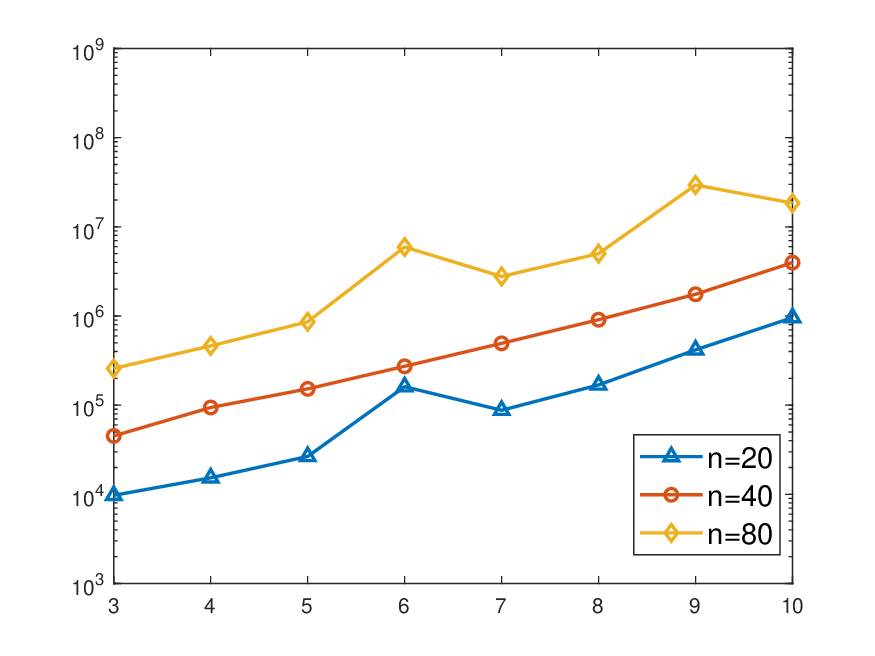}\\[-0.5ex] \scriptsize{Equispaced}}\hfill		
	\parbox{.33\linewidth}{\centering
		\includegraphics[width=1\linewidth]{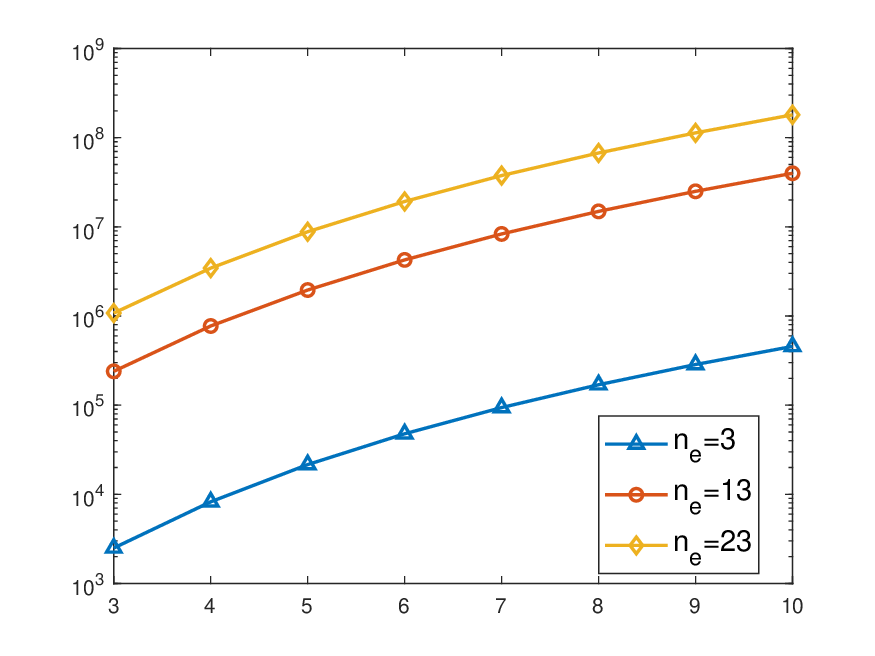}\\[-0.5ex] \scriptsize{Mixed E-C}}\hfill
	\parbox{.33\linewidth}{\centering
		\includegraphics[width=1\linewidth]{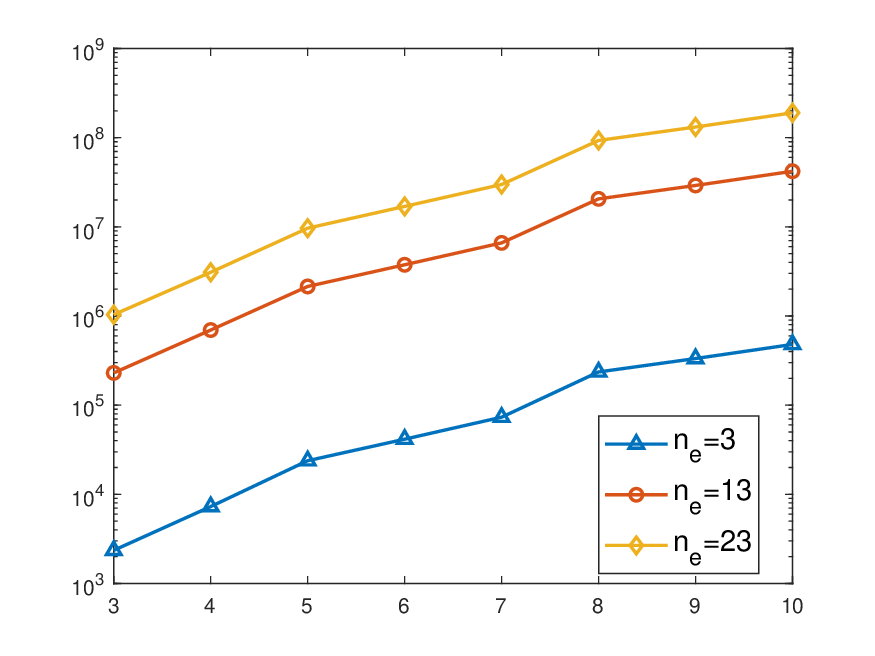}\\[-0.5ex] \scriptsize{Mixed E-MC} }
	\caption{Condition number of the matrix $\mathcal{A}$ in \eqref{matrix_IVP} for the Example \ref{example_IVP2}.}
	\label{fig:cond_IVP_2}
\end{figure}

\begin{figure}
\begin{center}
	\begin{minipage}{0.32\textwidth}
	\centering
			\includegraphics[width=\linewidth]{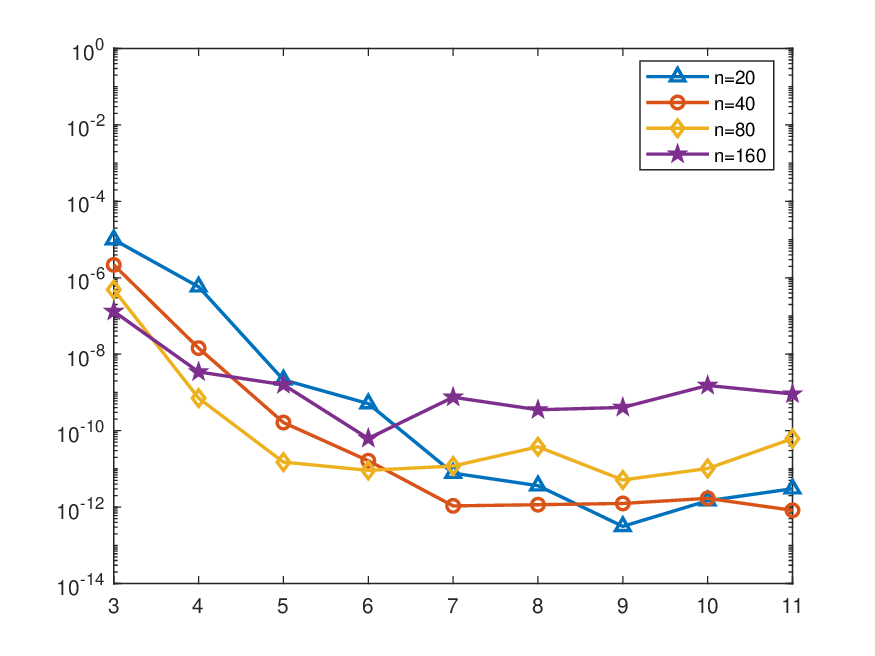}\\[-0.5ex] \scriptsize{$\omega=1$} 
		
	\end{minipage}
	\hfill
	\begin{minipage}{0.32\textwidth}
		\centering
			\includegraphics[width=\linewidth]{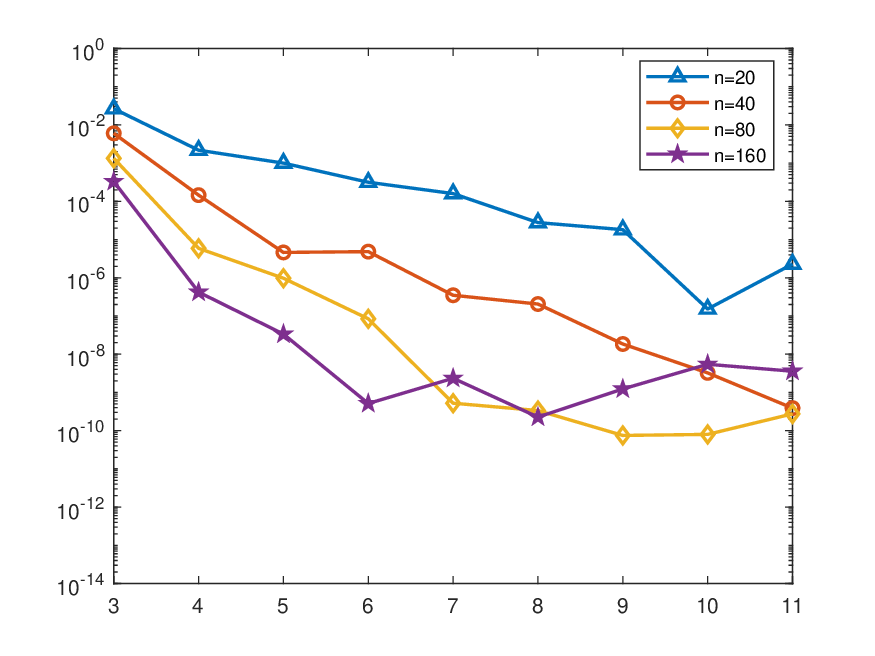}\\[-0.5ex] \scriptsize{$\omega=2\pi$}
	\end{minipage}
	\hfill
	\begin{minipage}{0.32\textwidth}
		\centering
			\includegraphics[width=\linewidth]{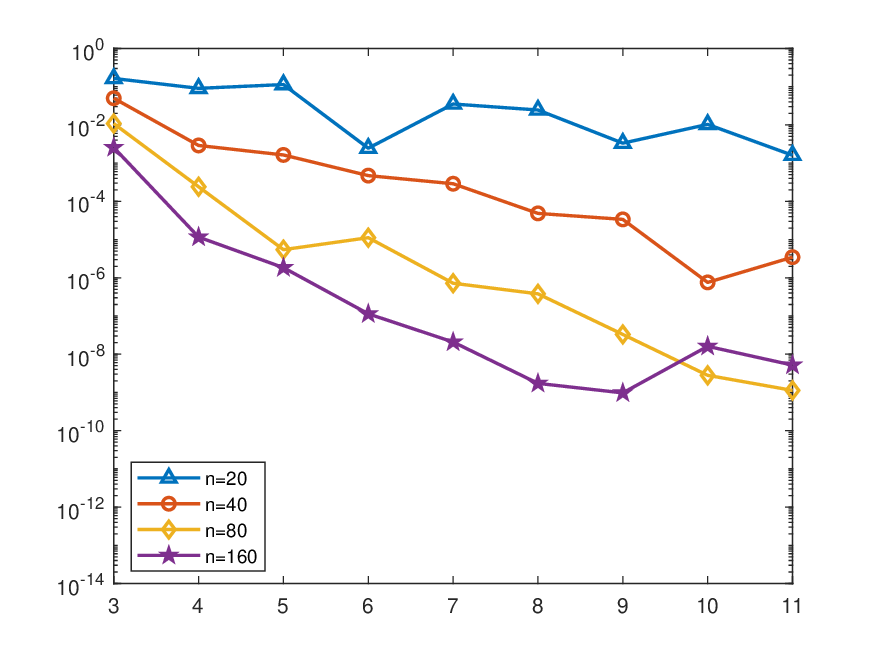}\\[-0.5ex] \scriptsize{$\omega=4\pi$}
		
	\end{minipage}
\end{center}
    \caption{Semilog plots of the mean absolute error for the Example \ref{Example_IVP3} by using equispaced sets of nodes.}
    \label{IVP_3.2}
\end{figure}

\begin{figure}
	\centering
	\parbox{.33\linewidth}{\centering
		\includegraphics[width=\linewidth]{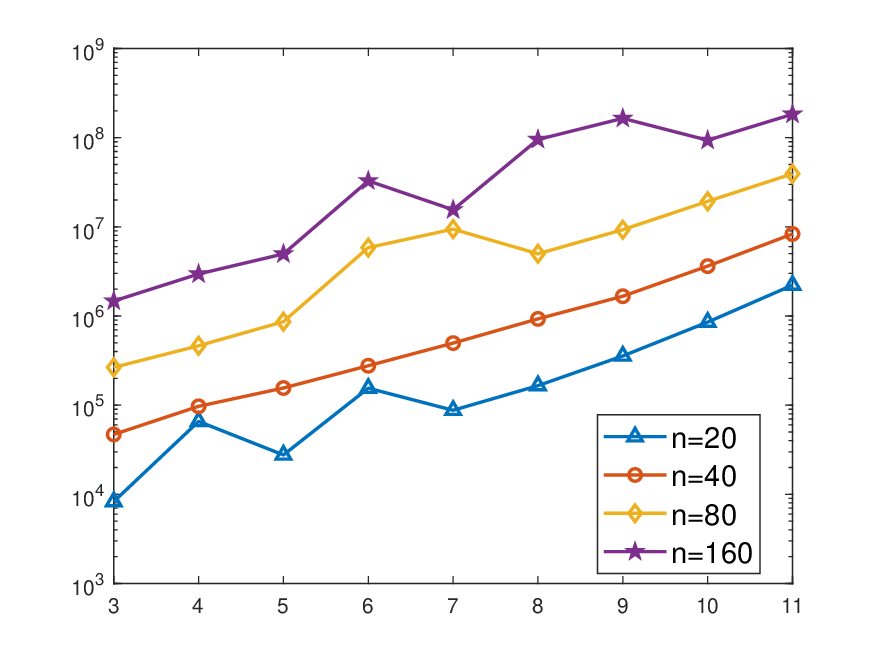}\\[-0.5ex] \scriptsize{$\omega=1$}}\hfill		
	\parbox{.33\linewidth}{\centering
		\includegraphics[width=\linewidth]{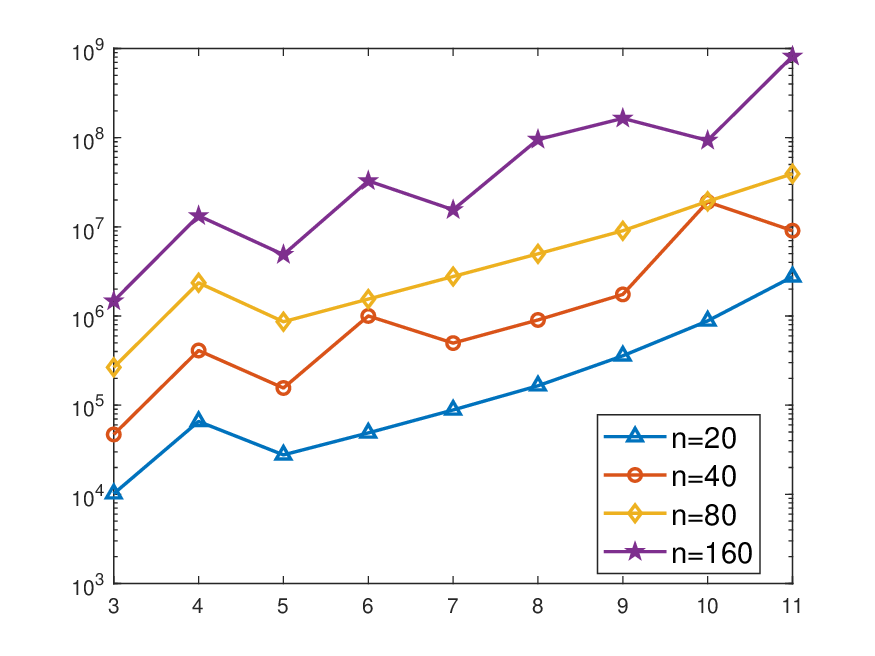}\\[-0.5ex] \scriptsize{$\omega=2\pi$}}\hfill
	\parbox{.33\linewidth}{\centering
		\includegraphics[width=\linewidth]{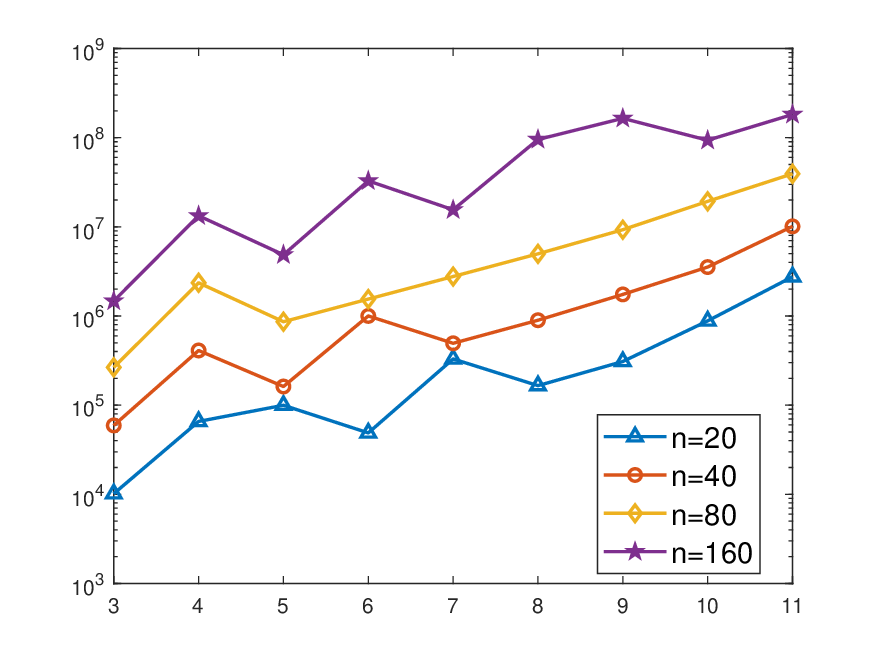}\\[-0.5ex] \scriptsize{$\omega=4\pi$} }
	\caption{Condition number of the matrix $\mathcal{A}$ in \eqref{matrix_IVP} for the Example \ref{Example_IVP3} by using equispaced sets of nodes. }
	\label{fig:cond_IVP_3_eq}
\end{figure}

\begin{figure}
	\centering
	\parbox{.32\linewidth}{\centering
	\includegraphics[width=1\linewidth]{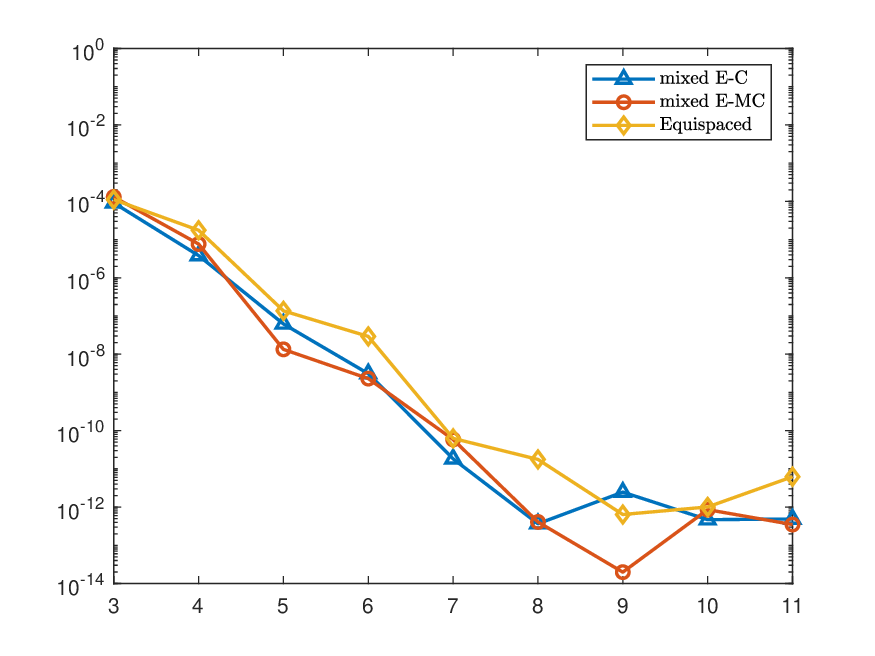}\\[-0.5ex] \scriptsize{$\omega=1$,$n_e=3$}}
	\hfill
	\parbox{.32\linewidth}{\centering
	\includegraphics[width=1\linewidth]{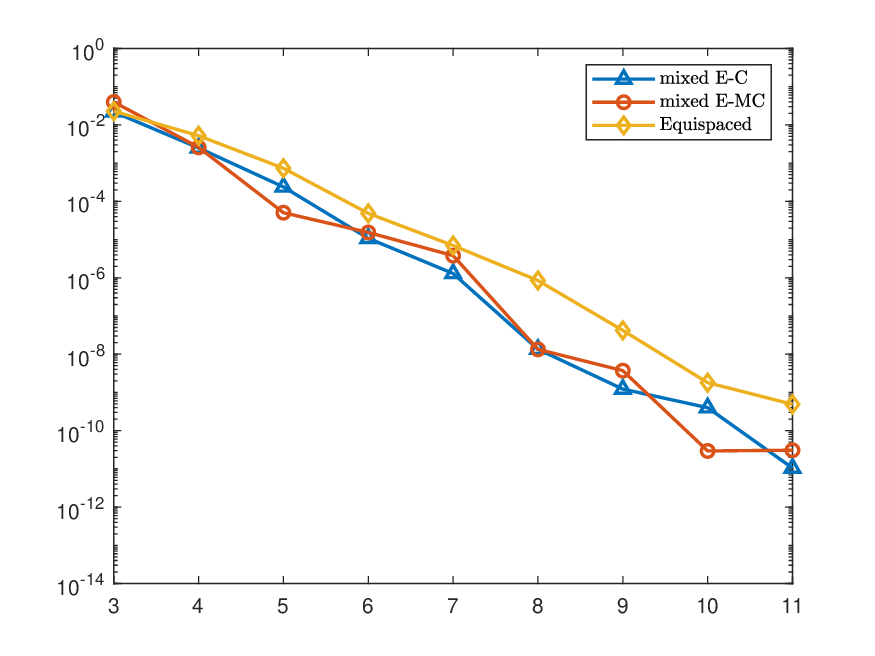}\\[-0.5ex]\scriptsize{$\omega=2\pi$,$n_e=5$}}
	\hfill
	\parbox{.32\linewidth}{\centering
	\includegraphics[width=1\linewidth]{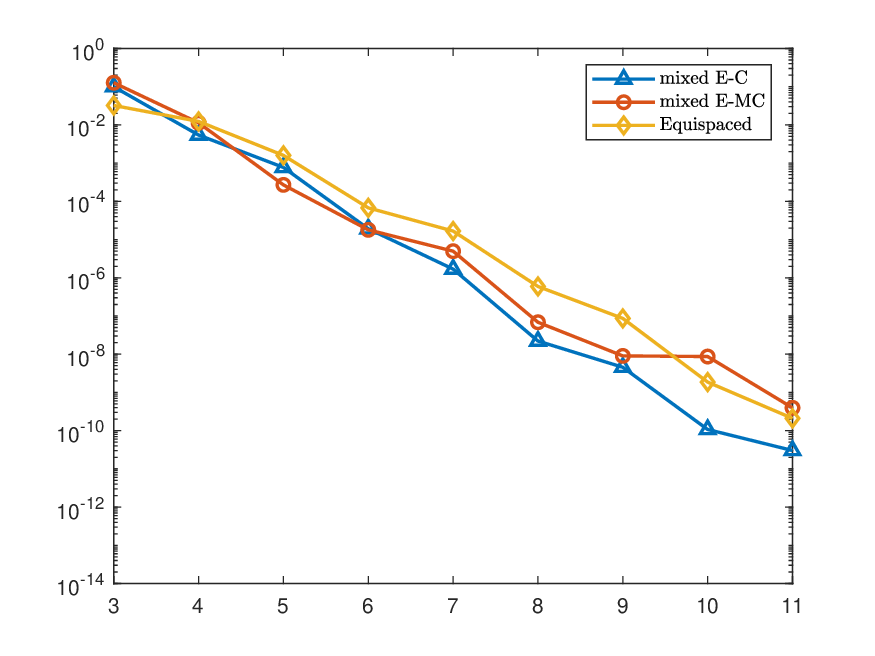}\\[-0.5ex] \scriptsize{$\omega=4\pi$,$n_e=9$}}
	 
	\caption{Semilog plots of the mean absolute error, for the Example \ref{Example_IVP3} by using the different sets of nodes.}
	\label{IVP_3.5}
\end{figure}

\begin{figure}
	\centering
	\parbox{.33\linewidth}{\centering
		\includegraphics[width=\linewidth]{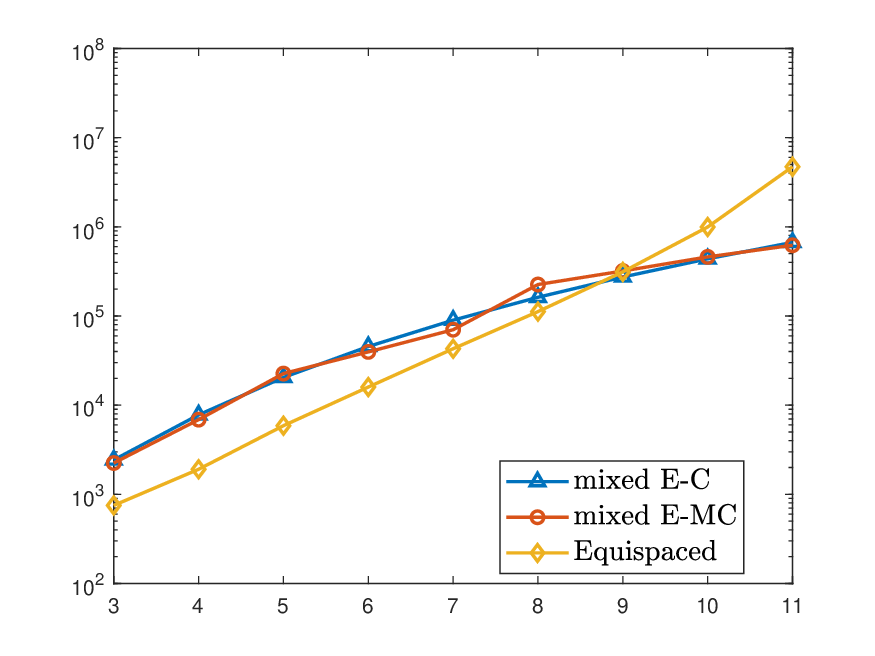}\\[-0.5ex] \scriptsize{$\omega=1,n_e=3$}}\hfill		
	\parbox{.33\linewidth}{\centering
		\includegraphics[width=\linewidth]{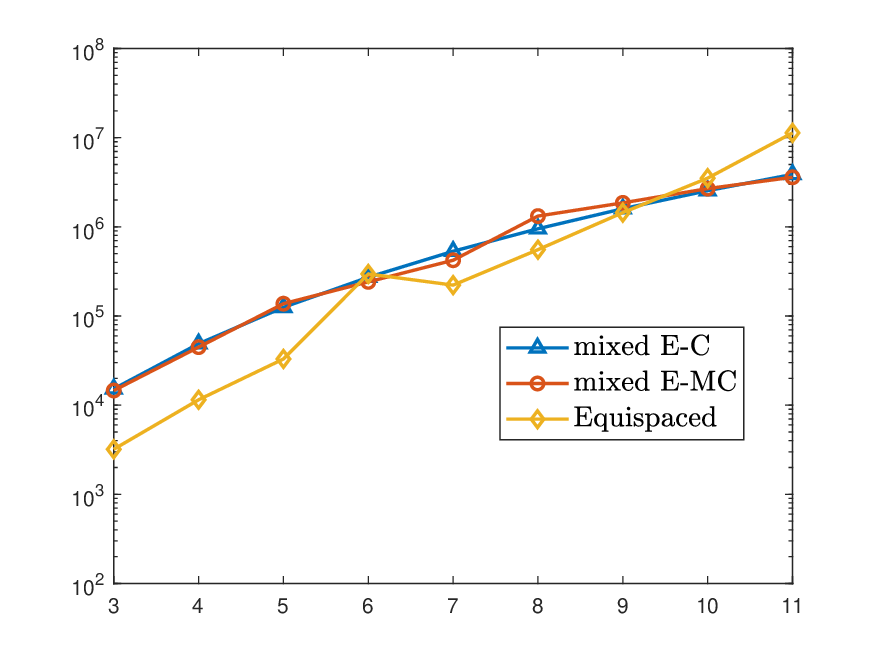}\\[-0.5ex] \scriptsize{$\omega=2\pi,n_e=5$}}\hfill
	\parbox{.33\linewidth}{\centering
		\includegraphics[width=\linewidth]{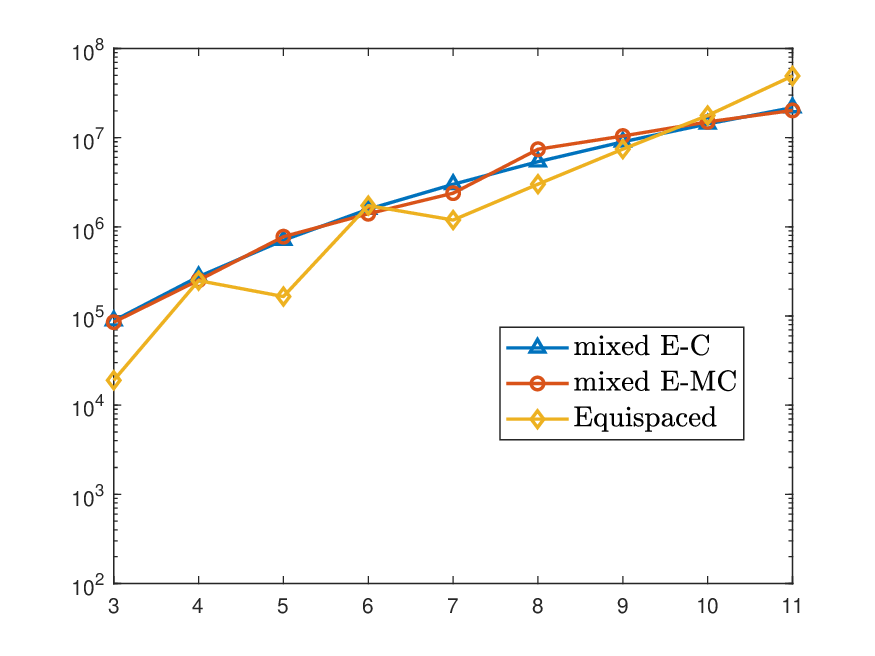}\\[-0.5ex] \scriptsize{$\omega=4\pi,n_e=9$} }
	\caption{Condition number of the matrix $\mathcal{A}$ in \eqref{matrix_IVP} for the Example \ref{Example_IVP3} y using the different sets of nodes.}
	\label{fig:cond_IVP_3}
\end{figure}

As in the case of the BVPs of Examples \ref{example_BVP_4}-\ref{example_BVP_6}, using sets of equispaced nodes results in a lower approximation error for lower-degree polynomials. For higher-degree polynomials, the approximation error remains comparable across all sets of nodes.

\begin{acknowledgements}
 % Acknowledgments are not compulsory. Where included they should be brief.
 % Grant or contribution numbers may be acknowledged, or help by colleagues. Example:
 This research has been achieved as part of RITA “Research ITalian network on Approximation”, as part of the UMI group “Teoria dell’Approssimazione e Applicazioni” and was supported by INDAM-GNCS project 2025. The first two authors are members of the INdAM Research group GNCS.
 \end{acknowledgements}

 \section*{\small
 Conflict of interest} %%%%%%%%%%%%%%%%%%%%

 {\small
 The authors declare that they have no conflict of interest.}

%%%%%%%%%%%%%%%%%% REFERENCES: %%%%%%%%%%%%%%%%%%%%%%%%%%%%%%%%%%%%%%%%%%%%
%% BibTeX users: please use \bibliographystyle{spmpsci} %% for math. and phys. sci.
%% Non-BibTeX users: please use the model as below !!! %%

%%%% for FCAA - pls. include directly the Refs items here ! %%%
%%%% following STRICTLY the models below %%%%%
%%%% and ARRANGE the items in ALPHABETIC ORDER for authors' family names !!!
\bibliographystyle{spmpsci}
\bibliography{bibliografia}

\begin{thebibliography}{10}
\providecommand{\url}[1]{{#1}}
\providecommand{\urlprefix}{URL }
\expandafter\ifx\csname urlstyle\endcsname\relax
  \providecommand{\doi}[1]{DOI~\discretionary{}{}{}#1}\else
  \providecommand{\doi}{DOI~\discretionary{}{}{}\begingroup
  \urlstyle{rm}\Url}\fi

\bibitem{Fractional_derivative_BTE}
ur~Rehman, M., Khan, R.A.: A numerical method for solving boundary value
  problems for fractional differential equations.
\newblock Applied Mathematical Modelling \textbf{36}(3), 894--907 (2012)

\bibitem{math7050407}
Garrappa, R., Kaslik, E., Popolizio, M.: Evaluation of {F}ractional {I}ntegrals
  and {D}erivatives of {E}lementary {F}unctions: {O}verview and {T}utorial.
\newblock Mathematics \textbf{7}(5) (2019)

\bibitem{Fractional_derivative_book}
Li, X., Chen, W., Sun, H.: Fractional {D}erivative {M}odeling in {M}echanics
  and {E}ngineering.
\newblock Springer Nature Singapore (2022)

\bibitem{esempi_numerici}
Pulido, M.A.P., Sousa, J.V.C., de~Oliveira, E.C.: New discretization of
  {$\psi$-C}aputo fractional derivative and applications.
\newblock Mathematics and Computer in Simulation \textbf{221}(C), 135–158
  (2024)

\bibitem{Application_BTE}
De~Bonis, M.C., Occorsio, D.: A {G}lobal {M}ethod for {A}pproximating {C}aputo
  {F}ractional {D}erivatives-{A}n {A}pplication to the {B}agley-{T}orvik
  {E}quation.
\newblock Axioms \textbf{13}(11) (2024)

\bibitem{article}
Mahmudov, N., Huseynov, I., Aliyev, F., Aliyev, N.: Analytical approach to a
  class of {B}agley-{T}orvik equations.
\newblock TWMS Journal of Pure \& Applied Mathematics pp. 238--258 (2020)

\bibitem{article2}
Garrappa, R.: Numerical {S}olution of {F}ractional {D}ifferential {E}quations:
  {A} {S}urvey and a {S}oftware {T}utorial.
\newblock Mathematics \textbf{6}, 16 (2018)

\bibitem{article3}
Sun, Y., Zeng, Z., Song, J.: Existence and {U}niqueness for the {B}oundary
  {V}alue {P}roblems of {N}onlinear {F}ractional {D}ifferential {E}quation.
\newblock Applied Mathematics \textbf{08}, 312--323 (2017)

\bibitem{Fractional_derivative_legendre}
Jafari, H., Yousefi, S., Firoozjaee, M., Momani, S., Khalique, C.: Application
  of {L}egendre wavelets for solving fractional differential equations.
\newblock Computers and Mathematics with Applications \textbf{62}(3),
  1038--1045 (2011)

\bibitem{entropy}
Zafar, A.A., Kudra, G., Awrejcewicz, J.: An {I}nvestigation of {F}ractional
  {B}agley–{T}orvik {E}quation.
\newblock Entropy \textbf{22}(1) (2020)

\bibitem{article4}
Diethelm, K., Ford, N.: Numerical {S}olution of the {B}agley-{T}orvik equation.
\newblock BIT Numerical Mathematics \textbf{42}, 490--507 (2002)

\bibitem{DEMIRCI20122754}
Demirci, E., Ozalp, N.: A method for solving differential equations of
  fractional order.
\newblock Journal of Computational and Applied Mathematics \textbf{236}(11),
  2754--2762 (2012)

\bibitem{DellAccio2016}
Dell'Accio, F., Di~Tommaso, F., Hormann, K.: Multinode rational operators for
  univariate interpolation.
\newblock In: Y.D. Sergeyev, D.E. Kvasov, F.~Dell'Accio, M.S. Mukhametzhanov
  (eds.) Numerical Computations: Theory and Algorithms (NUMTA-2016), \emph{AIP
  Conference Proceedings}, vol. 1776, pp. 070010:1--4. AIP Publishing, Pizzo
  Calabro, Italy (2016)

\bibitem{DellAccio:2018}
Dell’Accio, F., {Di Tommaso}, F., Hormann, K.: Reconstruction of a function
  from {H}ermite-{B}irkhoff data.
\newblock Applied Mathematics and Computation \textbf{318}, 51--69 (2018)

\bibitem{BagleyTorvik1984}
Torvik, P.J., Bagley, R.L.: On the {A}ppearance of the {F}ractional
  {D}erivative in the {B}ehavior of {R}eal {M}aterials.
\newblock Journal of Applied Mechanics \textbf{51}(2), 294--298 (1984)

\bibitem{Gautschi}
Gautschi, W.: Numerical analysis: an introduction.
\newblock Birkhauser Boston Inc., USA (1997)

\bibitem{Bagley1983FractionalCD}
Bagley, R.L., Torvik, P.J.: Fractional calculus-{A} different approach to the
  analysis of viscoelastically damped structures.
\newblock AIAA Journal  (1983)

\bibitem{DEMARCHI2015}
{De Marchi}, S., Dell’Accio, F., Mazza, M.: On the constrained
  mock-{C}hebyshev least-squares.
\newblock Journal of Computational and Applied Mathematics \textbf{280},
  94--109 (2015)

\bibitem{DELLACCIO2022}
Dell’Accio, F., {Di Tommaso}, F., Nudo, F.: Generalizations of the
  constrained mock-chebyshev least squares in two variables: Tensor product vs
  total degree polynomial interpolation.
\newblock Applied Mathematics Letters \textbf{125}, 107732 (2022)

\bibitem{oldham2006fractional}
Oldham, K., Spanier, J.: The {F}ractional {C}alculus: {T}heory and
  {A}pplications of {D}ifferentiation and {I}ntegration to {A}rbitrary {O}rder.
\newblock Academic Press (1974)

\bibitem{some_fractional_derivatives}
Sikora, B.: Remarks on the {C}aputo fractional derivative.
\newblock Matematyka I Informatyka Na Uczelniach Technicznych \textbf{5},
  78--84 (2023)

\bibitem{BTE_Bessel}
\c{S}. Y\"{u}zba\c{s}\i: Numerical solution of the {B}agley–{T}orvik equation
  by the {B}essel collocation method.
\newblock Mathematical Methods in the Applied Sciences \textbf{36} (2013)

\end{thebibliography}

\bigskip  %%%%%%%%%%%%%%%%%%%%%%%%%%%%%%%%%

\small %%%
\noindent
{\bf Publisher's Note}
Springer Nature remains neutral with regard to jurisdictional claims in published maps and institutional affiliations.

\end{document}